\def\@tocline#1#2#3#4#5#6#7{\relax
  \ifnum #1>\c@tocdepth 
  \else
    \par \addpenalty\@secpenalty\addvspace{#2}%
    \begingroup \hyphenpenalty\@M
    \@ifempty{#4}{%
      \@tempdima\csname r@tocindent\number#1\endcsname\relax
    }{%
      \@tempdima#4\relax
    }%
    \parindent\z@ \leftskip#3\relax \advance\leftskip\@tempdima\relax
    \rightskip\@pnumwidth plus4em \parfillskip-\@pnumwidth
    #5\leavevmode\hskip-\@tempdima
      \ifcase #1
      \or\or \hskip 2em \or \hskip 2em \else \hskip 3em \fi%
      #6\nobreak\relax
    \dotfill\hbox to\@pnumwidth{\@tocpagenum{#7}}\par
    \nobreak
    \endgroup
  \fi}
\numberwithin{equation}{section}
\newtheorem{thm}[subsection]{Theorem}
\newtheorem{lem}[subsection]{Lemma}
\newtheorem{prop}[subsection]{Proposition}
\theoremstyle{definition}
\newtheorem{df}[subsection]{Definition}
\newtheorem{rmk}[subsection]{Remark}
\newtheorem{const}[subsection]{Construction}
\newtheorem{quest}[subsection]{Question}
\newcommand{\bE}{\mathbf{E}}
\newcommand{\bM}{\mathbf{M}}
\newcommand{\bW}{\mathbf{W}}
\newcommand{\A}{\mathbb{A}}
\newcommand{\E}{\mathbb{E}}
\newcommand{\G}{\mathbb{G}}
\renewcommand{\L}{\mathbb{L}}
\renewcommand{\P}{\mathbb{P}}
\newcommand{\Q}{\mathbb{Q}}
\newcommand{\Sph}{\mathbb{S}}
\newcommand{\Z}{\mathbb{Z}}
\newcommand{\cC}{\mathcal{C}}
\newcommand{\cD}{\mathcal{D}}
\newcommand{\cF}{\mathcal{F}}
\newcommand{\cG}{\mathcal{G}}
\newcommand{\cH}{\mathcal{H}}
\newcommand{\cM}{\mathcal{M}}
\newcommand{\cO}{\mathcal{O}}
\newcommand{\rB}{\mathrm{B}}
\newcommand{\rD}{\mathrm{D}}
\newcommand{\rL}{\mathrm{L}}
\newcommand{\rN}{\mathrm{N}}
\newcommand{\kX}{\mathfrak{X}}
\newcommand{\et}{\mathrm{\acute{e}t}}
\DeclareMathOperator{\Hom}{Hom}
\DeclareMathOperator{\Spec}{Spec}
\DeclareMathOperator{\Spf}{Spf}
\DeclareMathOperator{\chara}{char}
\newcommand{\colim}{\mathop\mathrm{colim}}
\newcommand{\id}{\mathrm{id}}
\newcommand{\ul}{\underline}
\newcommand{\lSm}{\mathrm{lSm}}
\newcommand{\lSch}{\mathrm{lSch}}
\newcommand{\Sm}{\mathrm{Sm}}
\newcommand{\SmlSm}{\mathrm{SmlSm}}
\newcommand{\Fil}{\mathrm{Fil}}
\newcommand{\fil}{\mathrm{f}}
\newcommand{\gr}{\mathrm{gr}}
\DeclareMathOperator{\Fun}{Fun}
\DeclareMathOperator{\fib}{fib}
\DeclareMathOperator{\cofib}{cofib}
\DeclareMathOperator{\im}{im}
\newcommand{\DF}{\mathrm{DF}}
\newcommand{\lQSyn}{\mathrm{lQSyn}}
\newcommand{\FlQSyn}{\mathrm{FlQSyn}}
\newcommand{\lQRSPerfd}{\mathrm{lQRSPerfd}}
\newcommand{\Poly}{\mathrm{Poly}}
\newcommand{\Sh}{\mathrm{Sh}}
\newcommand{\Sp}{\mathrm{Sp}}
\newcommand{\Mod}{\mathrm{Mod}}
\DeclareMathOperator{\map}{map}
\newcommand{\eff}{\mathrm{eff}}
\newcommand{\veff}{\mathrm{veff}}
\newcommand{\logSH}{\mathrm{logSH}}
\newcommand{\logDA}{\mathrm{logDA}}
\newcommand{\SH}{\mathrm{SH}}
\newcommand{\Kth}{\mathrm{K}}
\newcommand{\HH}{\mathrm{HH}}
\newcommand{\HC}{\mathrm{HC}}
\newcommand{\HP}{\mathrm{HP}}
\newcommand{\THH}{\mathrm{THH}}
\newcommand{\TC}{\mathrm{TC}}
\newcommand{\TP}{\mathrm{TP}}
\newcommand{\BMS}{\mathrm{BMS}}
\newcommand{\HKR}{\mathrm{HKR}}
\newcommand{\Tr}{\mathrm{Tr}}
\newcommand{\Pic}{\mathrm{Pic}}
\newcommand{\Hod}{\mathrm{Hod}}
\newcommand{\dR}{\mathrm{dR}}
\newcommand{\Tot}{\mathrm{Tot}}
\newcommand{\crys}{\mathrm{crys}}
\newcommand{\Zar}{\mathrm{Zar}}
\newcommand{\seta}{\mathrm{s\acute{e}t}}
\newcommand{\sproet}{\mathrm{spro\acute{e}t}}
\newcommand{\sNis}{\mathrm{sNis}}
\newcommand{\syn}{\mathrm{syn}}
\newcommand{\letale}{\mathrm{l\acute{e}t}}
\newcommand{\ket}{\mathrm{k\acute{e}t}}
\newcommand{\setale}{\mathrm{s\acute{e}t}}
\newcommand{\mot}{\mathrm{mot}}
\newcommand{\bHH}{\mathbf{HH}}
\newcommand{\bHC}{\mathbf{HC}}
\newcommand{\bHP}{\mathbf{HP}}
\newcommand{\bTHH}{\mathbf{THH}}
\newcommand{\bTC}{\mathbf{TC}}
\newcommand{\bTP}{\mathbf{TP}}
\newcommand{\slice}{\mathrm{s}}
\newcommand{\MZ}{\mathbf{M}\mathbb{Z}}
\newcommand{\KGL}{\mathbf{KGL}}
\newcommand{\bOmega}{\mathbf{\Omega}}
\newcommand{\MS}{\mathrm{MS}}
\DeclareSymbolFontAlphabet{\mathbb}{AMSb} 
\DeclareSymbolFontAlphabet{\mathbbl}{bbold} 
\newcommand{\cPrism}{\widehat{\mathlarger{\mathbbl{\Delta}}}}
\newcommand{\bPrism}{\widehat{\mathbf{\Delta}}}
\newcommand{\can}{\mathrm{can}}
\newcommand{\conj}{\mathrm{conj}}
\title{On the logarithmic slice filtration}
\author{Federico Binda}
\address{Department of Mathematics ``F. Enriques'', University of Milan, Italy}
\email[F. Binda]{federico.binda@unimi.it}
\author{Doosung Park}
\address{Department of Mathematics and Informatics, University of Wuppertal, Germany}
\email{dpark@uni-wuppertal.de}
\author{Paul Arne {\O}stv{\ae}r}
\address{Department of Mathematics ``F. Enriques'', University of Milan, Italy \&
Department of Mathematics, University of Oslo, Norway}
\email{paul.oestvaer@unimi.it \& paularne@math.uio.no}
\date{\today}
\begin{document}

\begin{abstract}
We consider slice filtrations in logarithmic motivic homotopy theory. 
Our main results establish conjectured compatibilities with the Beilinson, BMS, 
and HKR filtrations on (topological, log) Hochschild homology and related invariants.
In the case of perfect fields admitting resolution of singularities, 
we show that the slice filtration realizes the BMS filtration on the $p$-completed topological cyclic homology. 
Furthermore,
the motivic trace map is compatible with the slice and BMS filtrations, 
yielding a natural morphism from the motivic slice spectral sequence to the BMS spectral sequence. 
Finally, 
we consider the Kummer \'etale hypersheafification of logarithmic $K$-theory and show that its very 
effective slices compute Lichtenbaum \'etale motivic cohomology.
\end{abstract}

\maketitle

\section{Introduction}

Logarithmic motives provide a framework for studying non-$\A^1$-invariant cohomology theories 
in arithmetic geometry 
\cite{BLMP}, 
\cite{BLPO}, 
\cite{BLPO2}, 
\cite{Binda_Merici}, 
\cite{logDMcras}, \cite{BPO}, \cite{BPO2}, 
\cite{Merici}, 
\cite{SaitoRSClog}.
This paper introduces logarithmic analogs of slice filtrations in motivic homotopy theory 
defined by Voevodsky \cite{MR1977582} and Spitzweck-{\O}stv{\ae}r \cite{SO:twistedK}. 
The slice perspective produces explicit calculations, 
see \cite{zbMATH07303324} for a survey, 
and remains a powerful tool in homotopy theory following Hill-Hopkins-Ravenel's 
solution of the Kervaire invariant one problem \cite{zbMATH06605831}.
Examples of applications of motivic slices include 
a proof of the Milnor conjecture on quadratic forms in \cite{zbMATH06578603} 
and calculations of universal motivic invariants in \cite{zbMATH07003144}, 
\cite{röndigs2021second}.
One of the main purposes of this paper is to compare
the logarithmic slice filtration with the Bhatt-Morrow-Scholze filtration on 
topological Hochschild homology and refinements \cite{BMS19}.

Assuming $k$ is a perfect field of characteristic $p$ and $A$ is a smooth $k$-algebra, 
Bhatt, Morrow, and Scholze \cite[\S 1.4]{BMS19} expected that their filtrations on topological Hochschild homology 
$\THH(A;\Z_p)$ (and the related theories $\TC^-(A;\Z_p)$, $\TP(A;\Z_p)$, and $\TC(A;\Z_p)$) 
afford a precise relation with filtrations on algebraic $K$-theory $\Kth(A)$, 
see \cite{FS}, \cite{MR2365658}, 
via the trace maps
\[
\Kth(A) \to \THH(A;\Z_p),\; \TC^-(A;\Z_p),\; \TP(A;\Z_p),\; \TC(A;\Z_p).
\]
In the literature, these filtrations are often called ``motivic filtrations''. 
Mathew verified this expectation in \cite[p.\ 4]{zbMATH07729746} by showing that the filtrations are compatible using Postnikov towers in (pro-)Nisnevich and (pro-)\'etale topologies.
Furthermore, Mathew raised a deeper question regarding the existence of a unified construction that could realize both filtrations.

This paper aims to explore the motivic properties of these filtrations using logarithmic motivic homotopy theory developed in \cite{BPO2} and provide a first positive answer in this direction. 
A brief recapitulation of Voevodsky's slice filtration in $\A^1$-homotopy theory will help 
clarify our approach.
In algebraic topology, a standard method to understand a spectrum $E$  is to study its Postnikov tower, i.e., \ how it is built out of topological Eilenberg-MacLane spectra. Sending $E$ to its $n-1$-connected cover $E^{(n)}$ defines a functor from $\SH$ to the full subcategory $\Sigma^n \SH_{\geq 0}$ of $n-1$-connected spectra, that is right adjoint to the obvious inclusion. 
Replacing the category $\Sigma^n \SH_{\geq 0}$ with the category $\Sigma^{2n,n}\SH(S)^{\rm eff}$ measuring 
$\G_m$-effectivity, Voevodsky defined a motivic analog of the Postnikov tower, known as the slice tower 
\[
{\fil}_{n+1} E  \to {\fil}_{n} E \to {\fil}_{ n-1 } E \to \cdots \to E
\]
for any motivic spectrum $E$. When $E= \KGL$ is the spectrum representing algebraic $K$-theory, 
the induced spectral sequence is the motivic Atiyah-Hirzebruch spectral sequence of Levine-Voevodsky 
\cite{MR2365658},
\cite{zbMATH01803815}.

Since (topological) Hochschild homology and its variants are not $\A^1$-homotopy invariant (in fact, the $\A^1$-localization of $\THH$ is trivial, see, e.g., \cite{EEnonA1}), there is no motivic spectrum in $\SH(S)$ representing them and thus Voevodsky's slice machinery is inapplicable. In \cite{BPO2}, we developed an extension of $\SH(S)$ using the language of log schemes and proved that there are indeed $\P^1$-spectra representing $\THH$ and variants. The HKR filtration, the Beilinson filtration, and the Bhatt-Morrow-Scholze filtration on $\HH$, $\HC^{-}$, $\THH$, $\TC^-$ and so on are all compatible with the bounding maps defining the motivic spectra $\bHH$, $\bHC^{-}$, $\bTHH$, $\bTC^{-}$ and relatives, giving rise to filtered $\P^1$-motivic spectra. See Propositions \ref{HKR.6} and \ref{BMS.2}. In \cite[\S 6.5.6]{BPO2} we have also constructed a $\P^1$-motivic spectrum $\KGL$ (denoted $\mathbf{logKGL}$ in \cite{BPO2}), representing algebraic $K$-theory, equipped with a motivic lift of the (logarithmic) cyclotomic trace map when $S=k$ is a perfect field admitting resolution of singularities. See \cite[Corollary 8.6.2]{BPO2}. 

In this work, we propose an analog of Voevodsky's slice filtration within the context of $\logSH(S)$ and its linearized variant $\logDA(S)$.
We define the effective category $\logSH(S)^{\rm eff}$ as the localizing subcategory of $\logSH(S)$ generated by $\P^1$-suspensions of $(X, \partial X)\in \SmlSm/S$, where $X\in \Sm/S$ and $\partial X$ is a strict normal crossing divisor on $X$. Similarly, we define the coeffective category $\logSH(S)^{\rm coeff}$ as its right orthogonal in the obvious sense. 
Through (de)suspension, we obtain the $n$-effective and the $m$-coeffective categories for every $n,m\in \Z$. Following Spitzweck-{\O}stv{\ae}r \cite{SO:twistedK}, we will consider the very effective version of the slice filtration in $\logSH(S)$. Rather than measuring $\G_m$-effectivity, the very effective slice filtration measures $\P^1$-effectivity and has strong convergence properties. Our main results are the following.
Below,
when $\Fil_\bullet \cF$ is a filtration on $\cF$,
we set $\Fil^n \cF:=\cofib(\Fil_{n+1}\cF\to \cF)$ for $n\in \Z$.

\begin{thm}[See Theorem \ref{HKR.3}]\label{thm:introHC}Let $R$ be any ring, and let $S\to \Spec(R)$ be a smooth $R$-scheme. 
For $n\in \Z$, the complexes
\[
\Fil_{\HKR}^n\bHH(-/R),
\;
\Fil_{\rB}^n\bHC^-(-/R),
\;
\Fil_{\rB}^n\bHP(-/R)
\]
are $n$-co-very effective, that is, they belong to the subcategory $\logDA(S)_{\leq n}^{\veff}$ of Construction \ref{constr.veff}. In particular, there are canonical morphisms 
\begin{align*}
&\tilde{\fil}_n \bHH(-/R) \simeq \Sigma^{2n,n}\tilde{\fil}_0  \bHH(-/R)  \to \Fil_n^{\HKR} \bHH(-/R),
\end{align*}
and similarly for $\bHC^-$ and $\bHP$, where $\tilde{\fil}_n$ denotes the $n$th very effective slice functor in $\logDA(S)$. 
\end{thm}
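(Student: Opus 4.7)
The plan is to reduce the statement to a claim about the associated graded pieces of each filtration. By definition $\Fil^n \cF = \cofib(\Fil_{n+1}\cF \to \cF)$ sits in a finite tower whose successive cofibers are the graded pieces $\gr_i \cF$ for $0 \le i \le n$. Since the subcategory $\logDA(S)^{\veff}_{\leq n}$ of $n$-co-very effective objects is closed under cofibers, extensions, and negative shifts in the very effective $t$-structure (this follows from Construction \ref{constr.veff}), it suffices to show that each $\gr_i \cF$ lies in $\logDA(S)^{\veff}_{\leq i}$ for $i \le n$. I would organise the proof as an induction on $n$ using the cofiber sequence $\Fil^{n-1}\cF \to \Fil^n \cF \to \gr_n \cF$, so the real content is concentrated in a statement about each single graded piece.

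The next step is to identify the graded pieces explicitly in a form that exposes their $\P^1$-effectivity. For $\bHH(-/R)$ the logarithmic HKR theorem (as used already in Proposition \ref{HKR.6}) identifies $\gr_i^{\HKR}\bHH$ with the shifted log K\"ahler differentials $\bOmega^i_{-/R}[i]$. For $\bHC^-$ and $\bHP$, the corresponding Beilinson graded pieces are controlled by truncated respectively two-periodic log de Rham complexes, of the form $\bOmega^{\geq i}_{-/R}[2i]$ and $\bOmega^{\bullet}_{-/R}[2i]$; again these identifications are recorded by Proposition \ref{BMS.2}. Thus the claim is reduced to showing that, in the linearized category $\logDA(S)$, these shifted (truncated) log de Rham sheaves are $i$-co-very effective.

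The core technical point is then to verify the effectivity bound for each graded object. For $\gr_i^{\HKR}\bHH$ one needs $\bOmega^i_{-/R}[i]\in\logDA(S)^{\veff}_{\leq i}$; this is a shifted version of the observation that $\bOmega^i_{-/R}$ is motivic of pure weight $i$, concretely realised in $\logDA(S)$ as a $\Sigma^{2i,i}$-twist of a connective object (the log motivic cohomology sheaf $\Z(i)$ of \cite{BPO2}). The Beilinson case is similar but uses a filtration on the de Rham complex $\bOmega^{\geq i}$ whose graded pieces $\bOmega^j[2i]$ with $j\geq i$ each satisfy the bound because $\bOmega^j[j]$ is $j$-co-very effective and the additional cohomological shift $[2i-j]$ (with $2i-j \le i$) only moves further down in the very effective $t$-structure.

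The main obstacle is this last step: producing the clean very effective description of $\bOmega^i_{-/R}$ inside $\logDA(S)$ and checking, against the generators of $\logDA(S)^{\veff}$ from Construction \ref{constr.veff}, that the required mapping spaces into $(i{+}1)$-very effective objects vanish. Once the three graded pieces are placed, the existence of the canonical morphism $\tilde{\fil}_n\cF \to \Fil^n\cF$ in the second part of the theorem is formal: by the universal property of the very effective truncation as a left adjoint to the inclusion $\logDA(S)^{\veff}\hookrightarrow \logDA(S)$, together with the identification $\tilde{\fil}_n \simeq \Sigma^{2n,n}\tilde{\fil}_0 \Sigma^{-2n,-n}$ and the Tate periodicity enjoyed by $\bHH$, $\bHC^-$, $\bHP$, the first part yields the desired factorisation automatically.
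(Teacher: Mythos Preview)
Your overall strategy of passing to graded pieces is natural, but several steps are genuinely wrong, and you miss the key simplification that makes the paper's argument work.

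\textbf{The finite tower claim fails.} For $\bHC^-$ and $\bHP$ the Beilinson filtration is not bounded below: $\gr_i^{\rB}\bHP\simeq \widehat{\L\bOmega}_{/R}[2i]$ is nonzero for \emph{every} $i\in\Z$, so $\Fil^n_{\rB}\bHP$ is built from infinitely many graded pieces $\gr_i$ with $i\leq n$, not just those with $0\leq i\leq n$. A colimit over infinitely many pieces does not automatically stay in $\logDA(S)^{\veff}_{\leq n}$ (which is closed under limits, not colimits); one has to invoke compactness of $\Sigma^\infty X_+$, which you do not mention.

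\textbf{The core effectivity step is wrong as written.} The assertion that $\bOmega^i_{-/R}$ is ``concretely realised as a $\Sigma^{2i,i}$-twist of a connective object (the log motivic cohomology sheaf $\Z(i)$)'' is false: Hodge cohomology and motivic cohomology are entirely different objects of $\logDA(S)$, and there is no such description. For the $\bHC^-$ case, your shift argument is also flawed: even granting $\bOmega^j[j]\in\logDA(S)^{\veff}_{\leq j}$, a nonpositive $S^1$-shift only keeps you in $(\leq j)$; it does not land you in the \emph{smaller} category $(\leq i)$ for $i\leq j$, which is what you need.

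\textbf{What the paper does instead.} The paper bypasses all of this by using the periodicity $\Fil_i^{\HKR}\bHH(1)[2]\simeq \Fil_{i+1}^{\HKR}\bHH$ (Proposition \ref{HKR.4}, and similarly for $\bHC^-$, $\bHP$) to reduce immediately to the single case $n=-1$. There one must only check $\Hom_{\logDA(S)}(\Sigma^\infty X_+,\gr_{-j})=0$ for $j>0$. Via Proposition \ref{HKR.5} this becomes the trivial vanishing $H_{\Zar}^{-j}(X,\Omega^{-j}_{X/R})=0$ and $H_{\Zar}^{-2j}(X,\Omega^{\geq 0}_{X/R})=0$, which hold simply because these are cohomology groups in negative degree. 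No ``pure weight'' or effectivity structure on $\bOmega^i$ is ever invoked. The ``in particular'' clause then follows formally from Lemma \ref{slice.2} (and note that $\tilde{\fil}_n$ is the \emph{right} adjoint to the inclusion of $\Sigma_{\P^1}^n\logDA(S)^{\veff}$, not the left).
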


\begin{thm}[See Theorem \ref{BMS.3}]\label{thm:introTC} Let $S$ be the spectrum of a $p$-adic quasi-syntomic ring $R$. 
For $n\in \Z$, the filtered spectra
\[
\Fil_{\BMS}^n\bTHH(-;\Z_p),
\;
\Fil_{\BMS}^n\bTC^-(-;\Z_p),
\;
\Fil_{\BMS}^n\bTP(-;\Z_p),
\;
\Fil_{\BMS}^n\bTC(-;\Z_p)
\]
are in $\logSH(S)_{\leq n}^{\veff}$. In particular, there are canonical morphisms
\[
\tilde{\fil}_n \bTHH(-;\Z_p) \simeq \Sigma^{2n,n} \tilde{\fil}_0 \bTHH(-;\Z_p) \to \Fil_n^{\BMS} \bTHH(-;\Z_p), 
\]
and similarly for $\bTC^{-}$, $\bTP$ and $\bTC$. 
\end{thm}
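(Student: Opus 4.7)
The strategy parallels that of Theorem \ref{thm:introHC}, now in the $p$-complete topological setting. By Proposition \ref{BMS.2}, the BMS filtrations on $\THH(-;\Z_p)$, $\TC^-(-;\Z_p)$, $\TP(-;\Z_p)$, $\TC(-;\Z_p)$ assemble into filtered objects of $\logSH(S)$, compatible with the bounding maps that define the log motivic spectra $\bTHH(-;\Z_p)$, $\bTC^-(-;\Z_p)$, $\bTP(-;\Z_p)$, $\bTC(-;\Z_p)$. The content of the theorem is thus to place each cofiber $\Fil^n_{\BMS} \cF$ inside $\logSH(S)_{\leq n}^{\veff}$ and to construct the comparison with the very effective slice tower.

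The first step is a d\'evissage. Since $\logSH(S)_{\leq n}^{\veff}$ is closed under extensions and small colimits, and since $\Fil^n_{\BMS} \cF = \cofib(\Fil_{n+1}^{\BMS} \cF \to \cF)$ is built from the graded pieces $\gr_m^{\BMS} \cF$ with $m \leq n$ via the cofiber sequences $\Fil_{m+1}^{\BMS} \cF \to \Fil_m^{\BMS} \cF \to \gr_m^{\BMS} \cF$, it suffices to show that $\gr_m^{\BMS} \cF$ lies in $\logSH(S)_{\leq m}^{\veff}$ for every $m \geq 0$. This mirrors the reduction used for Theorem \ref{thm:introHC}, with the sole difference that the filtration is now $p$-complete.

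The second step identifies the graded pieces. Under $p$-complete quasi-syntomic descent from \cite{BMS19}, the BMS graded pieces of $\THH(-;\Z_p)$, $\TC^-(-;\Z_p)$, $\TP(-;\Z_p)$ are shifts of the Hodge-type pieces of the Breuil--Kisin twisted Nygaard-filtered prismatic complex, while those of $\TC(-;\Z_p)$ are the $p$-adic syntomic complexes $\Z_p(m)$. Using the representability of these theories in $\logSH(S)$ from \cite{BPO2}, I plan to lift each $\gr_m^{\BMS} \cF$ to a log motivic spectrum of the form $\Sigma^{2m,m} \cG_m$ with $\cG_m \in \logSH(S)_{\leq 0}^{\veff}$. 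I expect to verify this connectivity bound by $p$-complete quasi-syntomic descent, reducing to quasi-regular semiperfectoid test rings where the relevant complexes are concentrated in a single cohomological degree and hence are trivially $\P^1$-connective.

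Finally, the canonical morphisms are obtained by composing the counit $\tilde{\fil}_n \cF \to \cF$ with the projection $\cF \to \Fil^n_{\BMS} \cF$, while the identification $\tilde{\fil}_n \cF \simeq \Sigma^{2n,n} \tilde{\fil}_0 \cF$ follows from the $(2,1)$-periodic structure of the graded pieces, exactly as in the HKR case. The principal obstacle lies in the second step: one must translate the $p$-complete prismatic and syntomic sheaves of \cite{BMS19} into genuine log motivic spectra and check that the Breuil--Kisin and Frobenius twists land precisely in motivic bidegree $(2m, m)$ with $\P^1$-connective remainder. The $\TC$ case, where Frobenius-fixed points enter the construction of $\Z_p(m)$, is the most delicate and will almost certainly require separate treatment from the $\THH$, $\TC^-$, $\TP$ cases.
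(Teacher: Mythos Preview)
Your overall strategy is sound and close to the paper's, but there are two genuine slips in the d\'evissage step that you should fix. First, the subcategory $\logSH(S)_{\leq n}^{\veff}$ is a right orthogonal and hence is closed under limits and extensions, but \emph{not} under arbitrary small colimits; what you actually need (and what the paper uses) is closure under filtered colimits, which follows from compactness of $\Sigma^\infty X_+$. Second, restricting to $m\geq 0$ is incorrect for $\bTC^-$ and $\bTP$: their BMS graded pieces are nonzero for all $m\in\Z$ (e.g.\ $\gr_m^\BMS\bTP\simeq \bPrism\{m\}[2m]$), so $\Fil^n_\BMS$ involves infinitely many $\gr_m$ with $m<0$. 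Your step two would handle this via the periodicity $\gr_m\simeq\Sigma^{2m,m}\cG_m$, but the reduction in step one as written is incomplete.

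The paper streamlines this by reversing the order: it first invokes the periodicity $\Fil^\BMS_i\cF(1)[2]\simeq\Fil^\BMS_{i+1}\cF$ (Proposition~\ref{BMS.5}) to reduce immediately to the single case $n=-1$, and only then argues about graded pieces. This makes $\bTHH$ and $\bTC$ trivial, since $\Fil^{-1}_\BMS\THH(-;\Z_p)=0$ by construction and $\Fil^{-1}_\BMS\TC(-;\Z_p)=0$ by a direct connectivity bound (Proposition~\ref{BMS.7}); only $\bTC^-$ and $\bTP$ require the d\'evissage into $\gr_{-j}$ for $j>0$ and the vanishing $H^{-2j}_{\sNis}(X,\cPrism\{-j\})=0$ via log quasi-syntomic descent. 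So your expectation is inverted: $\bTC$ is the easiest case, not the most delicate, and no separate treatment of Frobenius fixed points is needed. Finally, the identification $\tilde{\fil}_n\cF\simeq\Sigma^{2n,n}\tilde{\fil}_0\cF$ comes from Bott periodicity of $\cF$ itself, not from the graded pieces; and the comparison map is obtained as in Lemma~\ref{slice.2} by taking fibers of $\tilde{\fil}^i\cF\to\Fil^i\cF$, not just by composing with the projection.
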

Assume now that $S=\Spec(k)$ is a field of positive characteristic $p$, admitting resolution of singularities. Since topological cyclic homology and variants are, in any case, étale sheaves, we can consider an étale variant of the very effective slice filtration in the subcategory spanned by \emph{$p$-completed objects} in $\logSH(k)$. Our result is then the following. 

\begin{thm}[See Theorem \ref{ket.5}]
Let $k$ be a perfect field admitting resolution of singularities.
Then, the induced morphism
\[
\tilde{\fil}_i^\ket\bTC(-;\Z_p)
\to
\Fil_i^\BMS\bTC(-;\Z_p)
\]
is an equivalence in the $\infty$-category of $p$-complete Kummer \'etale motives $\logSH_\ket^\wedge(k)_p^\wedge$ for every integer $i$. Here, we denote by $\tilde{\fil}_i^\ket$ the $i$th very effective cover in $\logSH_\ket^\wedge(k)_p^\wedge$.
\end{thm}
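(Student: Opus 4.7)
The plan is to deduce the equivalence by reducing to a comparison on associated graded pieces, after verifying that both filtrations are complete in $\logSH_\ket^\wedge(k)_p^\wedge$. The BMS filtration on $\bTC(-;\Z_p)$ is complete after $p$-completion by \cite{BMS19}, and this completeness is preserved under Kummer \'etale hypersheafification since the hypersheafification functor is exact on the $p$-complete objects involved. For the very effective slice tower $\tilde{\fil}_\bullet^\ket \bTC(-;\Z_p)$, completeness should follow from the defining properties of the very effective subcategory in $\logSH_\ket^\wedge(k)_p^\wedge$ together with the convergence of the slice tower, checked on the generators of the effective subcategory. The canonical comparison morphism itself arises because, by Theorem \ref{thm:introTC}, the truncation $\Fil^{i-1}_\BMS \bTC(-;\Z_p)$ is $(i-1)$-co-very-effective, a property preserved under the localizations at hand, so the composition $\tilde{\fil}_i^\ket \bTC \to \bTC \to \Fil^{i-1}_\BMS \bTC$ vanishes by orthogonality and the map factors through $\Fil_i^\BMS \bTC$.

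Given completeness of both filtrations, the canonical morphism is an equivalence at every level $i$ if and only if it induces an equivalence on associated graded pieces $\tilde{s}_i^\ket \bTC(-;\Z_p) \xrightarrow{\sim} \gr_i^\BMS \bTC(-;\Z_p)$. By construction, $\gr_i^\BMS \bTC(-;\Z_p) \simeq \Z_p(i)[2i]$ is the shifted $p$-complete syntomic complex of \cite{BMS19}. For the left-hand side, the plan is to identify $\tilde{s}_i^\ket \bTC(-;\Z_p)$ with the Kummer \'etale $p$-complete log motivic cohomology in weight $i$, following the template established by the accompanying theorem on Kummer \'etale hypersheafified logarithmic $K$-theory mentioned in the abstract. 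Under the resolution of singularities assumption on $k$, this log motivic cohomology coincides with the syntomic complex $\Z_p(i)$, matching the BMS graded piece. The motivic cyclotomic trace of \cite[Corollary 8.6.2]{BPO2} plays a central role in transporting the slice identification from $\KGL$ to $\bTC$.

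The main obstacle will be the explicit computation of the very effective slices $\tilde{s}_i^\ket \bTC(-;\Z_p)$ in the Kummer \'etale $p$-complete log motivic category, which requires three non-trivially interacting ingredients: the log motivic structure of $\bTC$ established in \cite{BPO2}, the orientation inherited from the motivic cyclotomic trace, and the identification of Lichtenbaum \'etale log motivic cohomology with syntomic cohomology under the resolution of singularities hypothesis. A particular subtlety is whether the very effective cover in $\logSH_\ket^\wedge(k)_p^\wedge$ is computed by applying the Kummer \'etale hypersheafification and $p$-completion to the very effective cover in $\logSH(k)$; this compatibility between slice and localization functors lies at the technical heart of the argument.
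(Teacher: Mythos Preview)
Your strategy has a genuine gap: you propose to compute the very effective slices $\tilde{\slice}_i^\ket \bTC(-;\Z_p)$ \emph{independently} and then match them with the BMS graded pieces, but you give no viable mechanism for this computation. The suggestion to ``transport the slice identification from $\KGL$ to $\bTC$'' via the cyclotomic trace does not work as stated, since the trace map is not an equivalence and there is no general result saying that slices are preserved along ring maps of motivic spectra. The slices of $\KGL$ are known because $\KGL\simeq\omega^*\KGL$ and one can import Levine's computation from $\SH(k)$; no such external input exists for $\bTC$.

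The paper's argument avoids this problem entirely by reversing the logic. Rather than computing $\tilde{\slice}_i^\ket\bTC$ and comparing, it shows directly that $\Fil_i^\BMS\bTC(-;\Z_p)$ satisfies the \emph{defining property} of the $i$th very effective cover: namely that it lies in $(\logSH_\ket^\wedge(k)_p^\wedge)_{\geq i}^\veff$ and that its cofiber $\Fil^{i-1}_\BMS\bTC(-;\Z_p)$ lies in $(\logSH_\ket^\wedge(k)_p^\wedge)_{\leq i-1}^\veff$. The second condition is already Theorem~\ref{BMS.3}. The first is the new content, and it is here that Geisser--Levine enters: via Proposition~\ref{ket.4} one has $\bM\Z_p^\syn(i)\simeq (L_\ket\bM\Z(i))_p^\wedge$ in the Kummer \'etale $p$-complete category, and since $\bM\Z(i)[2i]$ is $i$-very-effective (Theorem~\ref{K.4}) and both $L_\ket$ and $p$-completion preserve very effectiveness (being left adjoints), each $\gr_i^\BMS\bTC(-;\Z_p)$ is $i$-very-effective. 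An induction plus colimit then gives the same for $\Fil_i^\BMS$. No completeness argument for the slice tower is needed, and the slices of $\bTC$ are obtained as a \emph{consequence} rather than an input.
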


In other words, despite having very different origins, we can identify the slice and the BMS filtration. We see this as an answer to Mathew's question for $\TC(-;\Z_p)$,
i.e.,
the slice filtrations realize the two ``motivic filtrations'' on $\Kth$ and $\TC(-;\Z_p)$. We remark that the proof of the above theorem crucially relies on Geisser-Levine \cite{MR1738056}, together with a motivic version of the prismatic-crystalline comparison \cite{BLMP}.

It is a natural question to compare the slice towers in $\logSH(S)$ and $\SH(S)$. 
When the base $S$ is the spectrum of a field that admits resolution of singularities, we can combine the above results with the trace methods and obtain the following.

\begin{thm}
\label{intro.1}
Let $k$ be a perfect field of characteristic $p>0$ admitting resolution of singularities.
\begin{enumerate}
\item The log motivic trace maps in $\logSH(k)$
\begin{gather*}
\KGL \to \bTHH(-;\Z_p),\quad 
\KGL \to \bTC^-(-;\Z_p),
\\
\KGL \to \bTP(-;\Z_p),\quad 
\KGL \to \bTC(-;\Z_p)
\end{gather*}
are compatible with the slice filtrations on the left-hand sides and the BMS filtrations on the right-hand sides.
\item
The natural maps
\[
\KGL \to
\bTC(-;\Z_p)
\to
\bTC^-(-;\Z_p)
\to
\bTP(-;\Z_p),\bTHH(-;\Z_p)
\]
on the graded pieces give rise to the natural maps
\[
\MZ(i)[2i]\to L_{\sproet}\bW\bOmega^i_{\log}[i]\to \Fil^\rN_i\bW\bOmega[2i]\to \bW\bOmega[2i], \Fil^\conj_i\bOmega[2i].
\]
\item
The graded pieces of the log trace maps yield natural morphisms for every $X\in \SmlSm/k$
\begin{gather*}
R\Gamma_{\mot}(X-\partial X,\Z(i))
\to
R\Gamma_{\seta}(X,\tau^{\leq i}\Omega),
\\
R\Gamma_{\mot}(X-\partial X,\Z(i))
\to
R\Gamma_{\seta}(X,\Fil_i^\rN W\Omega),
\\
R\Gamma_{\mot}(X-\partial X,\Z(i))
\to
R\Gamma_{\seta}(X,W\Omega),
\\
R\Gamma_{\mot}(X-\partial X,\Z(i))
\to
R\Gamma_{\sproet}(X,W\Omega^i_{\log}[-i]),
\end{gather*}
where the left-hand side denotes Voevodsky-Suslin-Friedlander motivic cohomology with integral coefficients \cite{VSF}. 
\end{enumerate}
\end{thm}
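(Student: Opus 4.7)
The plan is to reduce all three parts to our previous compatibility and identification results (Theorems~\ref{thm:introTC} and~\ref{ket.5}), combined with the identification of the slices of $\KGL$ and of the BMS graded pieces in characteristic $p$. For part~(1), by Theorem~\ref{thm:introTC} each BMS-filtered piece $\Fil^n_{\BMS}$ of $\bTHH(-;\Z_p)$, $\bTC^-(-;\Z_p)$, $\bTP(-;\Z_p)$, $\bTC(-;\Z_p)$ lies in $\logSH(k)^{\veff}_{\leq n}$, so the universal property of the very effective cover yields canonical lifts $\tilde{\fil}_n\to \Fil_n^{\BMS}$ for each target. Since the log cyclotomic trace $\KGL\to \bTC(-;\Z_p)$ of \cite[Corollary~8.6.2]{BPO2} (and its composites with the canonical maps to $\bTC^-$, $\bTP$, $\bTHH$) is a morphism in $\logSH(k)$, precomposing with it produces the claimed compatibility of the very effective slice filtration on $\KGL$ with the BMS filtrations on the four targets.

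For part~(2), I would identify the $i$th graded pieces on both sides. The $i$th very effective slice of $\KGL$ in $\logSH(k)$ is $\MZ(i)[2i]$, the logarithmic analog of the standard identification, which follows from the representability and slice formalism of~\cite{BPO2}. On the right hand side, the BMS graded pieces in characteristic $p$ are, by~\cite{BMS19} transported to the log setting, $L_\sproet\bW\bOmega^i_{\log}[i]$, $\Fil^\rN_i\bW\bOmega[2i]$, $\bW\bOmega[2i]$, and $\Fil^\conj_i\bOmega[2i]$ respectively. Commutativity of the induced squares uses Theorem~\ref{ket.5} to identify the slice and BMS filtrations on $\bTC(-;\Z_p)$ after Kummer \'etale $p$-completion, together with the motivic prismatic--crystalline comparison of~\cite{BLMP}.

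For part~(3), applying $\map(\Sigma^\infty_{\P^1}X_+,-)$ to these graded pieces for $(X,\partial X)\in\SmlSm/k$ gives the stated sheaf-theoretic cohomologies of $X$ on the right hand side, while on the left hand side one identifies $\map(\Sigma^\infty_{\P^1}X_+,\MZ(i)[2i])$ with $R\Gamma_{\mot}(X-\partial X,\Z(i))$ via the identification of log motivic cohomology of $(X,\partial X)$ with the motivic cohomology of the open complement established in~\cite{BPO2}. The main obstacle is the Kummer \'etale identification of the slice and BMS filtrations on $\bTC(-;\Z_p)$ encoded in Theorem~\ref{ket.5}, which itself rests on Geisser--Levine~\cite{MR1738056} and the motivic prismatic--crystalline comparison of~\cite{BLMP}; once these inputs are in place the remaining steps amount to naturality and degree-counting bookkeeping.
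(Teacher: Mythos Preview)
Your argument for part~(1) is essentially the paper's: the co-very-effectiveness of $\Fil_\BMS^n$ (Theorem~\ref{BMS.3}, your Theorem~\ref{thm:introTC}) together with the universal property in Lemma~\ref{slice.2} produces the filtered lift, and functoriality of $\tilde{\fil}_\bullet$ along the trace map gives compatibility. You should also cite Theorem~\ref{K.4} explicitly, since the statement speaks of the \emph{slice} filtration on $\KGL$, and that theorem is what identifies it with the very effective one.

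For parts~(2) and~(3), however, your detour through Theorem~\ref{ket.5} and Geisser--Levine is both unnecessary and misplaced. Theorem~\ref{ket.5} lives in $\logSH_\ket^\wedge(k)_p^\wedge$ and treats only $\bTC$, whereas the claims in~(2) and~(3) take place in $\logSH(k)$ and cover all four targets. The paper's route is much shorter: once~(1) is established, passing to cofibers yields maps $\tilde{\slice}_i\KGL\to \gr_i^\BMS(-)$ automatically; the source is $\MZ(i)[2i]$ by Theorem~\ref{K.4}, and the targets are identified by Propositions~\ref{BMS.9} and~\ref{p.5} (the latter being the prismatic--crystalline comparison from~\cite{BLMP} at the level of $\P^1$-spectra). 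There are no additional ``squares'' whose commutativity must be verified---part~(2) merely records the sources and targets of the maps that~(1) already produced. Part~(3) then follows from~(2) by applying $\map_{\logSH(k)}(\Sigma^\infty X_+,-)$ and the representability results of~\cite{BPO2}. Geisser--Levine plays no role here; it enters only in the logically independent Theorem~\ref{ket.5}.
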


In light of Levine's comparison  between the homotopy coniveau tower and the slice tower \cite[Theorem 9.0.3]{MR2365658}, we obtain in particular that for $X\in \SmlSm/k$,
the log trace maps
\begin{gather*}
\Kth(X-\partial X) \to \THH(X;\Z_p), \quad 
\Kth(X-\partial X) \to \TC^-(X;\Z_p),
\\
\Kth(X-\partial X) \to \TP(X;\Z_p), \quad 
\Kth(X-\partial X) \to \TC(X;\Z_p)
\end{gather*}
are compatible with the homotopy coniveau filtrations on the left-hand sides and the BMS filtrations on the right-hand sides.
As an immediate corollary,
we obtain a natural morphism from the motivic Atiyah-Hirzebruch spectral sequence \cite[\S 11.3]{MR2365658}
\[
E_2^{pq}=H_{\mot}^{i-j}(X-\partial X,\Z(-j))
\Rightarrow
\pi_{-i-j}\Kth(X-\partial X)
\]
to the log version of the BMS spectral sequence \cite[Theorem 1.12(5)]{BMS19}, (see \cite[Theorem 1.3]{BLPO2} for the log case)
\[
E_2^{ij}=H^{i-j}_{\syn}(X,\Z_p(-j))
\Rightarrow
\pi_{-i-j}\TC(X;\Z_p)
\]
for $X\in \SmlSm/k$ and in particular for $X\in \Sm/k$.
In particular, we get directly that the morphism
\[H_{\mot}^{i}(X-\partial X,\Z(j)) \to   H^{i}_{\syn}(X-\partial X,\Z_p(j))\]
considered in \cite[Corollary 8.21]{BMS19}, \cite[Theorem 6.15]{BeilFiber} (using Geisser-Levine to identify $p$-adic motivic cohomology) factors canonically through the \emph{log} syntomic cohomology $H^{i}_{\syn}(X,\Z_p(j))$. Note that the input of \cite{MR1738056} is not necessary to obtain the map: it is just a consequence of the compatibility between the slice filtration and the BMS filtration, together with \cite[Theorem 6.4.2]{MR2365658}. For $p$-adic motivic cohomology with $\Q_p$-coefficients, a similar refinement has been considered by Ertl and Niziol in \cite{zbMATH07020394} (see Remark \ref{rmk:Licht_mot}). The comparison between motivic cohomology and syntomic cohomology (the non-log case) is also considered in the recent work of Elmanto-Morrow \cite{Elmanto_Morrow}.

The non-topological counterpart (for classical cyclic, periodic, and Hochschild homology) also holds.

\begin{thm}
\label{intro.2}
Let $k$ be a perfect field of admitting resolution of singularities (e.g., $\chara k=0$).
\begin{enumerate}
\item The log motivic trace maps in $\logSH(k)$
\begin{gather*}
\KGL \to \bHH(-/k),\quad 
\KGL \to \bHC^-(-/k),\quad 
\KGL \to \bHP(-/k)
\end{gather*}
are compatible with the slice filtrations on the left-hand sides, the HKR filtration on $\bHH$,  and the Beilinson filtrations on $\bHC^-$ and $\bHP$.
\item The graded pieces of the log motivic trace maps are identified with
\begin{gather*}
\MZ(i)[2i]
\to
\bOmega^i[i],
\\
\MZ(i)[2i]
\to
\bOmega^{\geq i}[2i],
\\
\MZ(i)[2i]
\to
\bOmega[2i].
\end{gather*}
\item The graded pieces of the log trace maps yield natural morphisms
\begin{gather*}
R\Gamma_{\mot}(X-\partial X,\Z(i))
\to
R\Gamma_{\sNis}(X,\bOmega^i)[-i],
\\
R\Gamma_{\mot}(X-\partial X,\Z(i))
\to
R\Gamma_{\sNis}(X,\bOmega^{\geq i}),
\\
R\Gamma_{\mot}(X-\partial X,\Z(i))
\to
R\Gamma_{\sNis}(X,\bOmega).
\end{gather*}
\end{enumerate}
\end{thm}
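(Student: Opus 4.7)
The plan is to mirror the argument for Theorem \ref{intro.1}, substituting the BMS filtration with the HKR filtration on $\bHH(-/k)$ and the Beilinson filtration on $\bHC^-(-/k)$ and $\bHP(-/k)$. The essential input is Theorem \ref{thm:introHC}, which locates the truncations $\Fil^n_{\HKR}\bHH(-/k)$, $\Fil^n_{\rB}\bHC^-(-/k)$, and $\Fil^n_{\rB}\bHP(-/k)$ in $\logDA(k)^{\veff}_{\leq n}$ of Construction \ref{constr.veff}. Combined with the slice structure on $\KGL$ in $\logSH(k)$ from \cite[\S 6.5.6]{BPO2} and the log motivic HKR trace of \cite[Corollary 8.6.2]{BPO2}, this powers all three parts.

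For part (1), for each $n\in \Z$ I would show that the composition
\[
\tilde{\fil}_{n+1}\KGL \to \KGL \to \bHH(-/k) \to \Fil^n_{\HKR}\bHH(-/k)
\]
is null-homotopic, and analogously for $\bHC^-$ and $\bHP$. By construction $\tilde{\fil}_{n+1}\KGL \in \logSH(k)^{\veff}_{\geq n+1}$, whereas Theorem \ref{thm:introHC} places the rightmost term in $\logDA(k)^{\veff}_{\leq n}$, and the orthogonality of these two subcategories with respect to the very effective $t$-structure forces the composite to vanish. This produces the desired lift of the trace map to the filtered level.

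For part (2), I would compute the graded pieces of the filtered trace. Over a perfect field admitting resolution of singularities, the very effective slices of $\KGL$ in $\logSH(k)$ are $\MZ(i)[2i]$, the logarithmic analog of Levine's calculation, obtained by transferring from $\SH(k)$ along the embedding of \cite{BPO2}. On the target side, Proposition \ref{HKR.6} gives $\gr^i_{\HKR}\bHH(-/k)\simeq \bOmega^i[i]$; the Beilinson filtration on $\bHC^-$ (resp.\ $\bHP$) has graded pieces the log Hodge-filtered de Rham complex $\bOmega^{\geq i}[2i]$ (resp.\ the full log de Rham $\bOmega[2i]$). Passing to the $i$-th graded piece of the filtered morphism from (1) then yields the three displayed identifications of arrows.

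For part (3), I would evaluate the morphisms from (2) on $(X,\partial X)\in \SmlSm/k$ and take strict Nisnevich global sections. The left-hand term $\Map_{\logDA(k)}((X,\partial X), \MZ(i)[2i])$ is identified with $R\Gamma_{\mot}(X-\partial X,\Z(i))$ in the sense of Voevodsky-Suslin-Friedlander using the representability results recalled in the introduction; the right-hand terms are the strict Nisnevich hypercohomology of the relevant log (Hodge-filtered) de Rham sheaf on $X$, up to the appropriate shift. The main obstacle is the identification of the slices of $\KGL$ in $\logSH(k)$ with $\MZ(i)[2i]$, i.e., controlling the interaction between the very effective cover functors and the passage from $\SH(k)$ to $\logSH(k)$; this is precisely where resolution of singularities enters, and once established the remaining arguments are formal.
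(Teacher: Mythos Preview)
Your proposal is correct and follows essentially the same route as the paper: the paper's proof cites Lemma~\ref{slice.2} (which packages your orthogonality argument into a unique morphism of filtered objects), Theorem~\ref{K.4} (the identification of the very effective slices of $\KGL$ with $\MZ(i)[2i]$), and Theorem~\ref{HKR.3} ($=$ Theorem~\ref{thm:introHC}) for part~(1), then Proposition~\ref{HKR.5} for the graded pieces in part~(2), with part~(3) following by evaluation. One small slip: the identification $\gr^i_{\HKR}\bHH(-/k)\simeq \bOmega^i[i]$ and its analogues for $\bHC^-$ and $\bHP$ is Proposition~\ref{HKR.5}, not Proposition~\ref{HKR.6} (the latter is the $(\P^n,\P^{n-1})$-invariance statement).
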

We remark that the compatibility between the slice filtration and the BMS filtration along the trace map has been established using completely different methods (see again \cite{Elmanto_Morrow} and the upcoming work of Bachmann-Elmanto-Morrow). 
Our results refine such statements (at least over a field), since the trace map further factors through the log invariants, in a way that is compatible with the motivic filtrations on the source and 
target.

Finally, in the last section of the paper, we consider the slice filtration in the (hypercomplete) Kummer \'etale version $\logSH^{\wedge}_{\ket}(k)$ of $\logSH(k)$. 

\begin{thm}[See Theorem \ref{thm:KummerLich}]
Let $k$ be a perfect field admitting resolution of singularities,
and assume that the \'etale cohomological dimension of $k$ is finite.

There are natural equivalences
\[
\slice_i^\ket L_{\ket}\KGL
\simeq
\tilde{\slice}_i^\ket L_{\ket}\KGL
\simeq
L_{\ket}\slice_i\KGL
\simeq
L_{\ket}\tilde{\slice}_i\KGL
\simeq
\Sigma^{2i,i}L_{\ket}\MZ
\]
in $\logSH_{\ket}^\wedge (k)$. Moreover, $L_{\ket} \MZ$ represents Lichtenbaum \'etale motivic cohomology \cite[Lecture 10]{MVW} when restricted to $\Sm/k$. 
\end{thm}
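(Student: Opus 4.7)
The plan is to establish the four equivalences in the chain from right to left, then identify $L_\ket \MZ$ with Lichtenbaum étale motivic cohomology separately. From the identification of the graded pieces of $\KGL$ recalled earlier in the paper (in particular Theorem \ref{intro.1}(2), which gives $\slice_i \KGL \simeq \tilde{\slice}_i \KGL \simeq \Sigma^{2i,i} \MZ$ in $\logSH(k)$), and using that $L_\ket$ is a left adjoint (hence preserves colimits and commutes with $\Sigma^{2i,i}$), the two rightmost equivalences
\[
L_\ket \slice_i \KGL \simeq L_\ket \tilde{\slice}_i \KGL \simeq \Sigma^{2i,i} L_\ket \MZ
\]
are automatic.

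The leftmost two equivalences amount to showing that $L_\ket$ commutes with the effective and very effective slice functors when applied to $\KGL$. One direction is immediate: $L_\ket$ sends the effective (respectively very effective) subcategory of $\logSH(k)$ into its Kummer étale analog, since it carries the standard generators ($\P^1$-suspensions of objects in $\SmlSm/k$) to generators of the Kummer étale versions. The more delicate point is to verify that $L_\ket$ also preserves the coeffective subcategory, i.e.\ that mapping spectra into coeffective targets behave well under Kummer étale hypersheafification. Here the assumption on the finite étale cohomological dimension of $k$ is essential: combined with hypercompleteness, it provides the uniform bound on Kummer étale cohomological amplitude needed to ensure convergence of the slice tower in $\logSH_\ket^\wedge(k)$. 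Granted these two preservation properties, the universal property of the (very) effective truncation functors yields
\[
\slice_i^\ket L_\ket \KGL \simeq L_\ket \slice_i \KGL, \qquad \tilde{\slice}_i^\ket L_\ket \KGL \simeq L_\ket \tilde{\slice}_i \KGL.
\]

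For the Lichtenbaum identification, restrict to $X \in \Sm/k$ equipped with the trivial log structure: the Kummer étale site of $X$ then coincides with its small étale site, so $L_\ket \MZ$ restricted to $\Sm/k$ agrees with the (hypercomplete) étale hypersheafification of $\MZ$. Since $\MZ$ represents Voevodsky's motivic cohomology on $\Sm/k$, its étale hypersheafification represents Lichtenbaum étale motivic cohomology by \cite[Lecture 10]{MVW}. The technical heart — and main obstacle — of the proof is the preservation of coeffectivity under $L_\ket$: the effective direction is formal, but the coeffective direction relies crucially on the finite cohomological dimension hypothesis to control the Kummer étale descent and guarantee the resulting convergence of the slice tower after localization.
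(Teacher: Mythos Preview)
Your overall shape is right, but the crucial step has a genuine gap. You claim that ``$L_\ket$ preserves the coeffective subcategory'' and attribute this to finite \'etale cohomological dimension giving a uniform bound on cohomological amplitude. Neither part is correct. Localization functors do \emph{not} in general preserve coeffectivity (only effectivity, via Lemma~\ref{slice.4}); what is needed is the specific statement that $L_\ket\tilde{\fil}^{-1}\KGL$ lies in $\logSH_\ket^\wedge(k)_{\leq -1}^\eff$, and this is not a formal consequence of cohomological dimension bounds. The paper proves it by establishing the vanishing
\[
\map_{\logSH_\ket^\wedge(k)}(\Sigma^\infty X_+,\Sigma^{0,i}L_\ket\MZ)\simeq 0 \quad\text{for } i<0,
\]
which is Proposition~\ref{ket.1}: the $(\P^n,\P^{n-1})$-invariance of $L_\ket\omega^*\Z(i)$. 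That proposition is the substantive input you are missing. Its proof uses the arithmetic fracture square to split into rational and $\ell$-complete parts, then invokes the comparison with Voevodsky's $\mathrm{DM}^\mathrm{eff}(k,\Q)$ rationally, Kummer \'etale cohomology for $\ell\neq p$, and Geisser--Levine $\Z/p^n(i)\simeq W_n\Omega_{\log}^i[-i]$ for $\ell=p$. None of this is about cohomological dimension; in fact the finite cohomological dimension hypothesis is used for part~(3) of Theorem~\ref{thm:KummerLich} (representability of Kummer \'etale $K$-theory), not for the slice identification itself. Completeness of the very effective slice tower is automatic from connectivity (as in \cite[Proposition~5.11]{SO:twistedK}) and needs no such hypothesis.

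Your treatment of the two rightmost equivalences (via Theorem~\ref{K.4}, not Theorem~\ref{intro.1}) and of the Lichtenbaum identification (Kummer \'etale agrees with \'etale on trivial log structure, Definition~\ref{ket.2}) is fine.
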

For $X\in \SmlSm/k$ note moreover that
there is a natural equivalence
\[
\map_{\logSH_{\ket}^\wedge(k)}(\Sigma^\infty X_+,L_{\ket}\KGL)
\simeq
L_{\ket}\Kth(X),
\]
where the right hand-side is the Kummer \'etale hypersheafification of the functor $X\mapsto \Kth(X-\partial X)$ for $X\in \SmlSm/k$. In particular, for $X\in \Sm/k$ is just \'etale $K$-theory.
\begin{rmk}
Theorem \ref{intro.1} provides in particular a natural map for every $X\in \SmlSm/k$ 
\[ R\Gamma(X-\partial X, \Z(i)) \to R\Gamma_{\sproet}(X,W\Omega^i_{\log}[-i]).\]
    As pointed out by a referee, one could also consider the composite morphism
    \[ 
    R\Gamma(X-\partial X, \Z(i)) \to  R\Gamma(X-\partial X, \tau_{\geq i} \Z(i)) \to  R\Gamma(X-\partial X, \mathcal{K}^M_i[-i]) \xrightarrow{\mathrm{dlog}} R\Gamma_{\sproet}(X,W\Omega^i_{\log}[-i]),
    \]
    where $\mathcal{K}^M_i$ is the sheaf of Milnor $K$-theory, and the dlog map can be constructed using e.g.~the argument in \cite[\S 1]{JSZ}. It is reasonable to expect that the maps agree. However, this does not follow directly from our results since one would have to explicitly check the shapes of the cyclotomic trace and the resulting map after applying the slice filtration. We leave this problem to the interested reader. 
\end{rmk}

\begin{rmk}[On resolution of singularities] For the $K$-theoretic applications we have assumed to be working over a field $k$ admitting resolution of singularities. This is used in the explicit computation of the right adjoint $\omega^*$ from Voevodsky's motives to log motives, that on a smooth $k$-scheme $X$ satisfies $\omega^*(X) = (\overline{X}, \partial X)$ where $\overline{X}$ is a smooth proper compactification of $X$, with normal crossing boundary $\partial X$. 
This simple formula allows us, 
for example, 
to identify $\omega^* \mathbf{KGL}$ as well as $\omega^* \MZ$, and to show that $\omega^*$ commutes with taking (very effective) slices. Using the above, we can directly invoke the known results in $\SH(k)$.
    One may ask about compatibility between $\omega^*$ and $\fil_n$ without assuming resolution of singularities. We expect that the analog of Levine's computation holds for the log $K$-theory spectrum $\mathbf{logKGL}$ defined in \cite[\S 6.5]{BPO2}. We plan to address this question in future work. 
\end{rmk}

\subsection{Outline of the proofs}Let us discuss the main idea of the proof of Theorems \ref{thm:introHC} and \ref{thm:introTC}. In \cite{BPO2}, we constructed motivic spectra in $\logSH(S)$ representing logarithmic Hochschild, periodic, and cyclic homology. Such spectra are Bott periodic spectra, in the sense that the strict Nisnevich sheaf $\HH(-)$ of spectra on $\lSch/S$ satisfies $\Omega_{\P^1} \HH \simeq \HH$ (and similarly for $\HP(-)$ and  $\HC^{-}(-)$). The compatibility of the $\P^1$-bundle formula with the HKR and the Beilinson filtration allows us to show (see Proposition \ref{HKR.4}) that we can promote the construction and obtain filtered motivic $\P^1$-spectra, satisfying the equivalence
\[\Fil_i^{\HKR}\HH(-/R)(1)[2] \simeq \Fil^{\HKR}_{i+1}\HH(-/R)
\]
and similarly for $\HP$ and $\HC^{-}$ with the Beilinson filtration, for any  ring $R$. At this point, the co-effectivity of $\Fil_{\HKR}^n \HH$ with respect to the $t$-structure given by the very effective slice tower is a direct consequence of the vanishing of the sheaf cohomology $H^{2j}(X, \Omega^j_{X/R})$ for negative $j$. 

The topological counterpart, that is, for $\THH$ and variant, is proven by a conceptually similar method. More precisely, one shows using the $\P^1$ bundle formula that the BMS filtration on $\THH$ gives an equivalence 
\[\Fil_i^{\BMS} \THH(-;\Z_p)(1)[2] \simeq  \Fil^{\BMS}_{i+1} \THH(-;\Z_p),\]
and similarly for $\TP$, $\TC^{-}$ and $\TC$, see Proposition \ref{BMS.5}. The co-effectivity statement can then be deduced by passing to the associated graded, reducing to vanishing in (log) prismatic cohomology. This is shown by log quasi-syntomic descent. Overall, the co-effectivity results immediately imply the existence of the natural morphisms between the very effective slice tower and the BMS, HKR, and Beilinson filtrations. 

Let us now consider the natural functor $\omega^*\colon \SH(S) \to \logSH(S)$, right adjoint to the $\A^1$-localization functor from $\logSH(S)$ to $\SH(S)$ for $S$ a scheme. 
When $S=\Spec(k)$ is a perfect field that admits resolution of singularities, it is easy to show that $\omega^*$ commutes with the (very effective) slice filtration on both sides. Using the results in \cite[\S 7.7]{BPO2} and the main result of Levine in \cite[Theorems 6.4.2, 9.0.2]{MR2365658}, 
we can then identify the very effective slices of $\omega^* \KGL$ with motivic cohomology in $\logSH(k)$. Combining this with Theorems \ref{thm:introHC} and \ref{thm:introTC} we obtain Theorems \ref{intro.1} and \ref{intro.2}. 

Finally, in \S \ref{sec:Kummer}, we consider the very effective slices of Kummer \'etale $K$-theory. This allows us to consider a spectrum representing Lichtenbaum \'etale motivic cohomology in $\logSH_{\ket}(k)$: if $k$ admits resolution of singularities, this is computed by the zeroth very effective slice of the Kummer \'etale sheafification of $\KGL$. See Theorem \ref{thm:KummerLich}. Using Geisser-Levine as input, together with the identification $\MZ_p^{\syn}(i) \simeq L_{\sproet}\bW\bOmega_{\log}^i[-i]$ as motivic spectra from \cite{BLMP}, we can further show that in the Kummer étale (or in the strict étale) $p$-complete category, we have $\Fil_i^\BMS \bTC(-;\Z_p)\in (\logSH_{\ket}^\wedge (k)_p^\wedge)_{\geq i}^\veff$. Since we have already proved in general that $\Fil_i^\BMS \bTC(-;\Z_p)$ is also $i$-co-very effective, we conclude.

\subsection*{Acknowledgements} We thank Marc Levine and Alberto Merici for inspiring discussions and Matthew Morrow for helpful comments on a preliminary version of the paper.  We are also grateful to the referee for their report, which significantly helped improve the exposition. F.B.\ is partially supported by the PRIN 2022 ``The arithmetic of motives and L-functions'' at MUR (Italy).  
D.P.\ is partially supported by the research training group GRK 2240 ``Algebro-Geometric Methods in Algebra, 
Arithmetic and Topology.'' P.A.\O.\ was partially supported by 
The European Commission -- Horizon-MSCA-PF-2022 
``Motivic integral $p$-adic cohomologies", 
the RCN Project no.\ 312472 ``Equations in Motivic Homotopy Theory", 
and a Guest Professorship awarded by The Radboud Excellence Initiative.

\section{Slices for logarithmic motives}

Throughout this section,
let $S$ be a quasi-compact and quasi-separated scheme,
and let $\tau$ be any topology on the category $\lSch/S$ generated by a family of morphisms in $\lSm/S$.
Here, 
$\lSch/S$ denotes the category of fs log schemes of finite type over $S$, 
and $\lSm/S$ denotes the category of log smooth fs log schemes of finite type over $S$.
Recall that the category $\SmlSm/S$ is the full subcategory of $\lSm/S$ consisting of $X\in \lSm/S$ such that $\underline{X}\in \Sm/S$. By \cite[Remark 2.3.19]{BPO2}, \cite[Lemma A.5.10]{BPO}, it can be equivalently described as the category of pairs $X=(\ul{X},\partial X)$ such that $\ul{X}\in \Sm/S$, $X-\partial X$ is an open subscheme of $\ul{X}$, and the closed complement $\partial X$ is a strict normal crossing divisor on $\ul{X}$. We see $(\ul{X},\partial X)$ as a log scheme with the compactifying log structure given by the embedding $X-\partial X \subset \underline{X}$.

We work with the Nisnevich sheaf of stable presentable symmetric monoidal $\infty$-categories
\[
\mathrm{Sch}^\mathrm{op} \to  \mathrm{CAlg}(\mathrm{Pr}_{\Mod_\Lambda}^{\mathrm{L}}), \quad S\mapsto \logSH_\tau(S, \Lambda)^\otimes
\]
given by
\[
\logSH_\tau(S,\Lambda)^\otimes
:=
\Sp_{\P^1}(\Sh_\tau(\SmlSm/S,\Mod_\Lambda)[(\P^\bullet,\P^{\bullet-1})^{-1}])^\otimes
\]
introduced in \cite[\S 3.5]{BPO2},
where $\Lambda$ is an $\E_\infty$-ring,
and $\Mod_\Lambda$ denotes the $\infty$-category of $\Lambda$-module objects in $\Sp$. Note that this $\infty$-category is equivalent to the model obtained by looking at $(\P^1,\infty)$-local dividing Nisnevich sheaves on $\lSm/S$ by \cite[Corollary 3.5.8]{BPO2}. 
It admits a symmetric monoidal structure and is presentable and stable by \cite[Theorems 4.5.2.1(1),  7.1.2.1, Corollary 7.1.1.5]{HA}.
To simplify the notation, we will often omit the superscript $\otimes$.
If $\Lambda$ is the sphere spectrum $\Sph$,
then we omit $\Lambda$ in $\logSH_\tau(S,\Lambda)$.
If $\Lambda$ is the Eilenberg-MacLane spectrum $HR$ for a commutative ring $R$,
then we set
\[
\logDA_\tau(S)
:=
\logSH_\tau(S,H\Z)
\text{ and }
\logDA_\tau(S,R)
:=
\logSH_\tau(S,H R).
\]
If $\tau$ is the strict Nisnevich topology,
then we omit the subscript $\tau$.

We will be using the following conventions for filtrations.
Let $\cF$ be an object of an $\infty$-category $\cC$.
The \emph{$\infty$-category of filtered objects of $\cC$} is $\Fun(\Z^{op},\cC)$,
where $\Z^{op}$ is the category consisting of the integers such that $\Hom_{\Z^{op}}(i,j)$ is $*$ if $i\geq j$ and $\emptyset$ if $i<j$.
A filtered object $\cF(-)$ of $\cC$ is \emph{complete} if $\lim_i \cF(i)\simeq 0$.
The \emph{underlying object of $\cF(-)$} is $\cF(-\infty):=\colim_i \cF(i)$.
The filtration $\cF(-)$ on $\cF(-\infty)$ is \emph{exhaustive}.
We write 
\begin{gather*}
\Fil_i\cF:=\cF(i),
\\
\Fil^i\cF:=\cofib(\Fil_{i+1}\cF\to \cF),
\\
\gr_i \cF:=\cofib(\Fil_{i+1}\cF\to \Fil_i\cF).
\end{gather*}
Observe that we have $\gr_i\cF\simeq \fib(\Fil^i \cF\to \Fil^{i-1}\cF)$.

The slice filtration on $\SH(S)$ is due to Voevodsky \cite[\S 2]{MR1977582}.
We have its direct analog on $\logSH(S)$ as follows.
\begin{const}
\label{slice.1}
Let $\logSH_\tau(S,\Lambda)^\eff \subset \logSH_\tau(S,\Lambda)$ be the category of \emph{effective spectra},
that is, the full stable $\infty$-subcategory of $\logSH_\tau(S, \Lambda)$ generated under
colimits by $\Sigma^{n,0}\Sigma^\infty_{\P^1} X_+$; 
here $\Sigma^\infty_{\P^1} X_+$ (denoted $\Sigma^{\infty}_T X_+$ in \cite{BPO2}) is the $\P^1$-suspension spectrum of $X\in \SmlSm/S$ equipped with an extra 
base-point, $n$ is an integer, 
and $\Sigma^{p,q}$ tensoring with the log motivic sphere 
$S^{p-q}\otimes (\A^1/(\A^1,0))^{\otimes q}$ 
(equivalently, it is the localizing subcategory as a triangulated generated by $\Sigma^\infty_{\P^1} X_+$ for varying $X$, which means the smallest full triangulated subcategory containing those objects and closed under direct sums). See also \cite[\S 4]{logDMcras} for a quick recollection. 

For every integer $i\in \Z$,
let $\Sigma_{\P^1}^i \logSH_\tau(S,\Lambda)^\eff\subset \logSH_\tau(S,\Lambda)$ be the 
full subcategory of $\logSH_\tau(S,\Lambda)$ generated under colimits by $\Sigma_{\P^1}^iE$ for $E\in \logSH_\tau(S,\Lambda)^\eff$.
These categories form an exhaustive filtration of $\logSH_\tau(S, \Lambda)$ by subcategories 
\[
\cdots
\subset
\Sigma_{\P^1}^{i+1}\logSH_\tau(S,\Lambda)^\eff
\subset
\Sigma_{\P^1}^i\logSH_\tau(S,\Lambda)^\eff
\subset
\cdots
\subset
\logSH_\tau(S,\Lambda).
\]

 \begin{df}
We will write $\logSH_\tau(S, \Lambda)_{\geq i}^{\eff}$ for 
 $\Sigma_{\P^1}^i \logSH_\tau(S, \Lambda)^{\eff}$ and call it the $\infty$-category of \emph{$i$-effective spectra}.

 Similarly, let $\logSH_\tau(S,\Lambda)_{\leq i}^\eff$ be the $\infty$-category of \emph{$i$-coeffective spectra},
that is the full subcategory of $\logSH_\tau(S,\Lambda)$ spanned by those objects $\cG$ such that $\Hom_{\logSH_\tau(S,\Lambda)}(\cF,\cG)\simeq 0$ for every $\cF\in \logSH_\tau(S,\Lambda)_{\geq i+1}^\eff$.
\end{df}
The inclusion $\iota_i\colon \Sigma_{\P^1}^i\logSH_\tau(S,\Lambda)^\eff\subset \logSH_\tau(S,\Lambda)$ preserves colimits and hence admits a right adjoint $\mathrm{r}_i$.
Let $\fil_i^\tau\to \id$ be the counit of this adjunction pair,
and let $\fil^{i-1}_\tau$ be the cofiber of this natural transformation.
Observe that the essential image of $\fil_\tau^i$ is in $\Sigma_{\P^1}^i \logSH_\tau(S,\Lambda)^\eff$.
We set $\slice_i^\tau:=\cofib(\fil_{i+1}^\tau\to \fil_i^\tau)$.
We omit $\tau$ in $\fil_i^\tau$, $\fil^i_\tau$, and $\slice_i^\tau$ when $\tau=\sNis$.

We also remark that the inclusion $\iota_i$ preserves compact objects. 
Thus, its right adjoint $\mathrm{r}_i$ preserves filtered colimits \cite[Proposition 5.5.7.2]{HTT}, and hence the composite $\fil_i=\iota_i\mathrm{r}_i$ preserves filtered colimits too.
\end{const}
\begin{rmk}
We note that $\logSH(S)^\eff$ is a stable symmetric monoidal $\infty$-subcategory of $\logSH(S)$.
\end{rmk}

We also consider the log version of the very effective slice filtration following
Spitzweck-{\O}stv{\ae}r \cite[\S 5]{SO:twistedK}.

\begin{const}\label{constr.veff}Let 
 $\logSH_\tau(S, \Lambda)^\veff$ be the smallest full subcategory of $\logSH_\tau(S, \Lambda)$ containing $\Sigma_{\P^1}^\infty X_+$, closed under colimits and closed under extensions in the sense that if $X\to Y \to Z$ is a cofiber sequence in $\logSH_\tau(S, \Lambda)$ with $X$ and $Z$ in $\logSH_\tau(S, \Lambda)^{\rm veff}$, then also $Y\in \logSH_\tau(S, \Lambda)^{\rm veff}$. 
   We note that $\logSH_\tau(S, \Lambda)^{\rm veff}$ is a symmetric monoidal $\infty$-subcategory of $\logSH_\tau(S,\Lambda)$, but unlike $\logSH_\tau(S, \Lambda)^\eff$ it is not stable.

We can consider the colimit-preserving inclusions (we omit $\tau$ and $\Lambda$ for simplicity)
\[ \Sigma_{\P^1}^{i+1}\logSH(S)^{\rm veff}  \subset \Sigma_{\P^1}^i \logSH(S)^{\rm veff} \subset \Sigma_{\P^1}^{i-1} \logSH(S)^{\rm veff}  \subset \cdots \subset \logSH(S)\]
giving rise via their right adjoints to functorial filtrations
\[
\tilde{\fil}_{i+1}^\tau E  \to \tilde{\fil}_i^\tau E \to \tilde{\fil}_{i-1}^\tau E \to \cdots \to E
\]
where each $\tilde{\fil}_i^\tau$ is the right adjoint to the inclusion $\Sigma_{\P^1}^i\logSH_\tau(S, \Lambda)^{\rm veff} \subset \logSH_\tau(S, \Lambda)$.
The very effective slice functors $\tilde{\slice}_i^\tau\colon \logSH_\tau(S, \Lambda) \to \logSH_\tau(S, \Lambda)$ are then defined by the cofiber sequence
\[ \tilde{\fil}_{i+1}^\tau \to \tilde{\fil}_i^\tau \to \tilde{\slice}_i^\tau. \]
Note that one can effectively ``compute'' the $i$th slice as $\tilde{\slice}_{i}^\tau E \simeq \Sigma_{\P^1}^i \tilde{\slice}_{0}^\tau E$ for every motivic spectrum $E$ that satisfies Bott periodicity $\Sigma_{\P^1} E\simeq E$. 
As before, we omit $\tau$ in $\fil_i^\tau$ and $\fil^i_\tau$ when $\tau=\sNis$.

\begin{df} We will write $\logSH_\tau(S, \Lambda)_{\geq i}^{\veff}$ for $\Sigma_{\P^1}^i \logSH_\tau(S, \Lambda)^{\veff}$.
We define for every integer $i$ the category $\logSH_\tau(S, \Lambda)^{\veff}_{\leq 0}$ to be the category of \emph{co-very effective} spectra, that is the full subcategory of $\logSH_\tau(S, \Lambda)$ that is spanned by those objects $\cG$ such that $\Hom_{\logSH_\tau(S,\Lambda)}(\cF, \cG)\simeq 0$ for every $\cF\in  \logSH_\tau(S, \Lambda)^{\veff}_{\geq 1}$. We similarly write $\logSH_\tau(S, \Lambda)_{\leq i}^{\veff}$ for the right orthogonal subcategory to $ \logSH_\tau(S, \Lambda)^{\veff}_{\geq i+1}$.
\end{df}
\end{const}

\begin{rmk}
For every integer $i$,
observe that we have the obvious inclusions
\[
\logSH(S)_{\geq i}^\veff
\subset
\logSH(S)_{\geq i}^\eff
\text{ and }
\logSH(S)_{\leq i}^\eff
\subset
\logSH(S)_{\leq i}^\veff.
\]
\end{rmk}
\begin{rmk}Assume that $S=\Spec(k)$ is a field and that $\tau=\mathrm{dNis}$.
    As in $\mathbb{A}^1$-motivic homotopy theory, $\logSH_\tau(k,\Lambda)^{\rm veff}$ is the connective part of $t$-structures  on $\logSH_\tau(S,\Lambda)$ and on $\logSH_\tau(S,\Lambda)^{\rm eff}$ by \cite[Proposition 1.4.4.11]{HA}. Note that the analog of Morel's connectivity theorem in the log setting \cite[Theorem 3.2]{Binda_Merici}, together with semi-purity \cite[Theorem 4.4]{Binda_Merici}, gives that this is exactly the non-negative part of the standard homotopy $t$-structure (see \cite[Theorem 5.2.3]{MR2175638}).
\end{rmk}

\begin{rmk}
    A similar construction can be performed in other motivic settings. 
    For example, one could consider in the category of 
    $\P^1$-spectra $\MS_S$ introduced by Annala-Iwasa \cite{AI} the subcategory $\mathrm{MS}^{\rm veff}_S$ generated by $\Sigma_{\P^1}^\infty X_+$ for $X\in \Sm/S$ and closed under colimits and extensions. Again by \cite[Proposition 1.4.4.11]{HA}, 
    this is the connective part of a  $t$-structure on $\MS_S$ (that we can call the 
    homotopy $t$-structure, if the expected connection to homotopy sheaves holds). We can consider the colimit-preserving inclusions
    \[
   \Sigma_{\P^1}^{i+1}  {\MS}^\veff_S \subset  \Sigma_{\P^1}^i  \MS^\veff_S \subset \cdots \subset \MS_S
    \]
    giving rise via their right adjoints to functorial filtrations
    \[ \tilde{\fil}_{i+1} E \to \tilde{\fil}_i E \to \cdots \to E  \]
    for any spectrum $E$.
\end{rmk}
Recall from \cite[Construction 4.0.18]{BPO2} that there is a symmetric monoidal functor 
\[\lambda_\sharp\colon \MS_S \to \logSH(S)\]
induced by the functor $\lambda\colon \Sm/S \to \SmlSm/S$ given by $\lambda(X) := X$ (seen as log scheme with trivial log structure). The following is immediate from the definitions. 
\begin{prop}Let $S$ be any quasi-compact quasi-separated scheme. Then for every integer $n$ we have the inclusion $\lambda_{\sharp} \Sigma_{\P^1}^n\MS_S^\veff \subset \Sigma_{\P^1}^n \logSH(S)^\veff$.  
\end{prop}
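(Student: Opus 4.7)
The plan is to reduce to the case $n=0$ and then argue by the universal property of the very effective subcategory. Since $\lambda$ sends $(\P^1,\infty) \in \Sm/S$ (equipped with the trivial log structure) to $(\P^1,\infty) \in \SmlSm/S$ (with trivial log structure) and $\lambda_\sharp$ is symmetric monoidal, there is a canonical equivalence $\lambda_\sharp \Sigma_{\P^1}^n E \simeq \Sigma_{\P^1}^n \lambda_\sharp E$ for every $E \in \MS_S$ and every integer $n$. Hence, once we establish that $\lambda_\sharp$ sends $\MS_S^\veff$ into $\logSH(S)^\veff$, the general statement follows by applying $\Sigma_{\P^1}^n$ to both sides.

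For the case $n=0$, the strategy is the standard generators-and-closure argument. Consider the full subcategory
\[
\cC := \{\,E \in \MS_S \mid \lambda_\sharp E \in \logSH(S)^\veff\,\} \subset \MS_S.
\]
The first step is to verify that $\cC$ contains every suspension spectrum $\Sigma_{\P^1}^\infty X_+$ with $X \in \Sm/S$: by construction $\lambda_\sharp \Sigma_{\P^1}^\infty X_+ \simeq \Sigma_{\P^1}^\infty \lambda(X)_+$, and $\lambda(X)$ is just $X$ viewed as an object of $\SmlSm/S$ with trivial log structure, so this lies in $\logSH(S)^\veff$ by definition. The second step is closure of $\cC$ under colimits, which is immediate because $\lambda_\sharp$ is a left adjoint (being symmetric monoidal between presentable $\infty$-categories arising from suspension spectra of representables) and $\logSH(S)^\veff$ is closed under colimits by Construction \ref{constr.veff}. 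The third step is closure under extensions: if $E \to F \to G$ is a cofiber sequence in $\MS_S$ with $E, G \in \cC$, then $\lambda_\sharp E \to \lambda_\sharp F \to \lambda_\sharp G$ is a cofiber sequence in the stable $\infty$-category $\logSH(S)$ with outer terms in $\logSH(S)^\veff$, so $\lambda_\sharp F \in \logSH(S)^\veff$ by the extension-closure property.

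Since $\MS_S^\veff$ is by definition the smallest subcategory of $\MS_S$ containing the generators $\Sigma_{\P^1}^\infty X_+$ and closed under colimits and extensions, the three steps together force $\MS_S^\veff \subset \cC$, which is exactly the $n=0$ claim.

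I do not expect any genuine obstacle here: every step is formal, and the only slightly delicate point is the compatibility $\lambda_\sharp \Sigma_{\P^1}^n \simeq \Sigma_{\P^1}^n \lambda_\sharp$, which follows from the symmetric monoidality of $\lambda_\sharp$ together with the observation that $\lambda$ preserves the object $\P^1$ (with its natural basepoint) on the nose. If any subtlety were to arise, it would be in checking that the abstract model for $\MS_S$ used by Annala--Iwasa genuinely produces $\lambda_\sharp$ as a colimit-preserving, symmetric monoidal functor; but this is exactly what is recorded in \cite[Construction 4.0.18]{BPO2}.
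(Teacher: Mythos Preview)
Your proof is correct and is precisely the standard generators-and-closure argument one has in mind here. The paper itself does not give a proof beyond stating that the proposition ``is immediate from the definitions''; your write-up simply unpacks what that phrase means, so the approaches coincide.
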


We record some elementary properties of the slice filtration.
\begin{lem}
\label{slice.4}
Let $\tau'$ be a topology on $\SmlSm/S$ finer than $\tau$.
Then for every integer $i$,
the $\tau'$-localization functor
\[
L_{\tau'}
\colon
\logSH_\tau(S,\Lambda)
\to
\logSH_{\tau'}(S,\Lambda)
\]
sends $\logSH_\tau(S,\Lambda)_{\geq i}^\eff$ into $\logSH_{\tau'}(S,\Lambda)_{\geq i}^\eff$,
and its right adjoint 
\[
\iota
\colon
\logSH_{\tau'}(S,\Lambda)
\hookrightarrow
\logSH_\tau(S,\Lambda)
\]
sends $\logSH_{\tau'}(S,\Lambda)_{\leq i}^\eff$ into $\logSH_\tau(S,\Lambda)_{\leq i}^\eff$.
A similar result holds for the very effective version, too.
\end{lem}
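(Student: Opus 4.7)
The plan is to deduce both statements from the observation that $L_{\tau'}$ is a left adjoint which preserves the distinguished sets of generators of the effective subcategories.

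First I would handle the claim about $L_{\tau'}$. By Construction \ref{slice.1}, the subcategory $\logSH_\tau(S,\Lambda)_{\geq i}^\eff = \Sigma_{\P^1}^i\logSH_\tau(S,\Lambda)^\eff$ is generated under colimits by the objects $\Sigma_{\P^1}^i\Sigma_{\P^1}^\infty X_+$ for $X\in \SmlSm/S$. Since $L_{\tau'}$ is left adjoint to the fully faithful inclusion $\iota$, it preserves colimits, and by its very definition as a sheafification it sends $\Sigma_{\P^1}^\infty X_+$ in $\logSH_\tau(S,\Lambda)$ to the object of $\logSH_{\tau'}(S,\Lambda)$ of the same name. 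Commutation with $\Sigma_{\P^1}^i$ then identifies each generator of $\logSH_\tau(S,\Lambda)_{\geq i}^\eff$ with a generator of $\logSH_{\tau'}(S,\Lambda)_{\geq i}^\eff$. Thus $L_{\tau'}$ sends a class of generators into the target, and since it preserves colimits it sends the entire subcategory $\logSH_\tau(S,\Lambda)_{\geq i}^\eff$ into $\logSH_{\tau'}(S,\Lambda)_{\geq i}^\eff$.

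The second claim about $\iota$ is then a purely formal adjunction argument. Given $\cG\in \logSH_{\tau'}(S,\Lambda)_{\leq i}^\eff$ and an arbitrary $\cF\in \logSH_\tau(S,\Lambda)_{\geq i+1}^\eff$, the adjunction gives
\[
\Map_{\logSH_\tau(S,\Lambda)}(\cF,\iota\cG)
\simeq
\Map_{\logSH_{\tau'}(S,\Lambda)}(L_{\tau'}\cF,\cG)
\simeq 0,
\]
where the vanishing uses that $L_{\tau'}\cF$ lies in $\logSH_{\tau'}(S,\Lambda)_{\geq i+1}^\eff$ by the first part. This precisely says $\iota\cG\in \logSH_\tau(S,\Lambda)_{\leq i}^\eff$.

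For the very effective version the argument is essentially the same, with one extra check. The category $\logSH_\tau(S,\Lambda)^\veff$ is the smallest full subcategory containing $\Sigma_{\P^1}^\infty X_+$ for $X\in \SmlSm/S$ and closed under colimits and extensions. As a left adjoint between stable $\infty$-categories, $L_{\tau'}$ preserves both colimits and cofiber sequences, hence extensions; combined with the identification of generators above, this shows $L_{\tau'}$ sends $\logSH_\tau(S,\Lambda)_{\geq i}^\veff$ into $\logSH_{\tau'}(S,\Lambda)_{\geq i}^\veff$. The corresponding claim for $\iota$ on the co-very-effective side follows by the same adjunction computation as in the previous paragraph. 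There is no real obstacle here; the only subtle point is to remember that the non-stable very effective subcategories are closed under extensions, so one must invoke exactness of $L_{\tau'}$ in addition to colimit preservation.
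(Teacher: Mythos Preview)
Your proof is correct and follows essentially the same approach as the paper's: both argue that $L_{\tau'}$ preserves colimits and sends the generators $\Sigma^{2n,n}\Sigma_{\P^1}^\infty X_+$ to themselves, and then deduce the statement for $\iota$ by the adjunction isomorphism. Your treatment of the very effective case, noting that exactness of $L_{\tau'}$ is needed to handle extension-closure, is a useful elaboration the paper leaves implicit; one small imprecision is that the generators of $\logSH_\tau(S,\Lambda)_{\geq i}^\eff$ include all shifts $\Sigma^{n,0}\Sigma_{\P^1}^i\Sigma_{\P^1}^\infty X_+$ (not just $\Sigma_{\P^1}^i\Sigma_{\P^1}^\infty X_+$), but since $L_{\tau'}$ commutes with $\Sigma^{n,0}$ this does not affect the argument.
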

\begin{proof}
We only give proof for the effective version.
The statement about the left adjoint is a consequence of the fact that $L_{\tau'}$ preserves colimits and sends $\Sigma^{2n,n}\Sigma^\infty X_+$ to  $\Sigma^{2n,n}\Sigma^\infty X_+$ for every integer $n$.
Use the natural isomorphism
\[
\Hom_{\logSH_{\tau}(S,\Lambda)}
(\cF,\iota \cG)
\cong
\Hom_{\logSH_{\tau'}(S,\Lambda)}(L_{\tau'}\cF,\cG)
\]
for $\cF\in \logSH_{\tau}(S,\Lambda)_{\geq i+1}^\eff$ and $\cG\in \logSH_{\tau'}(S,\Lambda)_{\leq i}^\eff$ to deduce the statement about the right adjoint.
\end{proof}
\begin{lem}
\label{slice.3}
Let $\Lambda\to \Lambda'$ be a map of $\E_\infty$-rings.
Then the base-change functor
\[
- \otimes_\Lambda \Lambda'
\colon
\logSH_\tau(S,\Lambda)
\to
\logSH_\tau(S,\Lambda')
\]
sends $\logSH_\tau(S,\Lambda)_{\geq i}^\eff$ into $\logSH_\tau(S,\Lambda')_{\geq i}^\eff$,
and the restriction functor
\[
\logSH_\tau(S,\Lambda')
\to
\logSH_\tau(S,\Lambda)
\]
sends $\logSH_\tau(S,\Lambda')_{\leq i}^\eff$ to $\logSH_\tau(S,\Lambda)_{\leq i}^\eff$.
A similar result holds for the very effective version, too.
\end{lem}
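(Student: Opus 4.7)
The plan is to mimic the proof of Lemma \ref{slice.4} essentially verbatim, with the role of $L_{\tau'}\dashv \iota$ replaced by the base-change/restriction adjunction $(-\otimes_\Lambda \Lambda')\dashv \mathrm{res}$. The key structural inputs are that $-\otimes_\Lambda\Lambda'$ is a symmetric monoidal left adjoint, so it preserves colimits, the suspensions $\Sigma^{p,q}$ (which are tensorings with invertible objects coming from the unit), and sends the $\Lambda$-linear suspension spectrum $\Sigma^\infty_{\P^1} X_+$ to its $\Lambda'$-linear counterpart for every $X\in \SmlSm/S$.

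\medskip

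\noindent\textbf{Left adjoint statement.} The category $\logSH_\tau(S,\Lambda)_{\geq i}^\eff$ is generated under colimits by $\Sigma^{n,0}\Sigma^i_{\P^1}\Sigma^\infty_{\P^1} X_+$ for $n\in \Z$ and $X\in \SmlSm/S$. First I would check that $-\otimes_\Lambda\Lambda'$ carries each such generator to the analogous generator in $\logSH_\tau(S,\Lambda')_{\geq i}^\eff$; this is immediate from symmetric monoidality. Since colimits are preserved, the image of $\logSH_\tau(S,\Lambda)_{\geq i}^\eff$ is contained in $\logSH_\tau(S,\Lambda')_{\geq i}^\eff$.

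\medskip

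\noindent\textbf{Right adjoint statement.} For $\cF\in \logSH_\tau(S,\Lambda)_{\geq i+1}^\eff$ and $\cG\in \logSH_\tau(S,\Lambda')_{\leq i}^\eff$, I would use the adjunction isomorphism
\[
\Hom_{\logSH_\tau(S,\Lambda)}(\cF,\mathrm{res}(\cG))
\cong
\Hom_{\logSH_\tau(S,\Lambda')}(\cF\otimes_\Lambda\Lambda',\cG).
\]
By the left adjoint statement, $\cF\otimes_\Lambda\Lambda'\in \logSH_\tau(S,\Lambda')_{\geq i+1}^\eff$, hence the right-hand side vanishes by definition of $\logSH_\tau(S,\Lambda')_{\leq i}^\eff$. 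Thus $\mathrm{res}(\cG)\in \logSH_\tau(S,\Lambda)_{\leq i}^\eff$.

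\medskip

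\noindent\textbf{Very effective version.} The same argument applies, but the generation is under colimits \emph{and extensions}. One first notes that $-\otimes_\Lambda \Lambda'$, being exact (a left adjoint between stable $\infty$-categories), preserves cofiber sequences, and therefore also preserves the closure under extensions. Combined with preservation of the generators $\Sigma^i_{\P^1}\Sigma^\infty_{\P^1} X_+$, this shows the left adjoint sends $\logSH_\tau(S,\Lambda)_{\geq i}^\veff$ into $\logSH_\tau(S,\Lambda')_{\geq i}^\veff$. The right adjoint statement for the very effective coslice then follows from the identical adjunction-plus-vanishing argument as above.

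\medskip

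There is really no substantial obstacle here; the only point requiring a brief mental verification is that the base-change functor is symmetric monoidal and exact so that both the generators and the closure operations (colimits, and in the very effective case also extensions) are respected. Everything else is formal from the adjunction.
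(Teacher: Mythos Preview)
Your proof is correct and follows essentially the same approach as the paper's own proof, which simply observes that $-\otimes_\Lambda\Lambda'$ preserves colimits and sends each generator $\Sigma^{n,0}\Sigma^\infty X_+$ to the corresponding generator over $\Lambda'$. Your write-up is more detailed (in particular, you make explicit the adjunction argument for the right adjoint and the closure-under-extensions point for the very effective case), but the underlying argument is identical.
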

\begin{proof}
This is a consequence of the fact that $-\otimes_\Lambda \Lambda'$ preserves colimits and sends $\Sigma^{2n,n}\Sigma^\infty X_+$ to  $\Sigma^{2n,n}\Sigma^\infty X_+$ for every $X\in \SmlSm/S$ and integer $n$.
\end{proof}
In particular, the restriction functor for the morphism $\Sph \to H\Z$
\[ \logDA(S)= \logSH(S, H\Z) \to \logSH(S)\]
sends $\logDA(S)_{\leq i}^\eff$ to $\logSH(S)_{\leq i}^\eff$.

The following easy Lemma will be useful for the main theorem later in the text.
\begin{lem}
\label{slice.2}
If $\cF\in \logSH_\tau(S,\Lambda)$ admits a filtration such that $\Fil^i\cF\in \logSH_\tau(S,\Lambda)_{\leq i}^\eff$ for every integer $i$,
then, there exists a unique natural morphism of filtered objects
\[
\fil_\bullet^\tau \cF
\to
\Fil_\bullet \cF
\]
whose morphism of underlying objects is $\id \colon \cF\to \cF$.
Similarly, if $\cF \in \logSH_\tau(S, \Lambda)$ admits a filtration such that $\Fil^i  \cF \in   \logSH_\tau(S, \Lambda)^{\veff}_{\leq i}$ for every $i$, then there exists a unique natural morphism of filtered objects 
\[
\tilde{\fil}_\bullet^\tau\cF
\to
\Fil_\bullet \cF
\]
whose morphism of underlying objects is $\id \colon \cF\to \cF$.
\end{lem}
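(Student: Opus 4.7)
My plan is to construct the morphism termwise using the orthogonality between the $i$-effective and $(i-1)$-coeffective subcategories, and then assemble the levelwise data into a natural transformation of filtered objects. I focus on the effective version; the very effective case is formally identical, using the right-orthogonality between $\logSH_\tau(S,\Lambda)_{\geq i}^\veff$ and $\logSH_\tau(S,\Lambda)_{\leq i-1}^\veff$.

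For each integer $i$, I would apply $\Map(\fil_i^\tau\cF,-)$ to the fiber sequence $\Fil_i\cF \to \cF \to \Fil^{i-1}\cF$ coming from the given filtration to obtain the fiber sequence of mapping spaces
\[
\Map(\fil_i^\tau\cF, \Fil_i\cF) \to \Map(\fil_i^\tau\cF, \cF) \to \Map(\fil_i^\tau\cF, \Fil^{i-1}\cF).
\]
By Construction \ref{slice.1}, $\fil_i^\tau\cF$ lies in $\logSH_\tau(S,\Lambda)_{\geq i}^\eff$, while by hypothesis $\Fil^{i-1}\cF = \cofib(\Fil_i\cF \to \cF)$ lies in $\logSH_\tau(S,\Lambda)_{\leq i-1}^\eff$. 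Since the latter subcategory is by definition the right orthogonal of the former, the third mapping space is contractible, so the first arrow is an equivalence. Consequently, the canonical map $\fil_i^\tau\cF \to \cF$ admits an essentially unique lift $\beta_i\colon \fil_i^\tau\cF \to \Fil_i\cF$ through $\Fil_i\cF \to \cF$.

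To promote $\{\beta_i\}_{i\in\Z}$ to a morphism of filtered objects with the prescribed underlying map, I would study the fiber of
\[
\Map_{\Fun(\Z^{op},\cC)}(\fil_\bullet^\tau\cF, \Fil_\bullet\cF) \to \Map_{\Fun(\Z^{op},\cC)}(\fil_\bullet^\tau\cF, c(\cF))
\]
induced by the augmentation $\Fil_\bullet\cF \to c(\cF)$, over the canonical augmentation $\fil_\bullet^\tau\cF \to c(\cF)$; here $c(\cF)$ denotes the constant filtration at $\cF$. This fiber is precisely the space of morphisms of filtered objects whose underlying morphism is $\id_\cF$, and by a standard identification of mapping spaces in functor categories it is computed as the limit over $\Z^{op}$ of the fibers $\fib(\Map(\fil_i^\tau\cF, \Fil_i\cF) \to \Map(\fil_i^\tau\cF, \cF))$. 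Each such fiber is contractible by the previous paragraph, so the limit is contractible, giving both existence and uniqueness. The main subtle point is making this last $\infty$-categorical coherence step precise, but it reduces formally to the termwise orthogonality input already established.
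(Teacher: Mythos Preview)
Your proof is correct and rests on the same orthogonality input as the paper's, but the execution is dual. Rather than lifting $\fil_i^\tau\cF \to \cF$ through $\Fil_i\cF$, the paper applies the coeffective localization functor $\fil_\tau^i$ to the natural map $\cF \to \Fil^i\cF$ and uses the equivalence $\fil_\tau^i(\Fil^i\cF)\simeq \Fil^i\cF$ (valid since $\Fil^i\cF$ is already $i$-coeffective) to produce a morphism of towers $\fil_\tau^\bullet\cF \to \Fil^\bullet\cF$; taking fibers over $\cF$ then yields $\fil_\bullet^\tau\cF \to \Fil_\bullet\cF$. The advantage of that route is that functoriality of $\fil_\tau^i$ immediately gives a morphism of \emph{filtered} objects, bypassing the coherence step you handle at the end. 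Conversely, the paper's uniqueness argument is phrased only at the level of $\pi_0$ via an exact sequence of Hom groups, whereas your argument via contractibility of the fiber of $\Map_{\Fun(\Z^{op},\cC)}(\fil_\bullet^\tau\cF,\Fil_\bullet\cF)\to \Map_{\Fun(\Z^{op},\cC)}(\fil_\bullet^\tau\cF,c(\cF))$ is the sharper $\infty$-categorical statement. One minor quibble: the fiber you describe is an end over the twisted arrow category of $\Z^{op}$ rather than a limit over $\Z^{op}$ itself, but this does not affect the conclusion, since the off-diagonal terms $\Map(\fil_j^\tau\cF,\Fil^{i-1}\cF)$ for $j\geq i$ vanish by the same orthogonality.
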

\begin{proof}
We only give proof for the effective version. 
Consider the naturally induced maps
\[
\fil_\tau^i\cF
\xrightarrow{}
\fil_\tau^i(\Fil^i \cF)
\xrightarrow{\simeq}
\Fil^i \cF,
\]
where the first map is obtained by functoriality and the second map is the inverse of the counit of the adjunction, which is an equivalence since  $\Fil^i \cF\in \logSH_\tau(S,\Lambda)_{\leq i}^\eff$ for every integer $i$.
By taking $\fib(\cF\to (-))$,
we get the desired morphism
\(
\fil_\bullet^\tau\cF
\to
\Fil_\bullet\cF
\).
For its uniqueness,
consider the induced exact sequence
\begin{align*}
&
\Hom_{\logSH_\tau(S,\Lambda)}(\Sigma^{1,0}\fil_{i+1}^\tau\cF,\Fil^i \cF)
\to 
\Hom_{\logSH_\tau(S,\Lambda)}(\fil_\tau^i\cF,\Fil^i \cF)
\\
\to &
\Hom_{\logSH_\tau(S,\Lambda)}(\cF,\Fil^i \cF)
\to
\Hom_{\logSH_\tau(S,\Lambda)}(\fil_{i+1}^\tau\cF,\Fil^i \cF).
\end{align*}
The first and fourth terms are $0$ since $\Sigma^{1,0}$ sends $\logSH_\tau(S,\Lambda)_{\geq i+1}^\eff$ into itself (note that this works verbatim for $\logSH_\tau(S,\Lambda)_{\geq i+1}^{\rm veff}$ as well, since it is closed under  colimits, hence in particular under positive suspensions).
This establishes the uniqueness of $f^\bullet\cF\to \Fil^\bullet \cF$ and hence the uniqueness of $\fil_\bullet^\tau \cF\to \Fil_\bullet\cF$.
\end{proof}

\section{Slice filtration on K-theory}

One of the fundamental results in $\A^1$-homotopy theory is the equivalence 
$\slice_i \KGL\simeq \Sigma^{2i,i}\MZ$ in $\SH(k)$,
where $k$ is a perfect field, due to Voevodsky \cite{VoevZeroslice} in characteristic zero and Levine \cite{MR2365658} in general (this is now known more generally, 
e.g., over Dedekind schemes by \cite{BachmannJEMS}, \cite[Theorem 2.19]{zbMATH07003144}. 
 The effective and the very effective slices of $\KGL$ coincide, 
see \cite[Lemma 2.4]{AnRoOest}, 
so that, in particular, the motivic Eilenberg-MacLane spectrum $\bM\Z$ is very effective. 

In this section,
we promote this to $\logSH(k)$, assuming resolution of singularities. 
\begin{prop}
\label{K.1}
Let $S$ be a quasi-compact quasi-separated scheme.
Then, for every integer $i$,
we have the inclusion
\[
\omega^* \SH(S)_{\leq i}^\eff
\subset
\logSH(S)_{\leq i}^\eff
\]
A similar result holds for the very effective version, too.
\end{prop}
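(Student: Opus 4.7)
The plan is to leverage the adjunction $L_{\A^1}\dashv \omega^*$ between $\logSH(S)$ and $\SH(S)$. Given $\cG\in \SH(S)_{\leq i}^\eff$, to show $\omega^*\cG\in \logSH(S)_{\leq i}^\eff$ one must verify that $\Hom_{\logSH(S)}(\cF,\omega^*\cG)\simeq 0$ for every $\cF\in \logSH(S)_{\geq i+1}^\eff$. By the adjunction this mapping spectrum identifies with $\Hom_{\SH(S)}(L_{\A^1}\cF,\cG)$, so the task reduces to showing that $L_{\A^1}$ sends $\logSH(S)_{\geq i+1}^\eff$ into $\SH(S)_{\geq i+1}^\eff$. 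This is formally dual to the argument used in Lemmas \ref{slice.4} and \ref{slice.3}.

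Since $L_{\A^1}$ is a colimit-preserving symmetric monoidal functor (being a Bousfield localization), it commutes with $\Sigma_{\P^1}$ and with simplicial suspensions. As $\logSH(S)_{\geq i+1}^\eff$ is generated under colimits by $\Sigma^{m,0}\Sigma_{\P^1}^n\Sigma^\infty X_+$ for $X\in \SmlSm/S$, $n\geq i+1$, and $m\in \Z$, it is enough to check that $L_{\A^1}(\Sigma^\infty X_+)\in \SH(S)^\eff$ for every such $X$. Once this is granted, applying $\Sigma^{m,0}\Sigma_{\P^1}^n$ yields the desired generators of $\SH(S)_{\geq i+1}^\eff$, and the closure of $\SH(S)_{\geq i+1}^\eff$ under colimits finishes the reduction.

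The final step is the concrete input and uses the construction of $L_{\A^1}$ from \cite{BPO2}, where it is realised as an $\A^1$-localization of motivic $\P^1$-spectra on $\SmlSm/S$ whose target is identified with $\SH(S)$. Concretely, for $X\in \SmlSm/S$ the object $L_{\A^1}(\Sigma^\infty X_+)$ is obtained as a colimit of $\Sigma^\infty Y_+$ for smooth $Y\in \Sm/S$ (appearing as models in the $\A^1$-resolution, e.g.\ $Y=\ul{X}$ or $Y=X-\partial X$ depending on the model), and is manifestly effective in $\SH(S)$. The very effective case is identical: one uses in addition that $L_{\A^1}$ preserves cofiber sequences and hence extensions, and that the same representables land in $\SH(S)^\veff$. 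The main, and only, hurdle is precisely this last identification of $L_{\A^1}$ on representables; it is a direct consequence of the construction of the adjunction in \cite{BPO2}, after which both the effective and very effective statements follow at once from the two reductions above.
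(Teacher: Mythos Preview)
Your approach is essentially the same as the paper's: reduce via the adjunction to showing the left adjoint preserves $(i+1)$-effectivity, then check on generators using that the left adjoint preserves colimits. The only difference is notational and in precision: the paper writes the left adjoint as $\omega_\sharp$ and uses the clean formula $\omega_\sharp\Sigma^{n,0}\Sigma^\infty Y_+\simeq \Sigma^{n,0}\Sigma^\infty(Y-\partial Y)_+$, whereas your description of $L_{\A^1}$ on representables is vague (the value is $X-\partial X$, not $\ul{X}$, and no ``$\A^1$-resolution'' is needed).
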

\begin{proof}
By adjunction,
it suffices to show $\omega_\sharp\logSH(S)_{\geq i}^\eff\subset \SH(S)_{\geq i}^\eff$ and 
$\omega_\sharp\logSH(S)_{\geq i}^\veff\subset \SH(S)_{\geq i}^\veff$.
This holds since $\omega_\sharp$ preserves colimits and $\omega_\sharp \Sigma^{n,0} \Sigma^\infty Y_+\simeq \Sigma^{n,0} \Sigma^\infty (Y-\partial Y)_+$ for $Y\in \SmlSm/S$ and integer $n$.
\end{proof}

\begin{prop}
\label{K.2}
Let $k$ be a perfect field admitting resolution of singularities.
Then, for every integer $i$,
we have the inclusion
\[
\omega^* \SH(k)_{\geq i}^\eff
\subset
\logSH(k)_{\geq i}^\eff.
\]
A similar result holds for the very effective version, too.
\end{prop}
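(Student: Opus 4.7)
The plan is to reduce the desired containment to a check on a set of compact generators of $\SH(k)_{\geq i}^\eff$, making use of the explicit compactification formula for $\omega^*$ that is available under resolution of singularities. To this end, I would first confirm that $\omega^*$ preserves all colimits. It preserves filtered colimits because $\omega_\sharp$ carries the compact generator $\Sigma_{\P^1}^\infty(Y,\partial Y)_+$ of $\logSH(k)$ to $\Sigma_{\P^1}^\infty(Y-\partial Y)_+$, which is a compact generator of $\SH(k)$. It preserves finite colimits because it is a right adjoint, hence preserves all limits, and finite colimits coincide with finite limits in the stable setting.

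I would next verify that $\omega^*$ commutes with motivic suspensions $\Sigma^{p,q}$. Each $S^{p,q}$ is $\otimes$-invertible in both $\SH(k)$ and $\logSH(k)$, and $\omega_\sharp$ is symmetric monoidal with $\omega_\sharp S^{p,q}\simeq S^{p,q}$; the resulting Yoneda computation yields a natural equivalence $\omega^*(\Sigma^{p,q} E)\simeq \Sigma^{p,q}\omega^*(E)$.

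The crucial geometric input is the formula
\[
\omega^*(\Sigma_{\P^1}^\infty X_+)\simeq \Sigma_{\P^1}^\infty(\ol X,\partial X)_+
\]
for $X\in \Sm/k$, where $\ol X$ is a smooth proper compactification of $X$ with strict normal crossing boundary $\partial X$. This is exactly where the hypothesis on $k$ enters, and is the step I expect to be the main obstacle if one had to build it from scratch: it requires constructing such an $(\ol X, \partial X)$ and checking independence of the choice. In the present setting it is available as a black box from \cite{BPO2} and is the formula highlighted in the remark on resolution of singularities in the introduction.

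Putting the three ingredients together, since $\SH(k)_{\geq i}^\eff$ is generated under colimits by the objects $\Sigma^{2i+n,i}\Sigma_{\P^1}^\infty X_+$ for $X\in \Sm/k$ and $n\in\Z$, we obtain
\[
\omega^*(\Sigma^{2i+n,i}\Sigma_{\P^1}^\infty X_+)\simeq \Sigma^{2i+n,i}\Sigma_{\P^1}^\infty(\ol X,\partial X)_+\in \logSH(k)_{\geq i}^\eff,
\]
since $(\ol X,\partial X)\in \SmlSm/k$. This handles the effective case. The very effective variant is parallel: the subcategory of those $E\in\SH(k)$ with $\omega^*E\in \logSH(k)_{\geq i}^\veff$ is closed under colimits (preserved by $\omega^*$) and under extensions (since $\omega^*$ preserves cofiber sequences as an exact functor between stable $\infty$-categories), and it contains the generators $\Sigma_{\P^1}^i\Sigma_{\P^1}^\infty X_+$ for $X\in \Sm/k$, which suffices.
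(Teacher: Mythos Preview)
Your proposal is correct and follows essentially the same route as the paper: reduce to generators using that $\omega^*$ preserves colimits, and then invoke the compactification formula $\omega^*\Sigma^\infty X_+\simeq \Sigma^\infty(\ol X,\partial X)_+$ coming from resolution of singularities. The paper is terser, citing \cite[Theorem 1.1(2),(4)]{Par} for the compactification formula (already including the $\Sigma^{n,0}$-shift) and for colimit preservation, whereas you supply self-contained arguments for these steps and spell out the very effective case explicitly; but the strategy is the same.
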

\begin{proof}
For $X\in \Sm/k$,
there exists proper $Y\in \SmlSm/k$ such that $Y-\partial Y\cong X$ by resolution of singularities.
Due to \cite[Theorem 1.1(2)]{Par},
we have an equivalence
$\omega^* \Sigma^{n,0} \Sigma^\infty X_+ \simeq \Sigma^{n,0} \Sigma^\infty Y_+$ for every integer $n$.
To conclude,
observe that $\omega^*$ preserves colimits by \cite[Theorem 1.1(4)]{Par}.
\end{proof}

\begin{prop}
\label{K.3}
Let $k$ be a perfect field admitting resolution of singularities.
For $\cF\in \SH(k)$ and integer $i$,
there is a natural equivalence
\[
\omega^*\fil_i\cF
\simeq
\fil_i\omega^* \cF
\]
in $\logSH(k)$.
A similar result holds for the very effective version, too.
\end{prop}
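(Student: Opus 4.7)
The plan is to deduce this from the two preceding propositions (K.1 and K.2) together with the defining universal property of $\fil_i$ (respectively $\tilde{\fil}_i$), using the fact that $\omega^*$ preserves cofiber sequences.

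First, I would start from the canonical cofiber sequence
\[
\fil_i \cF \to \cF \to \fil^{i-1} \cF
\]
in $\SH(k)$. Applying $\omega^*$, which is exact (it is a right adjoint by definition, and it preserves colimits by \cite[Theorem 1.1(4)]{Par}), yields a cofiber sequence
\[
\omega^* \fil_i \cF \to \omega^* \cF \to \omega^* \fil^{i-1} \cF
\]
in $\logSH(k)$. By Proposition \ref{K.2} the first term lies in $\logSH(k)_{\geq i}^{\eff}$, and by Proposition \ref{K.1} the third term lies in $\logSH(k)_{\leq i-1}^{\eff}$. Thus this cofiber sequence is exactly of the form that characterizes the effective slice filtration of $\omega^* \cF$ in position $i$.

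More precisely, the composite $\omega^* \fil_i \cF \to \omega^* \cF$ factors uniquely through $\fil_i \omega^* \cF \to \omega^* \cF$ by the adjunction defining $\fil_i$ (since the source is in $\logSH(k)_{\geq i}^{\eff}$). To see that the resulting map $\omega^* \fil_i \cF \to \fil_i \omega^* \cF$ is an equivalence, I would compare the induced map of cofiber sequences
\[
\omega^* \fil_i \cF \to \omega^* \cF \to \omega^* \fil^{i-1} \cF
\]
and
\[
\fil_i \omega^* \cF \to \omega^* \cF \to \fil^{i-1} \omega^* \cF,
\]
both of which have the middle term $\omega^* \cF$, the left term in $\logSH(k)_{\geq i}^{\eff}$, and the right term in $\logSH(k)_{\leq i-1}^{\eff}$. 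Any mapping space from an object in $\logSH(k)_{\geq i}^{\eff}$ to an object in $\logSH(k)_{\leq i-1}^{\eff}$ vanishes, so the triangle on the right forces the induced map on the right terms to be an equivalence (and then the two out of three property gives the equivalence on the left). This yields $\omega^* \fil_i \cF \simeq \fil_i \omega^* \cF$, and naturality in $\cF$ is clear from the construction.

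The very effective version is verbatim the same argument, replacing $\logSH(k)^{\eff}_{\geq i}$ (resp.\ $\leq i$) by $\logSH(k)^{\veff}_{\geq i}$ (resp.\ $\leq i$), and invoking the ``very effective'' parts of Propositions \ref{K.1} and \ref{K.2}. No step here is a genuine obstacle: the entire content has already been packaged in Propositions \ref{K.1} and \ref{K.2}, and the remaining work is just the standard ``two out of three'' argument for characterizing the slice truncation via a fiber sequence with effective and coeffective ends. The only point requiring minor care is that $\omega^*$ preserves cofiber sequences, for which one uses the cited exactness from \cite[Theorem 1.1(4)]{Par}.
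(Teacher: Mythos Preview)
Your proposal is correct and takes essentially the same approach as the paper: both arguments use Propositions \ref{K.1} and \ref{K.2} to show that $\omega^*\fil_i\cF \to \omega^*\cF \to \omega^*\fil^{i-1}\cF$ has its outer terms in $\logSH(k)_{\geq i}^{\eff}$ and $\logSH(k)_{\leq i-1}^{\eff}$ respectively, and then invoke the uniqueness of such a decomposition. The paper phrases this by applying $\fil_i$ to the sequence and reading off $\fil_i\omega^*\fil_i\cF\simeq \omega^*\fil_i\cF$ and $\fil_i\omega^*\fil^{i-1}\cF\simeq 0$, whereas you compare two cofiber sequences and use two-out-of-three; these are equivalent formulations of the same uniqueness principle.
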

\begin{proof}
We have the natural fiber sequence
\[
\fil_i\omega^*\fil_i\cF
\to
\fil_i\omega^*\cF
\to
\fil_i\omega^*\fil^{i-1}\cF
\]
in $\logSH(k)$.
Proposition \ref{K.2} implies $\fil_i\omega^*\fil_i\cF\simeq \omega^*\fil_i\cF$,
and Proposition \ref{K.1} implies
$\fil_i\omega^*\fil^{i-1}\cF \simeq 0$.
From these,
we obtain the desired equivalence.
\end{proof}
Let now $\KGL$ denote the motivic $\P^1$-spectrum representing algebraic $K$-theory in $\logSH(k)$ constructed in \cite[Definition 6.5.6]{BPO2} (denoted $\mathbf{logKGL}$ in \emph{loc.~cit.}), and let $\MZ \in \logSH(k)$ denote the motivic $\P^1$-spectrum representing integral motivic cohomology constructed in \cite[Definition 7.7.5]{BPO2}, denoted $\mathbf{logM}\mathbb{Z}$ in \emph{loc.~cit.}). Since  $\KGL$ and $\MZ$ satisfy the property that  $\MZ \simeq \omega^* \MZ$  when $k$ admits resolution of singularities, and $\KGL \simeq \omega^* \KGL$ in general, we may unambiguously drop the prefix $\mathbf{log}$ from the notation.

\begin{thm}
\label{K.4}
Let $k$ be a perfect field admitting resolution of singularities.
Then the slice filtration and the very effective slice filtration on $\KGL\in \logSH(k)$ agree
and are complete and exhaustive.
Furthermore,
there are natural equivalences
\[
\slice_i \KGL
\simeq
\widetilde{\slice}_i \KGL
\simeq
\MZ(i)[2i]
\]
in $\logSH(k)$ for every integer $i$, where $\MZ(i)[2i]$ denotes the suspension $\Sigma^{2i,i}\MZ$.
\end{thm}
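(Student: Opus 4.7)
The plan is to bootstrap the statement from the classical computation in $\SH(k)$ through the right adjoint $\omega^*\colon \SH(k)\to \logSH(k)$. Under resolution of singularities, $\omega^*$ commutes with both forms of the slice filtration by Proposition \ref{K.3}, and the identifications $\KGL\simeq \omega^*\KGL$ and $\MZ\simeq \omega^*\MZ$ in $\logSH(k)$ are available, as recalled in the paragraph preceding the theorem.

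First I would collect the input from $\SH(k)$: by Voevodsky--Levine \cite{MR2365658} one has $\slice_i\KGL\simeq \Sigma^{2i,i}\MZ$ in $\SH(k)$, and by \cite[Lemma 2.4]{AnRoOest} the effective and very effective slice functors agree on $\KGL$, so the same formula holds for $\tilde{\slice}_i\KGL$. Since $\KGL$ is (very) effective the filtrations are trivially exhaustive in $\SH(k)$, and completeness follows from Levine's identification of the slice tower with the homotopy coniveau tower \cite[Theorem 9.0.3]{MR2365658}. Now, applying Proposition \ref{K.3} to both $\fil_i$ and $\tilde{\fil}_i$ and using $\KGL\simeq \omega^*\KGL$, one gets in $\logSH(k)$
\[
\fil_i\KGL\simeq \fil_i\omega^*\KGL\simeq \omega^*\fil_i\KGL,
\qquad
\tilde{\fil}_i\KGL\simeq \omega^*\tilde{\fil}_i\KGL,
\]
where the outer $\KGL$ lives in $\logSH(k)$ and the inner one in $\SH(k)$. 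Taking cofibers of consecutive stages and invoking $\omega^*\MZ\simeq \MZ$ together with the commutation $\omega^*\Sigma^{2i,i}\simeq \Sigma^{2i,i}\omega^*$ then yields the desired equivalences $\slice_i\KGL\simeq \tilde{\slice}_i\KGL\simeq \MZ(i)[2i]$.

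Completeness and exhaustiveness transfer similarly. Since $\omega^*$ is a right adjoint it preserves limits, and it preserves colimits by \cite[Theorem 1.1(4)]{Par}, so
\[
\lim_i \fil_i\KGL\simeq \omega^*\lim_i \fil_i\KGL\simeq 0,
\qquad
\colim_{i\to -\infty} \fil_i\KGL\simeq \omega^*\colim_{i\to -\infty} \fil_i\KGL\simeq \KGL,
\]
and the same calculation handles $\tilde{\fil}_i$.

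The main obstacle is conceptual rather than technical: one must have in place the three inputs --- the identifications $\KGL\simeq \omega^*\KGL$ and $\MZ\simeq \omega^*\MZ$ (the latter requiring resolution of singularities and relying on Propositions \ref{K.1}--\ref{K.2}), the levelwise commutation Proposition \ref{K.3}, and the classical slice computation in $\SH(k)$ --- after which the argument is a formal chase. A minor point of care is that the very effective slice category is not stable, so Proposition \ref{K.3} must be invoked in its very effective form rather than deduced from the effective statement; and one should record the equivalence $\slice_i\KGL\simeq \tilde{\slice}_i\KGL$ in $\logSH(k)$ either by transporting \cite[Lemma 2.4]{AnRoOest} through $\omega^*$ or by observing \emph{a posteriori} that both sides compute $\MZ(i)[2i]$.
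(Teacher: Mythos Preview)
Your approach is essentially identical to the paper's: transport the classical slice computation through $\omega^*$ using Proposition \ref{K.3} and the identifications $\KGL\simeq\omega^*\KGL$, $\MZ\simeq\omega^*\MZ$, then use that $\omega^*$ preserves limits and colimits to transfer completeness and exhaustiveness from $\SH(k)$. The paper cites \cite[Lemma 3.11]{RSO_hopf} for slice completeness of $\KGL$ in $\SH(k)$ rather than Levine's homotopy coniveau comparison, but either reference suffices.

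One small correction: your sentence ``since $\KGL$ is (very) effective the filtrations are trivially exhaustive in $\SH(k)$'' is not right. The spectrum $\KGL$ is $\P^1$-periodic and is \emph{not} (very) effective; its effective and very effective covers are strictly smaller (the connective $K$-theory spectrum). Exhaustiveness still holds, but for the general reason already noted in Construction \ref{slice.1}: the subcategories $\Sigma_{\P^1}^i\SH(k)^{\eff}$ exhaust $\SH(k)$, and since each $\fil_i$ preserves filtered colimits one deduces $\colim_{i\to-\infty}\fil_i E\simeq E$ for every $E$. This is a harmless slip that does not affect the rest of your argument.
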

\begin{proof}
We have equivalences
\[
\slice_i \KGL
\simeq^{(1)}
\slice_i \omega^*\KGL
\simeq^{(2)}
\omega^* \slice_i\KGL
\simeq^{(3)}
\omega^* \Sigma^{2i,i}\MZ
\simeq^{(4)}
\Sigma^{2i,i}\MZ
\]
in $\logSH(k)$,
where (1) is due to \cite[Definition 6.5.6]{BPO2},
(2) is due to Proposition \ref{K.3},
(3) is due to \cite[Theorems 6.4.2, 9.0.3]{MR2365658},
and (4) is due to \cite[Proposition 7.7.6]{BPO2}.

We also have $\fil_i \KGL \simeq \omega^*\fil_i\KGL$.
To show that the slice filtration on $\KGL\in \logSH(k)$ is complete and exhaustive,
it suffices to show that the slice filtration on $\KGL\in \SH(k)$ is complete and exhaustive since $\omega^*$ preserves colimits and limits.
The fact that $\KGL$ is slice complete can be found, e.g., in \cite[Lemma 3.11]{RSO_hopf}.

For the very effective slice of $\KGL\in \logSH(k)$,
use \cite[Lemma 2.4]{AnRoOest} and argue similarly as above.
\end{proof}

\section{HKR filtration on logarithmic Hochschild homology}

Throughout this section, we fix 
$(R,P)$ a pre-log ring,
and $S$ a quasi-compact quasi-separated scheme over $\Spec(R)$. 
In practice, 
the reader can ignore $P$ in most statements and assume that $R$ is always considered with a trivial log structure. 

Our goal is to construct the HKR filtration on $\bHH(-/R)$ and Beilinson filtrations on $\bHC^-(-/R)$ and $\bHP(-/R)$ in $\logDA(S)$,
which are the $\P^1$-stabilized versions of the corresponding filtrations on $\HH(-/R)$, $\HC^-(-/R)$, and $\HP(-/R)$, built, in the log setting, in our previous works \cite{BLPO} and \cite{BLPO2}.
We also explore their fundamental properties.

\begin{df}For a map of pre-log rings $(R,P)\to (A,M)$ and integer $i$,
we set
\[
\L\Omega_{(A,M)/(R,P)}^i:=\wedge_A^i \L_{(A,M)/(R,P)}.
\]
where $\L_{(A,M)/(R,P)}$ is Gabber's logarithmic cotangent complex \cite[\S 8]{OlsCot}, and $\wedge^i_A(-)$ is the $i$th derived exterior power. The \emph{logarithmic derived de Rham cohomology} of $(R,P)\to (A,M)$ is defined as the   total complex
\[
\L\Omega_{(A,M)/(R,P)}
:=
\Tot(\L\Omega_{(A,M)/(R,P)}^0 \to \L\Omega_{(A,M)/(R,P)}^1 \to \cdots).
\]
This admits the Hodge filtration given by
\[
\L\Omega_{(A,M)/(R,P)}^{\geq i}
:=
\Tot(0\to \cdots \to 0 \to \L\Omega_{(A,M)/(R,P)}^i \to \L\Omega_{(A,M)/(R,P)}^{i+1} \to \cdots)
\]
where $\L\Omega_{(A,M)/(R,P)}^i$ is in cohomological degree $i$  for $i\geq 0$ and $\L\Omega_{(A,M)/(R,P)}^{\geq i}
:=\L\Omega_{(A,M)/(R,P)}$ for $i<0$.
Let $\widehat{\L\Omega}_{(A,M)/(R,P)}$ be the completion of $\L\Omega_{(A,M)/(R,P)}$ with respect to the Hodge filtration,
and let $\widehat{\L\Omega}_{X/(R,P)}^{\geq n}$ be the $n$th term of the induced Hodge filtration of $\widehat{\L\Omega}_{(A,M)/(R,P)}$.
\end{df}
\begin{rmk}
By Zariski descent,
we can also define the sheaf of complexes $\L\Omega_{X/(R,P)}^i$, $\L\Omega_{X/(R,P)}$, $\L\Omega_{X/(R,P)}^{\geq i}$, $\widehat{\L\Omega}_{X/(R,P)}$, and $\widehat{\L\Omega}_{X/(R,P)}^{\geq i}$ on $X_{\Zar}$ for every log scheme $X$ over $(R,P)$.
\end{rmk}
We now list some facts that we will use in the paper.
\begin{enumerate}
\item For any map of pre-log rings $(R, P)\to (A,M)$, the logarithmic Hochschild homology 
$\HH((A,M)/(R,P))$ of \cite[Definition 5.3]{BLPO} admits a separated, descending filtration with
graded pieces $\wedge_A^i \L_{(A,M)/(R,P)}[i]$. 
This is obtained by Kan extension of the Postnikov filtration in the polynomial case. 
We shall refer to this as the log HKR filtration, see \cite[Theorem 1.1]{BLPO}.

\item The negative cyclic homology and the periodic homology $\HC^-((A,M)/(R,P))$ and $\HP((A,M)/(R,P))$ admit the complete exhaustive \emph{Beilinson filtrations} \[\Fil^{\rB}_{\geq \bullet}\HC^-((A,M)/(R,P)), \quad \Fil^{\rB}_{\geq \bullet}\HP((A,M)/(R,P))\] with graded pieces $\widehat{\L \Omega}^{\geq n}_{(A,M)/(R,P)}[2n]$ and $\widehat{\L \Omega}_{(A,M)/(R,P)}[2n]$.
The non-log case is due to Antieau \cite[Theorem 1.1]{Ant19}, while the log case is \cite[Theorem 1.1]{BLPO2}. 
If $(A,M)\in \Poly_{(R,P)}$,
then
\begin{gather*}
\Fil^{\rB}_i\HC^-((A,M)/(R,P))
:=
\tau_{\geq 2i}^\rB\tau_{\geq \bullet} \HC^-((A,M)/(R,P)),
\\
\Fil^{\rB}_i\HP((A,M)/(R,P))
:=
\tau_{\geq 2i}^\rB\tau_{\geq \bullet} \HP((A,M)/(R,P)),
\end{gather*}
where $\tau_{\geq i}^{\rB}$ denotes the truncation functor for the Beilinson $t$-structure on the filtered derived category $\DF(A)$ \cite[Definition 2.2]{Ant19}.
We obtain the filtrations for general $(A,M)$ by left Kan extension and regarding $\HC^-$ and $\HP$ as bicomplete bifiltered complexes,
see \cite[Remark 4.4]{Ant19}, or \cite[2.2]{BLPO2}.
\end{enumerate}

\begin{const}
\label{HKR.7}
Let $X$ be a log scheme.
The inclusion functor from the category of line bundles over $\ul{X}$ to the category of vector bundles over $\ul{X}$ yields the canonical morphism
\[
c_1^{\Kth}
\colon
R\Gamma_{\Zar}(\ul{X},\G_m)[1]
\to
\Kth(\ul{X}).
\]
The \emph{first Chern class for $\TC$} is the composite morphism
\[
c_1^{\TC}
\colon
R\Gamma_{\Zar}(\ul{X},\G_m)[1]
\xrightarrow{c_1^{\Kth}}
\Kth(\ul{X})
\xrightarrow{\Tr}
\TC(\ul{X})
\xrightarrow{p^*}
\TC(X),
\]
where $p\colon X\to \ul{X}$ is the morphism removing the log structure.
We similarly obtain the morphisms $c_1^{\THH}$, $c_1^{\TC^-}$, $c_1^{\TP}$, $c_1^{\HH}$, $c_1^{\HC^-}$, and $c_1^{\HP}$ to $\THH(X)$, $\TC^-(X)$, $\TP(X)$, $\HH(X)$, $\HC^-(X)$, and $\HP(X)$.

\begin{rmk}We will set the following notation for convenience when dealing with projective bundle formulas below:
For a morphism of commutative ring spectra $f\colon \cF\to \cG$ with $c\in \pi_0(\cG)$,
we will often write $fc$ (or simply $c$ if $f$ is clear from the context) for the composite morphism of spectra
\begin{equation}
\label{HKR.7.1}
fc
\colon
\cF
\xrightarrow{\simeq}
\cF\otimes_\Sph \Sph
\xrightarrow{f\otimes c}
\cG\otimes_\Sph \cG
\xrightarrow{\mu}
\cG,
\end{equation}
where $\mu$ is the multiplication.
We also consider the unit $1\in \pi_0(\cG)$.
\end{rmk}

Recall the projective bundle formula for $\Kth(\ul{X}\times \P^1)$:
The morphism of spectra
\[
(1,c_1^\Kth(\cO(1)))
\colon
\Kth(\ul{X})\oplus \Kth(\ul{X})
\to
\Kth(\ul{X}\times \P^1)
\]
is an equivalence,
where $\cO(1)\in \Pic(\ul{X}\times \P^1)\cong \pi_0(R\Gamma_{\Zar}(\ul{X},\G_m)[1])$,
and $c_1^\Kth(\cO(1))\in \pi_0(K(\ul{X}\times \P^1))$ (implicitly we are writing $1$ for the pull-back map along $\pi\colon \underline{X}\times \P^1 \to \underline{X}$).
Due to \cite[Theorem 1.5]{BM12},
we similarly have the projective bundle formula for $\TC(\ul{X}\times \P^1)$:
The morphism of spectra
\begin{equation}
\label{HKR.7.2}
(1,c_1^\TC(\cO(1)))
\colon
\TC(\ul{X})\oplus \TC(\ul{X})
\to
\TC(\ul{X}\times \P^1)
\end{equation}
is an equivalence.
Analogous results hold for $\THH$, $\TC^-$, $\TP$, $\HH$, $\HC^-$, and $\HP$.
\end{const}

\begin{prop}
\label{BMS.16}
There is an $S^1$-equivariant equivalence of filtered complexes
\begin{equation}
\label{BMS.16.1}
\Z\oplus (\Z[1])\{-1\}
\simeq
\HH(\P^1/\Z),
\end{equation}
where the filtrations on $\Z$ and $\Z[1]$ are the Postnikov filtrations.
Here, the notation $(\Z[1])\{-1\}$ means
\[
\Fil_i ((\Z[1])\{-1\})
=
\left\{
\begin{array}{ll}
\Z & \text{if $i\leq 1$},
\\
0 & \text{otherwise}.
\end{array}
\right.
\]
\end{prop}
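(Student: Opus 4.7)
The plan is to derive the equivalence from the projective bundle formula for Hochschild homology, enhanced to the filtered level via the HKR filtration, and then to read off the summands.

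First, I would invoke the projective bundle formula (the $\HH$-analog of \eqref{HKR.7.2}), which, applied to $\ul{X}=\Spec\Z$, yields an $S^1$-equivariant equivalence of spectra
\[
(1,c_1^{\HH}(\cO(1)))
\colon
\HH(\Z/\Z)\oplus \HH(\Z/\Z)
\xrightarrow{\sim}
\HH(\P^1_\Z/\Z).
\]
Since $\L_{\Z/\Z}=0$, we have $\HH(\Z/\Z)\simeq \Z$ with trivial $S^1$-action, and its HKR filtration is the Postnikov filtration on $\Z$ (concentrated in $\Fil_0^{\HKR}$).

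Second, I would upgrade this splitting to a filtered equivalence. The first component, induced by pullback along $\P^1_\Z\to\Spec\Z$, preserves HKR filtrations by functoriality. The crucial point is that $c_1^{\HH}(\cO(1))$ lifts canonically to $\pi_0\Fil_1^{\HKR}\HH(\P^1_\Z/\Z)$: under the HKR identification $\gr_1^{\HKR}\HH(\P^1_\Z/\Z)\simeq R\Gamma(\P^1,\Omega^1)[1]$, the Chern class corresponds to $\mathrm{dlog}(\cO(1))$, a generator of $H^1(\P^1,\Omega^1)\simeq\Z$. This compatibility is a direct consequence of the filtered projective bundle formula (Proposition \ref{HKR.4}), which promotes $\HH$ to a filtered $\P^1$-motivic spectrum $\bHH$. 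Combining, we obtain a filtered $S^1$-equivariant equivalence
\[
\Fil_i^{\HKR}\HH(\Z/\Z)\oplus\Fil_{i-1}^{\HKR}\HH(\Z/\Z)
\xrightarrow{\sim}
\Fil_i^{\HKR}\HH(\P^1_\Z/\Z).
\]

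Third, I would identify the two summands on the left with the asserted filtered complexes. The first summand is exactly $\Z$ with Postnikov filtration. The second summand, $\Fil_{\bullet-1}^{\HKR}\HH(\Z/\Z)$, has $\Fil_i=\Z$ for $i\leq 1$ and $0$ otherwise, matching the explicit description of $(\Z[1])\{-1\}$. The $S^1$-equivariance of the splitting is preserved because $c_1^{\HH}$ factors through the $S^1$-equivariant Chern class $c_1^{\HC^-}$ (Construction \ref{HKR.7}), while $\HH(\Z/\Z)=\Z$ carries a trivial $S^1$-action.

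The main obstacle is the filtered refinement of $c_1^{\HH}(\cO(1))$: one must verify that this class lies naturally and $S^1$-equivariantly in filtration level $1$ of the HKR filtration on $\HH(\P^1_\Z/\Z)$. This is precisely the content of the filtered $\P^1$-bundle formula for $\bHH$, so once Proposition \ref{HKR.4} is in place the present statement follows by specializing to $\ul{X}=\Spec\Z$.
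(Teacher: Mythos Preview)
Your argument is circular within the paper's logical structure. You invoke the filtered projective bundle formula (Proposition~\ref{HKR.4}, which rests on Proposition~\ref{HKR.1} via Definition~\ref{HKR.2}) to deduce the filtered splitting of $\HH(\P^1_\Z/\Z)$, but in the paper Proposition~\ref{HKR.1} is \emph{proved using} Proposition~\ref{BMS.16}: the key step there is to write $\HH(X\times\P^1)\simeq\HH(X)\otimes\HH(\P^1/\Z)$ as filtered complexes and then plug in the decomposition \eqref{BMS.16.1}. So your ``specialize Proposition~\ref{HKR.4} to $\ul{X}=\Spec\Z$'' is exactly the statement you are meant to establish. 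Your final paragraph makes this dependence explicit.

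The paper's proof avoids this by computing $\HH(\P^1/\Z)$ directly from the standard Zariski cover $\P^1=\A^1\cup\A^1$: one writes down $\HH(\Z[x])$, $\HH(\Z[x^{-1}])$, $\HH(\Z[x,x^{-1}])$ explicitly (with their Postnikov $=$ HKR filtrations), takes the fiber of the \v{C}ech diagram, and reads off the two summands $\Z$ and $\Z$ (the latter coming from $H^1(\P^1,\Omega^1)$, sitting in HKR filtration $1$). The $S^1$-equivariance is handled by the elementary observation that any $S^1$-action on $\Z\oplus\Z$ is trivial, so it suffices to check the underlying filtered equivalence.

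Your strategy could be salvaged if you gave an \emph{independent} argument that $c_1^{\HH}(\cO(1))$ lifts to $\Fil_1^{\HKR}\HH(\P^1/\Z)$ and that the resulting filtered map is an equivalence on graded pieces --- essentially computing $R\Gamma(\P^1,\Omega^i)$ directly. But that is precisely what the paper's cover argument does, only more economically and without passing through the motivic $\P^1$-spectrum machinery.
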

\begin{proof}
Every $S^1$-action on $\Z\oplus \Z$ is trivial,
so it suffices to show the claim after forgetting the $S^1$-actions. 
We have the equivalences of filtered complexes
\begin{gather*}
\Z[x]\oplus \Z[x]dx[1]
\simeq
\HH(\Z[x]),
\\
\Z[x^{-1}]\oplus \Z[x^{-1}]d(x^{-1})[1]
\simeq
\HH(\Z[x^{-1}]),
\\
\Z[x,x^{-1}]\oplus \Z[x,x^{-1}]dx[1]
\simeq
\HH(\Z[x,x^{-1}]),
\end{gather*}
where the filtrations are the Postnikov filtrations on both sides.
Consider the standard cover on $\P^1$,
and use the equivalences of complexes
\begin{gather*}
\Z
\simeq
\fib(\Z[x]\oplus \Z[x^{-1}]\to \Z[x,x^{-1}])
\\
\Z[-1]
\simeq
\fib(\Z[x]dx\oplus \Z[x^{-1}]d(x^{-1})\to \Z[x,x^{-1}]dx)
\end{gather*}
to obtain the desired equivalence.
\end{proof}

\begin{prop}
\label{HKR.8}
Let $X$ be a log scheme.
Then the morphism of spectra
\begin{equation}
\label{HKR.8.1}
(1,c_1^\TC(\cO(1)))
\colon
\TC(X)\oplus \TC(X)
\to
\TC(X\times \P^1)
\end{equation}
is an equivalence,
and this is obtained by applying $\TC(X)\otimes_\Z-$ to \eqref{BMS.16.1}.
We have similar results for $\THH$, $\TC^-$, $\TP$, $\HH$, $\HC^-$, and $\HP$.
\end{prop}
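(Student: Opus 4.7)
The plan is to reduce everything to Proposition \ref{BMS.16} via a Künneth-type external product combined with Mayer-Vietoris for the standard Zariski cover $\P^1=\A^1_x\cup \A^1_{x^{-1}}$ with overlap $\G_m$. The key simplification is that the $\P^1$ factor carries trivial log structure, so the projection $X\times \P^1 \to \P^1$ is strict and classical Künneth/base-change formulas for Hochschild-type invariants apply without genuinely logarithmic interference.

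First, for each theory $E\in \{\HH,\HC^-,\HP,\THH,\TC^-,\TP,\TC\}$, I would construct a natural external product $E(X)\otimes_\Lambda E(Y)\to E(X\times Y)$ (with $\Lambda=\Z$ in the algebraic cases and $\Lambda=\Sph$ topologically) and verify it is an equivalence whenever $Y$ is affine, by reducing to standard computations of $\HH$, $\THH$, and their refinements on $\Z[x]$, $\Z[x^{-1}]$, and $\Z[x,x^{-1}]$. Since each $E$ satisfies Nisnevich descent on $\lSch/S$, applying the external product termwise to the Mayer-Vietoris square for $\P^1$ and passing to the limit yields an equivalence
\[
E(X)\otimes_\Lambda E(\P^1)\xrightarrow{\;\simeq\;}E(X\times \P^1).
\]
This extends to the log setting the projective bundle formula of \cite[Theorem~1.5]{BM12} recalled in Construction \ref{HKR.7}, and in fact recovers it after applying the forgetful functor $p\colon X\to \underline{X}$.

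Next, I would rewrite this equivalence in the form announced in the statement via \eqref{BMS.16.1}, which identifies $\HH(\P^1/\Z)$ with $\Z\oplus(\Z[1])\{-1\}$ as an $S^1$-equivariant filtered complex; the analogous splitting of $E(\P^1)$ for each $E$ is obtained by applying $E(\Spec\Z)\otimes_\Z -$, which is legitimate because the second summand is a free $\Z$-module concentrated in a single degree with trivial $S^1$-action, so the genuine fixed-point and Tate constructions defining $\HC^-$, $\HP$, $\TC^-$, $\TP$ commute with it; the assembly $\TC$ then inherits the splitting from its defining equalizer in terms of $\TC^-$ and $\TP$. To identify the resulting splitting with $(1,c_1^E(\cO(1)))$, it suffices to check that the generator of the second summand in \eqref{BMS.16.1} corresponds under $d\log$ to the first Chern class of $\cO(1)$; this can be verified explicitly on the overlap $\G_m$, where $d\log(x)=dx/x$ represents the nontrivial class of $\HH_1(\G_m/\Z)$ appearing in the Mayer-Vietoris computation, and the matching for the other theories then follows from the naturality of $c_1^E$ under the cyclotomic/trace maps used in Construction \ref{HKR.7}.

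The main obstacle is organizing the Künneth/external-product step uniformly across all seven theories while keeping track of the filtered, $S^1$-equivariant, and cyclotomic structures needed to propagate the splitting from $\HH$ and $\THH$ up to $\TC$. Once this bookkeeping is in place, the strictness of $\P^1\to \Spec\Z$ reduces the problem to assembling known facts about Hochschild-type invariants of polynomial and Laurent polynomial algebras with the $\P^1$ Mayer-Vietoris sequence, and the identification with $(1,c_1^E(\cO(1)))$ becomes a residue calculation on $\G_m$.
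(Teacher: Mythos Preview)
Your overall strategy—use the standard cover of $\P^1$ and identify the second summand through $dx/x$ on $\G_m$—parallels the paper's, but the K\"unneth step as you state it fails for the topological theories. The external product $E(X)\otimes_\Sph E(Y)\to E(X\times Y)$ is \emph{not} an equivalence for $E=\THH$ even when $Y=\A^1$: the correct base change is $\THH(A\otimes_\Z B)\simeq \THH(A)\otimes_{\THH(\Z)}\THH(B)$, and $\THH(\Z)\not\simeq\Sph$. Hence $\THH(X)\otimes_\Sph\THH(\P^1)$ is strictly larger than $\THH(X\times\P^1)$, and the same obstruction carries over to $\TC^-$, $\TP$, $\TC$. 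The proposition is not asserting a K\"unneth of this shape; it asserts that the decomposition arises from tensoring over $\Z$ with the finite free $\Z$-module $\HH(\P^1/\Z)\simeq\Z\oplus\Z$, which is much weaker. Even if you repair the $\THH$ step by tensoring over $\THH(\Z)$ (or equivalently writing $\THH(X\times\A^1)\simeq\THH(X)\otimes_\Z\HH(\Z[x])$), passing to $\TC^-$ and $\TP$ on the affine pieces is problematic: $(-)^{hS^1}$ and $(-)^{tS^1}$ do not commute with tensoring by the infinite-rank modules $\HH(\Z[x^{\pm 1}])$, so you cannot run Mayer--Vietoris \emph{after} taking fixed points as your outline suggests.

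The paper avoids all of this. For the equivalence itself, it base-changes the known projective bundle formula \eqref{HKR.7.2} for $\ul{X}$ from \cite{BM12} along the ring map $\TC(\ul{X})\to\TC(X)$; this disposes of the log structure in one line without any log K\"unneth. For the identification with $\TC(X)\otimes_\Z(-)$ applied to \eqref{BMS.16.1}, the paper does not build an external product at all: it uses the Bass delooping $\tau_\TC$ and the boundary $\partial$ for the standard cover to rewrite $c_1^\TC(\cO(1))$ as $\partial\circ\tau_\TC(1)$ via the trace-compatibility of \cite[Theorem~9.1]{BM12}, and then cites the specific computation of \cite[pp.~1102--1103]{BM12} that $\tau_\THH\colon\THH(X)[1]\to\THH(X\times\G_m)$ is literally $\THH(X)\otimes_\Z-$ applied to $\tau_\HH\colon\HH(\Z)[1]\to\HH(\Z[x,x^{-1}])$, $1\mapsto dx/x$. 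Comparing with Proposition~\ref{BMS.16} then finishes.
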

\begin{proof}
We focus on $\TC$ since the proofs are similar. By base-change along $\TC(\ul{X}) \to \TC(X)$ from \eqref{HKR.7.2}
we get \eqref{HKR.8.1}.
Hence we reduce to the case when $X$ has a trivial log structure.

By \cite[Theorem 9.1 and p.\ 1102]{BM12},
we have the commutative diagram of spectra
\[
\begin{tikzcd}
\Kth(X)[1]\ar[r,"\tau_\Kth"]\ar[d,"\Tr"']&
\Kth(X\times \G_m)\ar[d,"\Tr"]\ar[r,"\partial"]&
\Kth(X\times \P^1)[1]\ar[d,"\Tr"]
\\
\TC(X)[1]\ar[r,"\tau_\TC"]&
\TC(X\times \G_m)\ar[r,"\partial"]&
\TC(X\times \P^1)[1],
\end{tikzcd}
\]
where $\tau_\Kth$ and $\tau_\TC$ are obtained by the Bass functor structures on $\Kth$ and $\TC$,
and the boundary morphisms $\partial$ are obtained by the standard cover of $\P^1$.
The composite $\partial \circ \tau_K$ sends $1\in \pi_0\Kth(X)$ to $c_1^\Kth(\cO(1))\in \pi_0\Kth(X\times \P^1)$,
so the composite $\partial \circ \tau_{\TC}$ sends $1\in \pi_0\TC(X)$ to $c_1^\TC(\cO(1))\in \pi_0\TC(X\times \P^1)$.
It follows that the morphism
\[
(1,\partial \circ \tau_\TC)
\colon
\TC(X)\oplus \TC(X)
\to
\TC(X\times \P^1)
\]
agrees with $(1,c_1^\TC(\cO(1)))$.

By \cite[pp.\ 1102--1103]{BM12},
we see that $\tau_\THH\colon \THH(X)[1]\to \THH(X\times \G_m)$ is obtained by applying $\THH(X)\otimes_{\Z}-$ to $\tau_\HH\colon \HH(\Z)[1]\to\HH(\Z[x,x^{-1}])$ that sends $1\in \pi_0\HH(\Z)$ to $dx/x\in \Omega_{\Z[x,x^{-1}]/\Z}\cong \pi_1\HH(\Z[x,x^{-1}])$.
Furthermore,
$\tau_\TC$ is obtained by applying $\TC$ to $\tau_\THH$.
Compare this with \eqref{BMS.16.1} to conclude.
\end{proof}

\begin{prop}
\label{HKR.1}
Let $(R,P)$ be a pre-log ring.
For  a quasi-compact quasi-separated log scheme $X$ over $(R,P)$ and $i\in \Z\cup \{-\infty\}$,
the projective bundle formula for $\HH(X\times \P^1/(R,P))$ restricts to natural equivalences of complexes
\begin{gather*}
\Fil^{\HKR}_i\HH(X\times \P^1/(R,P))
\simeq
\Fil^{\HKR}_i\HH(X/(R,P))
\oplus
\Fil^{\HKR}_{i-1}\HH(X/(R,P)),
\\
\Fil^{\rB}_i\HC^-(X\times \P^1/(R,P))
\simeq
\Fil^{\rB}_i\HC^-(X/(R,P))
\oplus
\Fil^{\rB}_{i-1}\HC^-(X/(R,P)),
\\
\Fil^{\rB}_i\HP(X\times \P^1/(R,P))
\simeq
\Fil^{\rB}_i\HP(X/(R,P))
\oplus
\Fil^{\rB}_{i-1}\HP(X/(R,P)).
\end{gather*}
\end{prop}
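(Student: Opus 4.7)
\emph{Plan.} My approach is to promote the projective bundle formulas of Proposition \ref{HKR.8} for $\HH$, $\HC^-$, and $\HP$ to equivalences of \emph{filtered} complexes carrying the HKR and Beilinson filtrations. I would first handle $\HH$, using the filtered equivalence \eqref{BMS.16.1} as input, and then obtain the $\HC^-$ and $\HP$ cases by passing to $S^1$-homotopy fixed points and the $S^1$-Tate construction in the filtered derived category.

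\emph{Hochschild case.} Proposition \ref{HKR.8} realizes the PBF equivalence $\HH(X/(R,P)) \oplus \HH(X/(R,P)) \xrightarrow{\simeq} \HH(X\times \P^1/(R,P))$ as $\HH(X/(R,P)) \otimes_\Z -$ applied to the equivalence $\Z \oplus (\Z[1])\{-1\} \simeq \HH(\P^1/\Z)$ from Proposition \ref{BMS.16}. I would first verify that the latter is an equivalence of filtered complexes for the HKR filtration: each of $\Z[x]$, $\Z[x^{-1}]$, and $\Z[x,x^{-1}]$ is polynomial (or Laurent polynomial, hence smooth), so its HKR filtration coincides with the Postnikov filtration of the proof of Proposition \ref{BMS.16}, and the Mayer-Vietoris fiber sequence for the standard cover of $\P^1$ is one of filtered complexes. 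Equipping $\HH(X/(R,P))$ with its HKR filtration and taking the Day convolution, the $i$-th stage of the tensor product computes to $\Fil_i^{\HKR}\HH(X/(R,P))\oplus \Fil_{i-1}^{\HKR}\HH(X/(R,P))$, since the Postnikov filtration on the first summand $\Z$ of \eqref{BMS.16.1} is concentrated in degrees $\leq 0$ while on $(\Z[1])\{-1\}$ it is shifted upward by one.

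\emph{Negative cyclic and periodic cases.} The Beilinson filtrations on $\HC^-$ and $\HP$ are obtained, by construction (cf.\ \cite{Ant19} and \cite[\S 2.2]{BLPO2}), from the HKR filtration on $\HH$ by applying $(-)^{hS^1}$ and $(-)^{tS^1}$ in the filtered derived category. Applying these $S^1$-equivariant constructions to the filtered PBF equivalence for $\HH$ obtained above, and using that the $S^1$-action on $\Z\oplus(\Z[1])\{-1\}$ is trivial (as noted in the proof of Proposition \ref{BMS.16}), yields the claimed filtered decompositions for $\HC^-$ and $\HP$ in one stroke.

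\emph{Main obstacle.} The principal technical point is verifying that the Mayer-Vietoris argument of Proposition \ref{BMS.16} produces a genuine \emph{filtered} fiber sequence rather than merely an underlying equivalence of complexes, and that this filtered equivalence is preserved under the subsequent base change $\HH(X/(R,P))\otimes_\Z -$ as well as under the $S^1$-equivariant operations used to pass from the HKR filtration on $\HH$ to the Beilinson filtrations on $\HC^-$ and $\HP$. Once the filtered lift of \eqref{BMS.16.1} is established, the decomposition of $\Fil_i$ into $\Fil_i \oplus \Fil_{i-1}$ is a direct consequence of the monoidal structure on the filtered derived category together with the concrete description of the two Postnikov filtrations on the summands.
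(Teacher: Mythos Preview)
Your overall strategy parallels the paper's, but there are two places where the argument as written does not go through.

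\textbf{Reduction to the polynomial case for $\HH$.} You correctly identify as the main obstacle that the tensor decomposition $\HH(X/(R,P)) \otimes_\Z \HH(\P^1/\Z) \simeq \HH(X\times\P^1/(R,P))$ from Proposition~\ref{HKR.8} must be promoted to an equivalence of HKR-filtered objects, but you do not actually resolve it. The HKR filtration on $\HH((A,M)/(R,P))$ for general $(A,M)$ is only \emph{defined} by left Kan extension from $\Poly_{(R,P)}$ (and then globalized by Zariski descent), and there is no a~priori reason that it matches the Day convolution along an external tensor decomposition. The paper deals with this by first reducing, via Zariski descent and left Kan extension, to $X=\Spec(A,M)$ with $(A,M)\in \Poly_{(R,P)}$; there the HKR filtration \emph{is} the Postnikov filtration, the relevant complexes are connective, and Postnikov filtrations are multiplicative under tensor products, so the filtered equivalence \eqref{HKR.1.1} follows. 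Without this reduction you have not verified the filtered compatibility you flag.

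\textbf{The Beilinson filtration is not simply $(-)^{hS^1}$ of the HKR filtration.} Applying $(-)^{hS^1}$ or $(-)^{tS^1}$ in $\DF$ to the HKR-filtered $\HH$ yields the \emph{HKR/Tate}-filtered $\HC^-$ or $\HP$, not the Beilinson-filtered versions; one must subsequently apply the Beilinson truncation $\tau^\rB_{\geq 2i}$ (this is the definition recalled just before Construction~\ref{HKR.7}). The paper makes precisely this two-step move: apply $(-)^{hS^1}$ to the $S^1$-equivariant filtered decomposition \eqref{HKR.1.1} to obtain
\[
\Fil^{\HKR}_\bullet\HC^-(X\times \P^1/(R,P))
\simeq
\Fil^{\HKR}_\bullet\HC^-(X/(R,P))
\oplus
\Fil^{\HKR}_{\bullet-1}\HC^-(X/(R,P)),
\]
and only then take $\tau^\rB_{\geq 2i}$ on both sides, using that on the shifted summand this produces $\Fil^\rB_{i-1}\HC^-(X/(R,P))$. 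Your proposal collapses these two steps into one and thereby omits the Beilinson truncation entirely.
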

\begin{proof}
We refer to \cite[\S 2.2]{BLPO2} for the category $\Poly_{(R,P)}$ of polynomial $(R,P)$-algebras.
By Zariski descent and left Kan extension,
we reduce to the case when $X:=\Spec(A,M)$ with $(A,M)\in \Poly_{(R,P)}$.
In this case,
the HKR filtration on $\HH(X/(R,P))$ is the Postnikov filtration.
Hence we have the $S^1$-equivariant equivalence of filtered complexes 
\[
\HH(X\times \P^1/(R,P))
\simeq
\HH(X/(R,P))\otimes \HH(\P^1/\Z).
\]  
Together with Proposition \ref{BMS.16},
we obtain the $S^1$-equivariant decomposition of filtered complexes
\begin{equation}
\label{HKR.1.1}
\HH(X\times \P^1/(R,P))
\simeq
\HH(X/(R,P))
\oplus
(\HH(X/(R,P))[1])\{-1\},
\end{equation}
where the filtrations on $\HH(X/(R,P))$ and $\HH(X/(R,P))[1]$ are the Postnikov filtrations.
This implies the claim for $\HH$.

By applying $(-)^{hS^1}$ on both sides of \eqref{HKR.1.1},
we obtain an equivalence of complexes induced by the homotopy fixed point spectral sequence:
\[
\Fil^{\HKR}_\bullet\HC^-(X\times \P^1/(R,P))
\simeq
\Fil^{\HKR}_\bullet\HC^-(X/(R,P))
\oplus
\Fil^{\HKR}_{\bullet-1}\HC^-(X/(R,P)).
\]
Take $\tau_{\geq 2i}^{\rB}$ on both sides to show the claim for $\HC^-$.
The claim for $\HP$ can be proven similarly.
\end{proof}

\begin{const}
\label{BMS.17} The local-to-global spectral sequence $E_2^{st}=H^s(X, \Omega^t_{X}) \Rightarrow \pi_{t-s}\HH(X)$ for $X=\P^1$, together with the decomposition of $\HH(\P^1/\Z)$ of Proposition \ref{BMS.16}, induces by restriction of  $c_1^\HH(\cO(1))\in \pi_0\HH(\P^1)$ a class
we obtain
\[
c_1^{\Hod}(\cO(1))
\in
H_{\Zar}^1(\P^1,\Omega_{\P^1/\Z}^1),
\]
that agrees with the classical first Chern class in Hodge cohomology. After pulling back,
we obtain
\[
c_1^{\Hod}(\cO(1))
\in
H_{\Zar}^1(X\times \P^1,\L\Omega_{X/(R,P)}^1)
\]
for a log scheme $X$ over a pre-log ring $(R,P)$.  
By descent and reduction to the polynomial case, we have the following $\P^1$-bundle formulas: 
the natural morphisms of complexes
\begin{gather*}
R\Gamma_{\Zar}(X,\L\Omega_{X/(R,P)}^i)
\oplus
R\Gamma_{\Zar}(X,\L\Omega_{X/(R,P)}^{i-1})[-1]
\to
R\Gamma_{\Zar}(X\times \P^1,\L\Omega_{X\times \P^1/(R,P)}^i),
\\
R\Gamma_{\Zar}(X,\widehat{\L\Omega}_{X/(R,P)}^{\geq i})
\oplus
R\Gamma_{\Zar}(X,\widehat{\L\Omega}_{X/(R,P)}^{\geq i-1})[-1]
\to
R\Gamma_{\Zar}(X\times \P^1,\widehat{\L\Omega}_{X\times \P^1/(R,P)}^{\geq i}),
\\
R\Gamma_{\Zar}(X,\widehat{\L\Omega}_{X/(R,P)})
\oplus
R\Gamma_{\Zar}(X,\widehat{\L\Omega}_{X/(R,P)})[-1]
\to
R\Gamma_{\Zar}(X\times \P^1,\widehat{\L\Omega}_{X\times \P^1/(R,P)})
\end{gather*}
induced by $1$ and $c_1^\Hod(\cO(1))$ are equivalences (see proof of \cite[Proposition 2.20]{BLMP}).

If $X$ is quasi-compact and quasi-separated,
 these morphisms are identified with the natural morphisms
\begin{gather*}
\gr^\HKR_i\HH(X/(R,P))
\oplus
\gr^\HKR_{i-1}\HH(X/(R,P))
\simeq
\gr^\HKR_i\HH(X\times \P^1/(R,P)),
\\
\gr^\rB_i\HC^-(X/(R,P))
\oplus
\gr^\rB_{i-1}\HC^-(X/(R,P))
\simeq
\gr^\rB_i\HC^-(X\times \P^1/(R,P)),
\\
\gr^\rB_i\HP(X/(R,P))
\oplus
\gr^\rB_{i-1}\HP(X/(R,P))
\simeq
\gr^\rB_i\HP(X\times \P^1/(R,P))
\end{gather*}
obtained by Proposition \ref{HKR.1}.
Indeed,
this can be shown for $\HH$ by comparing \eqref{HKR.7.1} with the composite map
\begin{align*}
& R\Gamma_{\Zar}(X,\L\Omega_{X/(R,P)}^{i-1})
\otimes_\Z \Z[-1]
\\
\to &
R\Gamma_{\Zar}(X\times \P^1,\L\Omega_{X\times \P^1/(R,P)}^{i-1})
\otimes_\Z R\Gamma_{\Zar}(X\times \P^1,\L\Omega_{X\times \P^1/(R,P)}^1)
\\
\to &
R\Gamma_{\Zar}(X\times \P^1,\L\Omega_{X\times \P^1/(R,P)}^i).
\end{align*}
A similar argument proves the claim for $\HC^-$ and $\HP$.
\end{const}

\begin{prop}
\label{HKR.6}
Let $(R,P)$ be a pre-log ring.
Then the presheaves of complexes 
\[\Fil^\HKR_i\HH(-/(R,P)),
\text{ }
\Fil^\rB_i\HC^-(-/(R,P)),\text{ and }\quad  \Fil^\rB_i\HP(-/(R,P))\]
on log schemes over $(R,P)$ are $(\P^n,\P^{n-1})$-invariant for $n>0$ and $i\in \Z\cup \{-\infty\}$.
Moreover,
the presheaves of complexes $\L\Omega_{/(R,P)}^i$, $\widehat{\L\Omega}_{/(R,P)}^{\geq i}$, and $\widehat{\L\Omega}_{/(R,P)}$ on the category of log schemes over $(R,P)$ given by
\[
X
\mapsto
R\Gamma_\Zar(X,\L\Omega_{X/(R,P)}^i),
\text{ }
R\Gamma_\Zar(X,\widehat{\L\Omega}_{X/(R,P)}^{\geq i}),
\text{ }
R\Gamma_\Zar(X,\widehat{\L\Omega}_{X/(R,P)})
\]
are $(\P^n,\P^{n-1})$-invariant for $n>0$ and $i\in \Z$.
\end{prop}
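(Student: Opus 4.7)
The plan is to reduce the entire proposition to a single cohomological vanishing on the log projective pair $(\P^n, \P^{n-1})$.

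First, observe that the HKR and Beilinson filtrations in the first assertion are complete and exhaustive, with graded pieces given (up to shift) by the log Hodge sheaves of the ``moreover'' part: $\gr^\HKR_j\HH(-/(R,P)) \simeq \L\Omega^j_{-/(R,P)}[j]$, $\gr^\rB_j\HC^-(-/(R,P)) \simeq \widehat{\L\Omega}^{\geq j}_{-/(R,P)}[2j]$, and $\gr^\rB_j\HP(-/(R,P)) \simeq \widehat{\L\Omega}_{-/(R,P)}[2j]$. If $(\P^n,\P^{n-1})$-invariance is established for the three Hodge presheaves, it propagates to every filtration level $\Fil_i$ by the standard argument: each finite stage $\Fil_i/\Fil_{i+k}$ is invariant by induction along cofiber sequences of graded pieces, and invariance of $\Fil_i \simeq \lim_k \Fil_i/\Fil_{i+k}$ then follows from completeness, since evaluating a presheaf limit at a fixed object is computed pointwise. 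Moreover, since the graded pieces of $\widehat{\L\Omega}^{\geq i}$ and $\widehat{\L\Omega}$ for their induced Hodge filtrations are themselves shifts of $\L\Omega^j$, the same argument reduces the whole proposition to the $(\P^n,\P^{n-1})$-invariance of $X \mapsto R\Gamma_\Zar(X, \L\Omega^j_{X/(R,P)})$ for $j \geq 0$ (the case $j < 0$ being trivial).

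For this remaining statement, I would use Zariski descent and left Kan extension from $\Poly_{(R,P)}$ (as recalled in \cite[\S 2.2]{BLPO2}) to reduce to $X = \Spec(A, M)$ with $(A,M) \in \Poly_{(R,P)}$, so that $\L\Omega^j_{X/(R,P)} \cong \Omega^j_{(A,M)/(R,P)}$ is concentrated in degree $0$. Since $(\P^n, \P^{n-1})$ is log smooth over $R$, the log differentials on the log-scheme product decompose as
\[
\Omega^i_{X \times (\P^n, \P^{n-1})/(R,P)} \cong \bigoplus_{a+b=i} p_1^* \Omega^a_{X/(R,P)} \otimes p_2^* \Omega^b_{\P^n}(\log \P^{n-1}).
\]
Applying $R(p_1)_*$ together with the projection formula, the $(\P^n,\P^{n-1})$-invariance of $\L\Omega^i$ is then reduced to the absolute computation
\[
R\Gamma(\P^n, \Omega^b_{\P^n}(\log \P^{n-1})) \simeq \begin{cases} R & \text{if } b = 0, \\ 0 & \text{if } b \geq 1. \end{cases}
\]

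This cohomological vanishing is the main input. For $b=0$, it is just $R\Gamma(\P^n, \mathcal{O}_{\P^n}) \simeq R$. For $b\geq 1$, I would use the Poincar\'e residue exact sequence
\[
0 \to \Omega^b_{\P^n} \to \Omega^b_{\P^n}(\log \P^{n-1}) \to \Omega^{b-1}_{\P^{n-1}} \to 0,
\]
the classical Hodge cohomology of projective space ($H^p(\P^n, \Omega^q_{\P^n}) \simeq R$ iff $p=q\leq n$), and induction on $n$: the connecting maps in the associated long exact sequence identify with cup product against the hyperplane class, giving an isomorphism between the only surviving terms and forcing the claimed vanishing. This computation underlies the $\P^1$-bundle formulas already invoked in Construction \ref{BMS.17} and is essentially what is done in \cite[proof of Proposition 2.20]{BLMP}. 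The main technical obstacle is checking that the K\"unneth decomposition of log differentials and the pushforward $R(p_1)_*$ interact well with the left Kan extension for general $X$, particularly for the completed variant $\widehat{\L\Omega}$ where Hodge completion must be seen to commute with pushforward along $p_1$; this uses completeness of the Hodge filtration together with the boundedness of $R(p_1)_*$ on $\P^n$.
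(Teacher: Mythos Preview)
Your proposal is correct and follows essentially the same route as the paper: reduce to the graded pieces via completeness and exhaustiveness of the filtrations, reduce by left Kan extension to the polynomial case $(A,M)\in\Poly_{(R,P)}$, and then invoke the $(\P^n,\P^{n-1})$-invariance of log Hodge cohomology. The only difference is that the paper simply cites \cite[Proposition 9.2.1]{BPO} for this last step, whereas you unpack it via the K\"unneth decomposition and the Poincar\'e residue sequence; your closing worry about Hodge completion is moot, since you already reduced everything to the single exterior powers $\L\Omega^j$ before that point.
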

\begin{proof} This is basically discussed in \cite[\S 7, 8]{BLPO}, forgetting the filtrations.
Let $X$ be a log scheme over $(R,P)$.
Using the completeness and exhaustiveness of the filtrations,
it suffices to show that the natural map of complexes
\[
\gr^\HKR_i\HH(X/(R,P))
\to
\gr^\HKR_i\HH(X\times (\P^n,\P^{n-1})/(R,P))
\]
and the similar maps for $\gr^\rB_i\HC^-$ and $\gr^\rB_i\HP$ are equivalences.
Furthermore,
we reduce to the case when $X=\Spec(A,M)$ with $(A,M)\in \Poly_{(R,P)}$ by left Kan extension.
Then the claim is a consequence of the $(\P^n,\P^{n-1})$-invariance of Hodge cohomology,
see e.g.\ the proof of \cite[Proposition 9.2.1]{BPO}.
\end{proof}

\begin{df}
\label{HKR.2}
For $X\in \SmlSm/S$ and $i\in \Z\cup \{-\infty\}$,
Proposition \ref{HKR.1} yields a natural equivalence of complexes
\begin{equation}
\label{HKR.2.1}
\Fil^{\HKR}_i\HH(X/R)
\simeq
\Omega_{\P^1}\Fil^{\HKR}_{i+1}\HH(X/R).
\end{equation}
Using this as bonding maps,
Proposition \ref{HKR.6} enables us to define the complete exhaustive filtration
\[
\Fil^{\HKR}_i\bHH(-/R)
:=
(\Fil^{\HKR}_i \HH(-/R),\Fil^{\HKR}_{i+1} \HH(-/R),\ldots)
\]
on $\bHH(-/R)\simeq \Fil^{\HKR}_{-\infty} \bHH(-/R)\in \logDA(S)$, where $\bHH(-/R)$ is the $\P^1$-spectrum representing (log) Hochschild homology constructed in \cite[\S 8]{BLPO}. 
We similarly define the complete exhaustive filtrations
\[
\Fil^{\rB}_i\bHC^-(-/R)
\text{ and }
\Fil^{\rB}_i\bHP(-/R).
\]
\end{df}

\begin{prop}
\label{HKR.4}
For $i\in \Z\cup \{-\infty\}$,
we have a natural equivalence of $\P^1$-spectra
\[
\Fil^{\HKR}_i \bHH(-/R)(1)[2]
\simeq
\Fil^{\HKR}_{i+1} \bHH(-/R).
\]
We have similar equivalences for $\Fil^\rB_i \bHC^-$ and $\Fil^{\rB}_i \bHP$.
\end{prop}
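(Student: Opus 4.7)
The plan is to verify the equivalence by direct unwinding of Definition \ref{HKR.2}. Recall that an object of $\logDA(S)$, presented as a $\P^1$-spectrum, can be described as a sequence of Nisnevich sheaves of complexes $(E_0, E_1, E_2, \ldots)$ on $\SmlSm/S$ equipped with bonding equivalences $E_n \xrightarrow{\simeq} \Omega_{\P^1}E_{n+1}$. Under this presentation, the $\P^1$-suspension $\Sigma_{\P^1}E = E(1)[2]$ is represented by the shifted sequence $(E_1, E_2, E_3, \ldots)$ with the same bonding structure.

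By construction in Definition \ref{HKR.2}, the $j$th level of the $\P^1$-spectrum $\Fil^{\HKR}_i \bHH(-/R)$ is the presheaf $\Fil^{\HKR}_{i+j}\HH(-/R)$, and the bonding equivalence from level $j$ to level $j+1$ is exactly the one of \eqref{HKR.2.1} produced by Proposition \ref{HKR.1}. Applying $\Sigma_{\P^1}$ to this filtered $\P^1$-spectrum amounts to re-indexing by dropping the first term, yielding a $\P^1$-spectrum whose $j$th level is $\Fil^{\HKR}_{(i+1)+j}\HH(-/R)$ with bonding maps that coincide on the nose with those defining $\Fil^{\HKR}_{i+1}\bHH(-/R)$. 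This furnishes the claimed natural equivalence.

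The arguments for $\Fil^\rB_i \bHC^-(-/R)$ and $\Fil^\rB_i \bHP(-/R)$ are identical, invoking the analogous bonding equivalences from Proposition \ref{HKR.1} for negative cyclic and periodic homology. The case $i = -\infty$ recovers the statement that $\bHH(-/R)$ is itself Bott periodic, which is implicitly built into the $\P^1$-spectrum structure of $\bHH(-/R)$.

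No substantive obstacle is expected: the content of the claim has already been packaged into Propositions \ref{HKR.1} and \ref{HKR.6}, which respectively supply the bonding equivalences at each filtration level and the $(\P^n,\P^{n-1})$-invariance needed for the filtered $\P^1$-spectra to descend to $\logDA(S)$. The argument is essentially tautological once one recognises that the filtration indexing along $i$ has been tailored to align with the $\P^1$-indexing along the spectrum levels.
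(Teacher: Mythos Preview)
Your argument is correct and follows the same approach as the paper, which simply records that the claim is a direct consequence of \eqref{HKR.2.1}. You have spelled out in detail what the paper leaves implicit: that applying $(1)[2]$ shifts the indexing of the levelwise sequence defining the $\P^1$-spectrum, and the bonding maps match by construction.
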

\begin{proof}
For $\Fil^{\HKR}_i \bHH$,
this is a direct consequence of \eqref{HKR.2.1}.
The proofs for the other cases are similar.
\end{proof}

\begin{df}
For $i\in \Z$,
Proposition \ref{HKR.6} enables us to define the $\P^1$-spectra
\begin{gather*}
\L\bOmega_{/R}^i
:=
(\L\Omega_{/R}^i,\L\Omega_{/R}^{i+1}[1],\ldots),
\\
\widehat{\L\bOmega}_{/R}^{\geq i}
:=
(\widehat{\L\Omega}_{/R}^{\geq i},\widehat{\L\Omega}_{/R}^{\geq i+1}[2],\ldots),
\\
\widehat{\L\bOmega}_{/R}
:=
(\widehat{\L\Omega}_{/R},\widehat{\L\Omega}_{/R}[2],\ldots)
\end{gather*}
in $\logDA(S)$ whose bonding maps are given by the projective bundle formulas in Construction \ref{BMS.17}.
If $S\to \Spec(R)$ is smooth,
then we omit $\L$ in $\L\bOmega$.
\end{df}

\begin{prop}
\label{HKR.5}
For $i\in \Z$,
we have natural equivalences 
\begin{gather*}
\gr^\HKR_i \bHH(-/R)
\simeq
\L\bOmega_{/R}^i[i],
\\
\gr^\rB_i \bHC^-(-/R)
\simeq
\widehat{\L\bOmega}_{/R}^{\geq i}[2i],
\\
\gr^\rB_i \bHP(-/R)
\simeq
\widehat{\L\bOmega}_{/R}[2i]
\end{gather*}
in $\logDA(S)$.
\end{prop}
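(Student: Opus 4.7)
The plan is to compute the graded pieces level-by-level in the $\P^1$-spectra, using the known calculation of the graded pieces of the HKR and Beilinson filtrations in the polynomial case, and then to identify the induced bonding maps with the ones used to define $\L\bOmega_{/R}^i$, $\widehat{\L\bOmega}_{/R}^{\geq i}$ and $\widehat{\L\bOmega}_{/R}$.

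First, by Definition \ref{HKR.2}, the $n$-th entry of $\Fil_i^{\HKR}\bHH(-/R)$ is $\Fil_{i+n}^{\HKR}\HH(-/R)$, with bonding maps supplied by the equivalence \eqref{HKR.2.1}. Since cofibers in $\logDA(S)$ of $\P^1$-spectra are computed level-wise (on representing prespectra), the $n$-th entry of $\gr_i^{\HKR}\bHH(-/R)$ is the Zariski sheaf $\gr_{i+n}^{\HKR}\HH(-/R)$. By the log HKR theorem, obtained in the polynomial case from the Postnikov filtration and extended to all log schemes by left Kan extension and Zariski descent, there is a natural equivalence $\gr_j^{\HKR}\HH(X/R)\simeq \L\Omega_{X/R}^j[j]$. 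Thus the $n$-th entry of $\gr_i^{\HKR}\bHH(-/R)$ is naturally equivalent to $\L\Omega_{-/R}^{i+n}[i+n]$, which matches the $n$-th entry $\L\Omega_{-/R}^{i+n}[n+i]$ of $\L\bOmega_{/R}^i[i]$.

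Next, I would identify the bonding maps. On the $\bHH$-side, the bonding maps of $\gr_i^{\HKR}\bHH(-/R)$ are induced, by passage to the cofiber, from the bonding maps \eqref{HKR.2.1} of $\Fil_i^{\HKR}\bHH(-/R)$, which in turn come from the projective bundle formula of Proposition \ref{HKR.1}. On the $\L\bOmega_{/R}^i$-side, the bonding maps are by definition those of Construction \ref{BMS.17}, built from $1$ and $c_1^{\Hod}(\cO(1))$. The compatibility between these two descriptions is precisely what Construction \ref{BMS.17} establishes: the $\P^1$-bundle decomposition for $\gr_i^{\HKR}\HH(X\times \P^1/(R,P))$ is identified with the Hodge-cohomological one induced by $(1,c_1^{\Hod}(\cO(1)))$. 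This yields a level-wise equivalence of $\P^1$-spectra $\gr_i^{\HKR}\bHH(-/R)\simeq \L\bOmega_{/R}^i[i]$.

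For $\bHC^-$ and $\bHP$, the argument is formally the same: the $n$-th entry of $\gr_i^{\rB}\bHC^-(-/R)$ is $\gr_{i+n}^{\rB}\HC^-(-/R)\simeq \widehat{\L\Omega}_{-/R}^{\geq i+n}[2(i+n)]$, which matches the $n$-th entry $\widehat{\L\Omega}_{-/R}^{\geq i+n}[2n][2i]$ of $\widehat{\L\bOmega}_{/R}^{\geq i}[2i]$; the corresponding graded $\bHP$-spectrum is handled analogously with $\widehat{\L\Omega}_{-/R}$ replacing $\widehat{\L\Omega}_{-/R}^{\geq i+n}$. The compatibility of bonding maps on the graded pieces is again supplied by Construction \ref{BMS.17}.

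The only genuinely non-formal ingredient is Step~3, the matching of bonding maps on the graded pieces; everything else is bookkeeping with shifts. Since Construction \ref{BMS.17} has already carried out the comparison between $c_1^{\HH}(\cO(1))$, $c_1^{\HC^-}(\cO(1))$, $c_1^{\HP}(\cO(1))$ and their Hodge-cohomology counterparts $c_1^{\Hod}(\cO(1))$, the remaining work consists of unwinding the definitions to confirm that these identifications assemble into morphisms of $\P^1$-spectra, which follows from naturality.
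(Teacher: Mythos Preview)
Your proposal is correct and takes essentially the same approach as the paper: the paper's proof is a single sentence stating that the bonding maps on both sides are compatible as observed in Construction \ref{BMS.17}, and your argument is precisely an unpacking of that sentence, verifying the levelwise identifications and then invoking Construction \ref{BMS.17} for the bonding maps.
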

\begin{proof}
The bonding maps defining the $\P^1$-spectra on both sides are compatible, 
as observed in Construction \ref{BMS.17}.
\end{proof}
We are ready to prove our main result on $\HH$ and variants.

\begin{thm}
\label{HKR.3}
Assume that $S\to \Spec(R)$ is smooth.
For $i\in \Z$,
we have
\begin{equation}\label{eq:thm.HKR.3}
\Fil_{\HKR}^i\bHH(-/R),
\;
\Fil_{\rB}^i\bHC^-(-/R),
\;
\Fil_{\rB}^i\bHP(-/R)
\in
\logDA(S)_{\leq i}^\veff.
\end{equation}
In particular,
there are canonical morphisms 
\begin{align*}
&\tilde{\fil}_n \bHH(-/R) \simeq \Sigma^{2n,n}\tilde{\fil}_0  \bHH(-/R)  \to \Fil_n^{\HKR} \bHH(-/R),\\
&\tilde{\fil}_n \bHC^-(-/R) \simeq \Sigma^{2n,n}\tilde{\fil}_0  \bHC^-(-/R)  \to \Fil_n^{\rB} \bHC^-(-/R),\\
&\tilde{\fil}_n \bHP(-/R) \simeq \Sigma^{2n,n}\tilde{\fil}_0  \bHP(-/R)  \to \Fil_n^{\rB} \bHP(-/R),
\end{align*}
\end{thm}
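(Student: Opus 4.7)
The first task is to establish the co-very-effectivity assertion \eqref{eq:thm.HKR.3}; the canonical morphisms from the slice tower then follow by applying the very effective version of Lemma~\ref{slice.2}, while the identification $\tilde{\fil}_n \simeq \Sigma^{2n,n}\tilde{\fil}_0$ is a consequence of the $\P^1$-Bott periodicity of $\bHH$, $\bHC^-$, $\bHP$ (Proposition~\ref{HKR.4} taken at $i = -\infty$).

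Write $\cF$ for one of the three motivic spectra in question. The subcategory $\logDA(S)_{\leq i}^{\veff}$ is the right orthogonal to the subcategory $\logDA(S)_{\geq i+1}^{\veff}$, which is generated by the compact objects $\Sigma^{k,0}\Sigma_{\P^1}^{i+1}\Sigma^\infty X_+$ for $X \in \SmlSm/S$ and $k \geq 0$. Consequently, $\logDA(S)_{\leq i}^{\veff}$ is closed under all small colimits, limits, and extensions. Since the filtrations $\Fil_\bullet$ are complete and exhaustive by Definition~\ref{HKR.2}, each quotient $\Fil^i\cF = \cofib(\Fil_{i+1}\cF \to \cF)$ is built (as a filtered colimit of finite iterated extensions) from the graded pieces $\gr_k\cF$ for $k \leq i$. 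It therefore suffices to show that $\gr_j\cF \in \logDA(S)_{\leq j}^{\veff}$ for every $j$.

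By Proposition~\ref{HKR.5}, the graded pieces are $\L\bOmega^j_{/R}[j]$, $\widehat{\L\bOmega}^{\geq j}_{/R}[2j]$, and $\widehat{\L\bOmega}_{/R}[2j]$. Iterating the Bott relation obtained from Proposition~\ref{HKR.4} at the graded level (together with Construction~\ref{BMS.17}) yields $\L\bOmega^{j+1}_{/R} \simeq \L\bOmega^j_{/R}(1)[1]$ and its analogs for the Hodge-truncated and completed families, so each graded piece is of the form $\Sigma^{2j,j} E$ for a single base object $E \in \{\L\bOmega^0_{/R},\, \widehat{\L\bOmega}^{\geq 0}_{/R},\, \widehat{\L\bOmega}_{/R}\}$. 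Since $\Sigma^{2j,j} = \Sigma_{\P^1}^j$ sends $\logDA(S)_{\leq 0}^{\veff}$ into $\logDA(S)_{\leq j}^{\veff}$, the problem reduces to proving that each base object lies in $\logDA(S)_{\leq 0}^{\veff}$. This last step is the technical heart of the argument: one has to verify the vanishing of $[\Sigma^{p,q}\Sigma^\infty X_+, E]$ for all $X \in \SmlSm/S$, $q \geq 1$, and $p \geq 2q$. Unwinding the $\P^1$-bundle bonding maps of Construction~\ref{BMS.17}, the cohomology of $E$ in this range is computed by Zariski hypercohomology of $X$ with coefficients in Hodge pieces $\L\Omega^j_{X/R}$ of negative Hodge degree $j < 0$, where the vanishing $\L\Omega^j = 0$ is tautological. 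The most delicate case is the Tate-periodic base $\widehat{\L\bOmega}_{/R}$ of $\bHP$, where one must invoke completeness of the Hodge filtration to reduce to this termwise vanishing.
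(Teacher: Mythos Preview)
Your strategy is essentially the paper's: reduce to graded pieces (using that $\logDA(S)^{\veff}_{\leq i}$ is closed under filtered colimits and extensions, which holds because the generators $\Sigma_{\P^1}^{i+1}\Sigma^\infty X_+$ of $\logDA(S)^{\veff}_{\geq i+1}$ are compact), invoke the Bott periodicity of Proposition~\ref{HKR.4}, and verify a Zariski-cohomological vanishing. The paper merely swaps the order of the first two steps, applying Proposition~\ref{HKR.4} at the level of $\Fil^i$ to reduce to $i=-1$ before passing to graded pieces.

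The final paragraph, however, misidentifies the vanishing needed for $\bHC^-$ and $\bHP$. For $\bHH$ your description is exact: unwinding the bonding maps gives $\map(\Sigma_{\P^1}\Sigma^\infty X_+,\L\bOmega^0_{/R})\simeq R\Gamma_{\Zar}(X,\Omega^{-1}_{X/R})[-1]=0$ since $\Omega^{-1}=0$. But for the base objects $E=\widehat{\L\bOmega}^{\geq 0}_{/R}$ and $E=\widehat{\L\bOmega}_{/R}$, the same unwinding gives
\[
\map(\Sigma_{\P^1}\Sigma^\infty X_+,E)\simeq R\Gamma_{\Zar}(X,\Omega_{X/R}^{\geq -1})[-2]= R\Gamma_{\Zar}(X,\Omega_{X/R})[-2],
\]
and the required vanishing is $H_{\Zar}^{-2j}(X,\Omega_{X/R}^{\geq 0})=0$ for $j>0$, i.e., negative-degree hypercohomology of a complex concentrated in degrees $\geq 0$. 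This is precisely what the paper checks. Your proposed detour through Hodge completeness does not reduce this to ``$\L\Omega^{j}=0$ for $j<0$'': the Hodge-graded pieces of $\widehat{\L\bOmega}_{/R}$ are $\L\bOmega^m_{/R}[-m]\simeq \L\bOmega^0_{/R}(m)$ for $m\geq 0$, and showing each of these lies in $\logDA(S)^{\veff}_{\leq 0}$ again comes down to the same negative-degree Zariski vanishing, not to the triviality of negative wedge powers.
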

\begin{proof}  We only need to show the first claim, namely, the statement of  \eqref{eq:thm.HKR.3}, since the remaining claims are a direct consequence of  Lemma \ref{slice.2}. 
By Proposition \ref{HKR.4},
we reduce to the case when $i=-1$.
Since $\Sigma^\infty X_+$ is compact in $\logDA(S)$,
it suffices to show the vanishing
\[
\Hom_{\logDA(S)}(\Sigma^\infty X_+,\gr^{\HKR}_{-j}\bHH(-/R))
\simeq
0
\]
for every integer $j>0$ and similar vanishings for $\gr^{\rB}_{-j}\bHC^-$ and $\gr^{\rB}_{-j}\bHP$.
By the smoothness assumption and Proposition \ref{HKR.5},
it suffices to show the vanishings
\[
H_{\Zar}^{-j}(X,\Omega_{X/R}^{-j})=0
\text{ and }
H_{\Zar}^{-2j}(X,\Omega_{X/R}^{\geq 0})=0
\]
for $j>0$.
This is clear.
\end{proof}

\begin{rmk}
Since $H_{Zar}^{2j}(X,\Omega_{X/R}^{\geq 0})$ does not vanish for $j\geq 0$ and nontrivial $X$, 
the proof of Theorem \ref{HKR.3} shows that $\Fil_\rB^i \bHC^-(-/R)$ and $\Fil_\rB^i \bHP(-/R)$ are not in $\logDA(S)_{\leq i}^\eff$.
On the other hand,
we have $\Fil_\HKR^i\bHH(-/R)\in \logDA_{\leq i}^\eff(S)$.
\end{rmk}

\begin{proof}[Proof of Theorem \ref{intro.2}]
(1) is a consequence of Lemma \ref{slice.2} and Theorems \ref{K.4} and \ref{HKR.3}.
(2) is a consequence of (1) and Proposition \ref{HKR.5},
and (3) is a consequence of (2).
\end{proof}
\begin{quest}Despite having a very different origin, 
Theorem \ref{HKR.3} exhibits a connection between the (very effective) slice filtration and the HKR filtration (and the Beilinson filtrations). We leave it as an open question whether the (very effective) slice and HKR filtrations agree on $\bHH(-/(R,P))$, and whether 
the slice and Beilinson filtrations agree on $\bHC^-(-/(R,P))$ and $\bHP(-/(R,P))$.
\end{quest}

\section{BMS filtration on logarithmic topological Hochschild homology}
Let us fix a prime $p$. 
We refer the reader to  \cite{BLPO2} for the definitions and the main properties of the category $\lQSyn$ of log quasi-syntomic rings,
the category $\lQRSPerfd$ of log quasi-regular semiperfectoid rings,
and the log quasi-syntomic topology on $\lQSyn$.
\begin{df} Let $\mathfrak{X}$ be a quasi-coherent
bounded $p$-adic formal log scheme. We say that $\mathfrak{X}$ is \emph{log quasi-syntomic} if strict \'etale locally it is isomorphic to $\Spf(A,M)^{a}$ for $(A,M)\in \lQSyn$.  
Let $\FlQSyn$ denote the category of quasi-compact quasi-separated log quasi-syntomic formal log schemes, as defined in \cite[Definition 3.1]{BLMP}: this is a log variant of the category of qcqs quasi-syntomic $p$-adic formal schemes of \cite{BL22} (i.e., qcqs $p$-adic formal schemes which are locally of the form $\Spf(R)$ for a $p$-adic quasi-syntomic ring $R$).
\end{df}

Our goal is to define the BMS filtrations on the motivic $\P^1$-spectra representing (log) $\THH$, $\TC^-$, $\TP$, and $\TC$, defined in \cite[Section 8]{BPO2} and explore their fundamental properties.

\begin{prop}
\label{BMS.4}
The presheaves $\THH(-;\Z_p)$, $\THH(-;\Z_p)_{hS^1}$, $\TC(-;\Z_p)^-$, and $\TP(-;\Z_p)$ on $\lQSyn^{op}$ are log quasi-syntomic sheaves.
\end{prop}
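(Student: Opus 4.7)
The plan is to adapt the descent arguments of Bhatt--Morrow--Scholze \cite{BMS19} to the logarithmic setting, using the log quasi-syntomic topology on $\lQSyn$ developed in \cite{BLPO2}. The four presheaves are packaged together because, once descent for $\THH(-;\Z_p)$ is in hand, the others follow in a mostly formal way from $S^1$-functoriality.

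First, I would reduce to proving log quasi-syntomic descent for $\THH(-;\Z_p)$ alone. The assertions for $\TC^-(-;\Z_p)=\THH(-;\Z_p)^{hS^1}$ and $\TP(-;\Z_p)=\THH(-;\Z_p)^{tS^1}$ then follow formally: both the homotopy fixed points and the Tate construction with respect to $S^1$ commute with the cosimplicial totalizations that define \v{C}ech descent, so they send sheaves to sheaves. For the homotopy orbits $\THH(-;\Z_p)_{hS^1}$, I would use the norm cofiber sequence
\[
\THH(-;\Z_p)_{hS^1}\to \THH(-;\Z_p)^{hS^1}\to \THH(-;\Z_p)^{tS^1}
\]
(in its correct shifted form for $G=S^1$) to recover descent from the two outer terms, using that fibers of sheaves are sheaves.

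To prove the descent statement for $\THH(-;\Z_p)$, I would restrict to the basis $\lQRSPerfd\subset \lQSyn$ of log quasi-regular semiperfectoid rings. By the construction of the log quasi-syntomic topology in \cite{BLPO2}, every object of $\lQSyn$ admits a log quasi-syntomic cover by some $(R,M)\in \lQRSPerfd$, and the \v{C}ech nerve of a $p$-completely faithfully flat map in $\lQRSPerfd$ stays in $\lQRSPerfd$, so it suffices to verify descent for such \v{C}ech nerves. On $\lQRSPerfd$, the log analog of the BMS structure theorem should give that $\THH(-;\Z_p)$ is concentrated in even non-negative degrees with $p$-completely flat homotopy groups; granted this, \v{C}ech descent for the spectrum $\THH(-;\Z_p)$ reduces to classical $p$-complete faithfully flat descent for the individual homotopy modules.

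The main obstacle I anticipate is precisely the concentration-and-flatness statement on $\lQRSPerfd$. In the non-logarithmic case, this combines B\"okstedt's computation of $\THH$ of perfectoid rings with quasi-syntomic descent; the logarithmic analog requires integrating the monoid structure into this picture, and leaning on the log prismatic framework of \cite{BLMP} together with the log $\THH$ computations from \cite{BLPO2}. Once these inputs are assembled, the sheaf property claimed in the proposition is formal; the bulk of the work is in verifying that the log-theoretic analog of the BMS even concentration result for $\THH(-;\Z_p)$ holds over $\lQRSPerfd$.
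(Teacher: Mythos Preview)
Your overall strategy---reduce to a basis of log quasiregular semiperfectoid rings and exploit even concentration with $p$-completely flat homotopy there---is the right engine, and it is essentially how the paper (via the cited results of Mathew and \cite{BLPO2}) proceeds. However, your reduction step contains a genuine gap.

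The claim that the Tate construction $(-)^{tS^1}$ commutes with cosimplicial totalizations is not correct in general. While $(-)^{hS^1}$ is a limit and therefore commutes with the \v{C}ech totalization, $(-)^{tS^1}$ is the cofiber of the norm map $\Sigma\,\THH(-;\Z_p)_{hS^1}\to \THH(-;\Z_p)^{hS^1}$, and the homotopy orbits term is a \emph{colimit} over $BS^1$. There is no formal reason for this colimit to commute with the totalization defining descent. Consequently, your deduction of $\TP$-descent from $\THH$-descent is unjustified, and then your argument for $\THH_{hS^1}$ via the norm sequence becomes circular: you would need $\TP$ to be a sheaf to conclude that $\THH_{hS^1}$ is, but you were trying to obtain $\TP$ from $\THH_{hS^1}$ and $\TC^-$.

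The fix is already implicit in your own final paragraph: rather than deducing $\TC^-$, $\TP$, $\THH_{hS^1}$ formally from $\THH$, run the even-concentration-plus-flatness argument for \emph{each} of them on $\lQRSPerfd$. Once $\THH(-;\Z_p)$ is known to be even with $p$-completely flat homotopy on $\lQRSPerfd$, the homotopy fixed point, homotopy orbit, and Tate spectral sequences all degenerate, so $\TC^-$, $\THH_{hS^1}$, and $\TP$ are likewise even with $p$-completely flat homotopy groups there; \v{C}ech descent for each then reduces to $p$-complete faithfully flat descent on homotopy modules, exactly as you outline for $\THH$. This is the content behind the references the paper invokes (Mathew's argument together with the $p$-completed flat descent input for log $\THH$ from \cite[Theorem~2.3]{BLPO2}, and compare \cite[Remark~4.9]{BMS19}).
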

\begin{proof}
Argue as in \cite[Theorem 5.5]{zbMATH07729746}, but use the $p$-completed version of \cite[Theorem 2.3]{BLPO2} as \cite[Remark 4.9]{BMS19}.
See \cite[Proposition 3.10]{BLPO2} for the non $p$-completed version.
\end{proof}
In particular, for every $\kX \in \FlQSyn$, we can consider $\THH(\kX;\Z_p)$ and its cousins. 
\begin{rmk}
\label{BMS.19}

Given any $p$-complete ring $R$, we can consider the canonical $p$-completion functor $\lSm/R\to \mathrm{FlSm}/R$ from log smooth log schemes over $R$ to formal log smooth fs log schemes over $\Spf(R)$ as defined in \cite[Definition 2.2]{BLMP}.
Since for any  commutative ring $A$  we have an equivalence $\THH(A^\wedge_p;\Z_p) \simeq \THH(A;\Z_p)$, the functor $\THH(-;\Z_p)$ commutes with the $p$-completion functor. 
For this reason, we will implicitly extend the $\P^1$-spectrum $\bTHH$ in $\logSH(R)$ of \cite[\S 8]{BPO2} (denoted $\mathbf{logTHH}$ in loc.\ cit.) to a $\P^1$-spectrum, indicated with the same letters, in $\mathrm{logFSH}(R)$, where $\mathrm{logFSH}(R)$ is the category of log formal log motives,  constructed as in \cite[Definition 2.6]{BLMP}, using the strict Nisnevich topology and sheaves of spectra. For the same reason, we will implicitly consider the sheaf $\THH(-;\Z_p)$ and variants as defined on (log) schemes or on $p$-adic formal (log) schemes without explicitly mentioning it.
\end{rmk}

Let us quickly review how we can extend the BMS filtrations in \cite{BMS19} to pre-log rings,
see \cite{BLPO2}.
For $(A,M)\in \lQRSPerfd$,
the \emph{BMS filtrations on}
\[
\THH((A,M);\Z_p),
\;
\TC^-((A,M);\Z_p),
\;
\TP((A,M);\Z_p),
\;
\TC((A,M);\Z_p)
\]
are the double-speed Postnikov filtrations,
that is, $\Fil_\BMS^i:=\tau_{\leq 2i}$.
Equivalently, we have $\Fil^\BMS_i:=\tau_{\geq 2i-1}$.
By log quasi-syntomic descent,
we obtain the \emph{BMS filtrations} for $(A,M)\in \lQSyn$ too.
Observe that we have a natural equivalence of spectra
\begin{equation}
\label{BMS.0.1}
\gr^{\BMS}_i \TC((A,M);\Z_p)
\simeq
\fib(\gr^{\BMS}_i \TC^-((A,M);\Z_p)\xrightarrow{\varphi-\can} \gr^{\BMS}_i\TP((A,M);\Z_p)).
\end{equation}
For $\kX\in \FlQSyn$,
we also obtain the BMS filtrations on
\[
\THH(\kX;\Z_p),
\;
\TC^-(\kX;\Z_p),
\;
\TP(\kX;\Z_p),
\;
\TC(\kX;\Z_p)
\]
by Zariski descent. Note that the graded pieces satisfy quasi-syntomic descent as discussed in \cite[\S 3]{BLMP}.

For a $p$-adic formal scheme $\kX$ over $\Spf(R)$ for a quasi-syntomic ring $R$, 
let $\kX\times \P^n$ denote the $p$-adic formal scheme $\kX\times (\P^n_{\Spf(R)})_p^\wedge$ to simplify notation.

\begin{prop}
\label{BMS.14}
The presheaves 
\[\Fil^{\BMS}_i\THH(-;\Z_p),
\text{ }
\Fil^{\BMS}_i\TC^-(-;\Z_p),
\text{ and } \Fil^{\BMS}_i\TP(-;\Z_p)\] on $\FlQSyn$ are $(\P^n,\P^{n-1})$-invariant for $n\in \Z_{>0}$ and $i\in \Z\cup \{-\infty\}$.
\end{prop}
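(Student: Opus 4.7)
The plan is to mirror the strategy used for Proposition \ref{HKR.6}, substituting log prismatic cohomology for Hodge cohomology. First, since all three BMS filtrations are complete and exhaustive (obtained by log quasi-syntomic descent from the $\lQRSPerfd$ case, where the filtration is simply the double-speed Postnikov filtration), the $(\P^n, \P^{n-1})$-invariance of each filtered piece $\Fil^{\BMS}_i$ reduces to the same invariance for each associated graded piece $\gr^{\BMS}_i$, $i \in \Z$. The underlying unfiltered case $i = -\infty$ is already known, since $\bTHH$, $\bTC^-$, and $\bTP$ are constructed as $\P^1$-spectra in \cite[\S 8]{BPO2}.

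Next, I would use log quasi-syntomic descent (Proposition \ref{BMS.4}, together with the descent for graded pieces noted in \cite[\S 3]{BLMP}) to reduce the problem for each graded piece to the affine situation $\kX = \Spf(A, M)^{a}$ with $(A, M) \in \lQRSPerfd$. In that case, the double-speed Postnikov definition combined with the identifications from \cite{BLPO2, BLMP} expresses each $\gr^{\BMS}_i$ (up to a shift by $[2i]$) as a log (Nygaard-filtered) prismatic-type complex attached to $(A,M)$.

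The final step is to invoke the $(\P^n, \P^{n-1})$-invariance of these log prismatic complexes, playing here the role that $(\P^n, \P^{n-1})$-invariance of Hodge cohomology played at the end of the proof of Proposition \ref{HKR.6}. This invariance is established in \cite{BLMP}, where motivic $\P^1$-spectra representing (Nygaard-filtered) log prismatic cohomology are constructed; one then further reduces by left Kan extension to the smooth / polynomial case, exactly as in the Hodge argument.

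The main potential obstacle is ensuring that the Nygaard-filtered piece satisfies $(\P^n, \P^{n-1})$-invariance, since the Nygaard filtration interacts nontrivially with the projective bundle decomposition. However, this subtlety has already been handled in \cite{BLMP} in the course of constructing the motivic prismatic spectrum, so once the identifications of the graded pieces are in place the desired invariance follows by direct citation rather than a new calculation.
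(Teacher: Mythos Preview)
Your proposal is correct and follows essentially the same approach as the paper: reduce to the graded pieces via completeness and exhaustiveness of the BMS filtration, then invoke the $(\P^n,\P^{n-1})$-invariance of the log prismatic complexes established in \cite{BLMP} (the paper cites \cite[Theorem 1.3(3),(4)]{BLPO2} and \cite[Corollary 3.11]{BLMP} directly). Your intermediate reduction to $\lQRSPerfd$ via log quasi-syntomic descent is unnecessary since those references already apply at the level of $\FlQSyn$, and the remark about ``left Kan extension to the polynomial case'' is a slight slip carried over from the HKR argument---in the prismatic setting the simplifying reduction is descent to $\lQRSPerfd$, not Kan extension from polynomials---but this does not affect the validity of your argument.
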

\begin{proof}
Using the completeness and exhaustiveness of the filtrations,
it suffices to show that the natural map of spectra
\[
\gr^\BMS_{i}\THH(\kX;\Z_p)
\to
\gr^\BMS_{i}\THH(\kX\times (\P^n,\P^{n-1});\Z_p)
\]
and the similar maps for $\TC^-$, $\TP$, and $\TC$ are equivalences.
This is a consequence of \cite[Theorem 1.3(3),(4)]{BLPO2} and \cite[Corollary 3.11]{BLMP}.
\end{proof}

\begin{prop}
\label{BMS.7}
For $(A,M)\in \lQSyn$ and integer $i<0$,
we have the vanishing
\[
\Fil_\BMS^i \TC((A,M);\Z_p)\simeq 0.
\]
\end{prop}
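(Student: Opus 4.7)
The plan is to reduce via log quasi-syntomic descent to the case $(A,M)\in\lQRSPerfd$, and then to rephrase the vanishing as a connectivity estimate on $\TC((A,M);\Z_p)$. For $(A,M)\in\lQRSPerfd$, the BMS filtration on $\TC((A,M);\Z_p)$ is by construction the double-speed Postnikov filtration, so $\Fil_\BMS^i\TC((A,M);\Z_p)\simeq \tau_{\leq 2i}\TC((A,M);\Z_p)$, and the claim for $i<0$ becomes the statement that $\pi_n\TC((A,M);\Z_p)=0$ for all $n\leq -2$.

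For the descent step, Proposition \ref{BMS.4} together with the definition of the BMS filtration by log quasi-syntomic descent implies that both $\Fil^\BMS_\bullet\TC(-;\Z_p)$ and its cofiber $\Fil_\BMS^\bullet\TC(-;\Z_p)$ are log quasi-syntomic sheaves. Choosing a log quasi-syntomic hypercover $(A,M)\to(A^\bullet,M^\bullet)$ with each $(A^j,M^j)\in\lQRSPerfd$ therefore reduces the vanishing on general $(A,M)\in\lQSyn$ to the qrsp case.

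To establish the connectivity estimate in the qrsp case, apply the long exact sequence on homotopy to the fiber sequence $\TC((A,M);\Z_p)=\fib(\TC^-\xrightarrow{\varphi-\can}\TP)$. By the log analog of \cite[Proposition 4.20]{BMS19} established in \cite{BLPO2}, $\THH((A,M);\Z_p)$ is concentrated in non-negative even degrees, which by the homotopy fixed point and Tate spectral sequences forces $\pi_*\TC^-((A,M);\Z_p)$ and $\pi_*\TP((A,M);\Z_p)$ to vanish in odd degrees. The long exact sequence then reduces the vanishing $\pi_n\TC=0$ for $n\leq -2$ to the statement that
\[
\varphi-\can\colon \pi_{2k}\TC^-((A,M);\Z_p)\to \pi_{2k}\TP((A,M);\Z_p)
\]
is an isomorphism for every $k\leq -1$. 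Under the BMS identifications in the log setting, this becomes a map $\cPrism_{(A,M)}\{k\}\to\cPrism_{(A,M)}\{k\}$ of negatively twisted log prismatic cohomology, and the required isomorphism is the log analog of the vanishing of syntomic cohomology $\Z_p(k)$ in negative weights on qrsp rings. The main obstacle is therefore verifying this negative-twist vanishing in the log prismatic setup of \cite{BLMP}, parallel to the non-log argument; everything else reduces to routine bookkeeping with the long exact sequence and descent.
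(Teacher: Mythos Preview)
Your descent step and the reformulation as the connectivity estimate $\pi_n\TC((A,M);\Z_p)=0$ for $n\le -2$ are correct and match the paper. The divergence is in how you establish that estimate.

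The paper does not go through the fiber sequence $\TC\to\TC^-\to\TP$ and negative-weight syntomic vanishing. It simply invokes the general fact (Nikolaus--Scholze, cited as \cite[Theorem II.4.10]{zbMATH07009201}) that $\TC$ of a bounded-below (in particular connective) cyclotomic spectrum is $(-1)$-connective, applied exactly as in \cite[Proposition 7.16]{BMS19}. Since $\THH((A,M);\Z_p)$ is the $p$-completed $\THH$ of a connective $\E_\infty$-ring (the log construction produces a connective ring spectrum), this yields $\pi_n\TC((A,M);\Z_p)=0$ for $n<-1$ immediately, with no log-specific input beyond connectivity of the underlying $\THH$.

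Your route is not wrong in outline, but it is longer and the ``main obstacle'' you flag is a genuine one: in the standard references the vanishing of $\Z_p(k)$ for $k<0$ on quasiregular semiperfectoid rings is itself deduced from the $(-1)$-connectivity of $\TC$ (this is how \cite[Proposition 7.16]{BMS19} is organized), so invoking it here risks circularity unless you supply an independent prismatic argument that $\varphi-\can$ is an isomorphism on $\cPrism_{(A,M)}\{k\}$ for $k\le -1$. That independent argument is not routine and is not in the papers you cite. The cleaner fix is to bypass the fiber-sequence analysis entirely and appeal to the Nikolaus--Scholze connectivity bound, which requires nothing about evenness, Nygaard filtrations, or the log prismatic comparison.
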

\begin{proof} By (log) quasi-syntomic descent, we reduce to the case  $(A,M)\in \lQRSPerfd$.
To conclude,
observe that we have $\pi_i\TC((A,M);\Z_p)\simeq 0$ for $i<-1$ using \cite[Theorem II.4.10]{zbMATH07009201} as in the proof of \cite[Proposition 7.16]{BMS19}.
\end{proof}

\begin{const}
\label{BMS.15}Let $A$ be a quasi-syntomic ring.
Apply the \'etale sheafification and $p$-completion functors to the first Chern class $c_1^\TC\colon R\Gamma_{\Zar}(A,\G_m)[1]\to \TC(A)$ to obtain the natural morphism
\[
\widehat{c}_1^\TC
\colon
R\Gamma_{\et}(A,\G_m)_p^\wedge[1]
\to
\TC(A;\Z_p).
\]
Apply $\pi_2(-)[2]$ to this to obtain the morphism
\[
T_p(A^\times)[2]
\to
(\pi_2\TC(A;\Z_p))[2],
\]
where $T_p(A^\times)$ is the $p$-adic Tate module of the abelian group $A^\times$. The \'etale sheafification of the left-hand side is $R\Gamma_{\et}(A,\G_m)_p^\wedge[1]$, and the quasi-syntomic sheafification of the right-hand side is $R\Gamma_{\syn}(A,\Z_p(1))[2]$ by \cite[Proposition 7.17]{BMS19}.
Hence
we obtain the induced natural morphism
\[
\widehat{c}_1^{\syn}
\colon
R\Gamma_{\et}(A,\G_m)_p^\wedge[1]
\to
R\Gamma_{\syn}(A,\Z_p(1))[2]
\]
for every quasi-syntomic ring $A$.
This agrees with $\widehat{c}_1^{\syn}$ in \cite[Theorem 7.5.6]{BL22} up to shift for quasi-regular semiperfectoid $\Z_p^\mathrm{cycl}=\Z_p[\mu_{p^\infty}]^\wedge_p$-algebra by comparing \cite[Theorem 6.7]{zbMATH07237241} and \cite[Notation 7.5.1]{BL22}, for quasi-regular semiperfectoid ring by quasi-syntomic descent for the cover $A\to A\widehat{\otimes}_{\Z_p} \Z_p^\mathrm{cycl}$,
and for quasi-syntomic ring by quasi-syntomic descent again.

For a quasi-regular semiperfectoid ring $A$ such that $A$ is $w$-local in the sense of \cite[Definition 1.6]{BSproet} (in particular any
Zariski cover of $\Spec{A}$ is split) and $A^\times$ is $p$-divisible (such rings form a base for the quasi-syntomic topology in light of the proof of \cite[Proposition 7.17]{BMS19}),
we see that $T_p(A^\times)[2]\simeq R\Gamma_{\et}(A,\G_m)_p^\wedge[1]$
is concentrated in degree $2$.
Hence for such a ring $A$,
we have a natural equivalence $\tau_{[1,2]}\widehat{c}_1^{\TC}\simeq \widehat{c}_1^{\syn}$.
In particular,
we obtain the natural commutative diagram
\[
\begin{tikzcd}
&
T_p(A^\times)[2]\ar[ld,"\widehat{c}_1^{\syn}"']\ar[d]\ar[rd,"\widehat{c}_1^\TC"]
\\
\tau_{[1,2]}\TC(A;\Z_p)\ar[r,leftarrow]&
\tau_{\geq 1}\TC(A;\Z_p)\ar[r]&
\TC(A;\Z_p).
\end{tikzcd}
\]
For $\kX\in \FlQSyn$,
we obtain the natural commutative diagram
\begin{equation}
\label{BMS.15.1}
\begin{tikzcd}
&
R\Gamma_{\et}(\kX,\G_m)_p^\wedge{[1]}\ar[ld,"\widehat{c}_1^{\syn}"']\ar[d]\ar[rd,"\widehat{c}_1^\TC"]
\\
R\Gamma_{\syn}(\kX,\Z_p(1)){[2]}\ar[r,leftarrow]&
\Fil^{\BMS}_1\TC(\kX;\Z_p)\ar[r]&
\TC(\kX;\Z_p)
\end{tikzcd}
\end{equation}
by quasi-syntomic descent.
\end{const}

For any $p$-adically complete ring $A$, write $A\langle x\rangle$ for the $p$-adic completion of the polynomial ring $A[x]$. 

\begin{lem}
\label{BMS.6}
For $(A,M)\in \lQRSPerfd$, the BMS filtrations on $\THH((A\langle x\rangle ,M);\Z_p)$ and $\THH((A\langle x,x^{-1}\rangle,M);\Z_p)$ are the double-speed Postnikov filtrations.
Similar results hold for $\TC^-$ and $\TP$ too.
\end{lem}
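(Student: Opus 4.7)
The plan is to descend from $\lQRSPerfd$ along a carefully chosen log quasi-syntomic cover. First I would verify that the pre-log rings $(A\langle x^{1/p^\infty}\rangle, M)$ and $(A\langle x^{1/p^\infty}, x^{-1/p^\infty}\rangle, M)$ still lie in $\lQRSPerfd$: the base rings $\Z_p\langle x^{1/p^\infty}\rangle$ and $\Z_p\langle x^{\pm 1/p^\infty}\rangle$ are perfectoid, so the $p$-completed tensor products $A\langle x^{1/p^\infty}\rangle$ and $A\langle x^{\pm 1/p^\infty}\rangle$ are semiperfectoid (a surjection from a perfectoid ring is obtained by base-changing the surjection witnessing $A\in \QRSPerfd$), and the relevant cotangent complex vanishing persists. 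Consequently, the natural maps
\[
(A\langle x\rangle, M)\to (A\langle x^{1/p^\infty}\rangle, M),\quad
(A\langle x, x^{-1}\rangle, M)\to (A\langle x^{1/p^\infty}, x^{-1/p^\infty}\rangle, M)
\]
are log quasi-syntomic covers, and every term $B^\bullet$ of the associated \v{C}ech nerve remains in $\lQRSPerfd$.

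Next I would invoke Proposition \ref{BMS.4}: the presheaves $\THH(-;\Z_p)$, $\TC^-(-;\Z_p)$, and $\TP(-;\Z_p)$ satisfy log quasi-syntomic descent, and the BMS filtrations are constructed by descent from their restrictions to $\lQRSPerfd$, where they are by definition the double-speed Postnikov filtrations. Hence
\[
\Fil^\BMS_i\THH((A\langle x\rangle, M);\Z_p)
\simeq \Tot\bigl(\Fil^\BMS_i\THH(B^\bullet;\Z_p)\bigr)
\simeq \Tot\bigl(\tau_{\geq 2i-1}\THH(B^\bullet;\Z_p)\bigr),
\]
and analogously for the Laurent version, $\TC^-$, and $\TP$.

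The crux of the proof is then to identify this totalization with $\tau_{\geq 2i-1}\Tot(\THH(B^\bullet;\Z_p))\simeq \tau_{\geq 2i-1}\THH((A\langle x\rangle, M);\Z_p)$. Equivalently, one has to show that $\Tot$ commutes with the co-truncation $\tau_{<2i-1}$ along this particular \v{C}ech diagram. This will be the main obstacle, because cosimplicial totalization fails to commute with Postnikov truncation in general. The rescue I would hope for relies on two facts: every $\THH(B^n;\Z_p)$ is concentrated in even degrees by the BMS structure theorem over $\lQRSPerfd$, and the cover has a very special shape—essentially the pro-finite $p^\infty$-root extension $\Z_p\langle x\rangle\to\Z_p\langle x^{1/p^\infty}\rangle$ base-changed to $(A,M)$—so that the Bousfield--Kan spectral sequence computing $\pi_*\Tot(\tau_{<2i-1})$ has contributions confined to a bounded range of bidegrees and converges to the expected answer. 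The $\TC^-$ and $\TP$ variants then follow by applying $(-)^{hS^1}$ and $(-)^{tS^1}$ throughout, since by assumption the BMS filtrations on these theories over $\lQRSPerfd$ are also double-speed Postnikov. As a parallel strategy, one could try to invoke a K\"unneth-type equivalence $\THH((A\langle x\rangle,M);\Z_p)\simeq \THH((A,M);\Z_p)\cotimes_{\THH(\Z_p;\Z_p)} \THH(\Z_p\langle x\rangle;\Z_p)$; since $\THH(\Z_p\langle x\rangle;\Z_p)$ is concentrated in degrees $0$ and $1$, this would shift the descent problem to the log-trivial case of $\Z_p\langle x\rangle$ and $\Z_p\langle x, x^{-1}\rangle$, where it is simpler to analyze directly.
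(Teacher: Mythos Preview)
Your approach is genuinely different from the paper's, and the step you flag as ``the main obstacle'' really is a gap you have not closed. Commuting $\Tot$ with $\tau_{\geq 2i-1}$ along the \v{C}ech nerve of $A\langle x\rangle\to A\langle x^{1/p^\infty}\rangle$ is not a formality: the Bousfield--Kan spectral sequence does not degenerate for free, and even knowing that $\THH(B^n;\Z_p)$ is concentrated in even degrees does not by itself bound the cosimplicial direction. You would essentially be re-proving the unfolding machinery of \cite{BMS19} for this specific ring, and the sketch you give (``contributions confined to a bounded range of bidegrees'') is not enough to conclude.

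The paper avoids the whole issue by a much more direct computation. The key observation is that you do not need to run descent at all: the \emph{graded pieces} $\gr_i^{\BMS}$ are already known in closed form for any $(B,N)\in\lQSyn$ in terms of wedge powers of the cotangent complex (for $\THH$) and the Nygaard-filtered prismatic complex (for $\TC^-$, $\TP$); see \cite[Proposition 7.3]{BLPO2} and \cite[Theorem 7.2(3),(4), Proposition 7.8]{BMS19}. So the paper picks a perfectoid ring $R$ mapping to $A$, computes
\[
\L_{(A\langle x\rangle,M)/R}\simeq A\langle x\rangle \oplus \L_{(A,M)/R}\widehat\otimes_R^\L R\langle x\rangle,
\]
and since $\L_{(A,M)/R}$ has $p$-complete Tor amplitude in degree $-1$ while the free summand $A\langle x\rangle$ has no higher wedge powers, one sees $\wedge^i\L_{(A\langle x\rangle,M)/R}$ has $p$-complete Tor amplitude in $[-i,-i+1]$. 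This forces $\gr_i^{\BMS}\in\rD^{[-2i,-2i+1]}$ for all three theories, and a complete exhaustive filtration whose $i$th graded piece sits in homological degrees $\{2i-1,2i\}$ is automatically the double-speed Postnikov filtration.

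In short: rather than descending from a cover in $\lQRSPerfd$ and fighting with totalization, compute the cotangent complex over a perfectoid base directly and read off the amplitude of the graded pieces. Your parallel K\"unneth idea is closer in spirit to this, but still more roundabout than simply writing down $\L_{(A\langle x\rangle,M)/R}$.
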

\begin{proof}
We focus on the case of $(A\langle x\rangle ,M)$ since the proofs are similar.
By definition of quasi-regular semiperfectoid ring, there exists a perfectoid ring $R$ with a map $R\to A$.
By base-change, we have equivalences of complexes
\[
\L_{(A[x],M)/R}
\simeq
\L_{R[x]/R}\otimes_R^\L A\oplus \L_{(A,M)/R}\otimes_R^\L R[x]
\simeq
A[x]\oplus  \L_{(A,M)/R}\otimes_R^\L R[x].
\]
After taking $p$-completions,
we get an equivalence of complexes
\[
\L_{(A\langle x\rangle,M)/R}
\simeq
A\langle x\rangle \oplus \L_{(A,M)/R}\widehat\otimes_R^\L R\langle x\rangle.
\]
Since $\L_{(A,M)/R}$ has $p$-complete Tor amplitude in degree $-1$ by e.g.\ \cite[Lemma 4.15]{BLPO2} and $\wedge_{A\langle x\rangle}^j A\langle x\rangle \simeq 0$ for $j\geq 2$,
$\wedge_{A\langle x\rangle}^i \L_{(A\langle x\rangle,M)/R}$ has $p$-complete Tor amplitude in $[-i,-i+1]$ for $i\geq 0$.
By \cite[Proposition 7.3]{BLPO2} for $\THH$ and \cite[Theorem 7.2(3),(4), Proposition 7.8]{BMS19} for $\TC^-$ and $\TP$,
we see that the $i$th graded pieces of $\THH((A\langle x\rangle,M);\Z_p)$, $\TC^-((A\langle x\rangle,M);\Z_p)$, and $\TP((A\langle x\rangle,M);\Z_p)$ are in $\rD^{[-2i,-2i+1]}(A\langle x\rangle)$.
Using this,
we deduce the claim.
\end{proof}

 The following proposition is crucial for us. 

\begin{prop}
\label{BMS.2}
For $\kX\in \FlQSyn_R$ and $i\in \Z\cup \{-\infty\}$,
the projective bundle formula for $\THH(\kX\times \P^1;
\Z_p)$ restricts to an equivalence
\[
\Fil^\BMS_i\THH(\kX\times \P^1;\Z_p)
\simeq
\Fil^\BMS_i\THH(\kX;\Z_p)
\oplus
\Fil^\BMS_{i-1}\THH(\kX;\Z_p).
\]
We have similar equivalences for $\TC^-$, $\TP$, and $\TC$ too.
\end{prop}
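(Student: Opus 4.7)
Following the strategy of Proposition \ref{HKR.1}, the plan is to promote the underlying $\P^1$-bundle decomposition of Proposition \ref{HKR.8} to a filtered equivalence. The projection $\kX \times \P^1 \to \kX$ induces a filtered pullback $\Fil_\bullet^\BMS \THH(\kX;\Z_p) \to \Fil_\bullet^\BMS \THH(\kX \times \P^1;\Z_p)$ (and similarly for $\TC^-$, $\TP$, $\TC$), providing the first summand of the candidate. For the second summand, Construction \ref{BMS.15} produces a canonical filtered refinement of the Chern class $\widehat{c}_1(\cO(1))$, lifting it through $\Fil_1^\BMS$; pairing with this lift via the multiplicative structure of the BMS filtration yields a filtration-shifting map $\Fil_{\bullet-1}^\BMS \THH(\kX;\Z_p) \to \Fil_\bullet^\BMS \THH(\kX \times \P^1;\Z_p)$. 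Assembling the two pieces produces a candidate filtered morphism whose underlying map recovers the equivalence $(1,c_1(\cO(1)))$ of Proposition \ref{HKR.8}.

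The next step is to verify that the candidate is a filtered equivalence. By Proposition \ref{BMS.4}, both sides are complete filtered spectra: via log quasi-syntomic descent they reduce to the case $(A,M)\in\lQRSPerfd$, where the BMS filtration is by definition the double-speed Postnikov filtration, which is complete. Hence it suffices to check the induced map on associated gradeds is an equivalence. Under the identifications of $\gr_\bullet^\BMS \THH(-;\Z_p)$, $\gr_\bullet^\BMS \TC^-(-;\Z_p)$, and $\gr_\bullet^\BMS \TP(-;\Z_p)$ with suitable shifts of log Hodge-Tate and (Nygaard-filtered) log prismatic cohomology, the graded statement becomes precisely the $\P^1$-bundle formula for those cohomologies, which is \cite[Corollary 3.11]{BLMP} (the very input used in the proof of Proposition \ref{BMS.14}). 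For $\TC$, apply the defining fiber sequence \eqref{BMS.0.1} to transfer the decomposition from $\TC^-$ and $\TP$; the naturality of the Chern class with respect to $\varphi$ and $\can$ guarantees compatibility with $\varphi-\can$, so the splitting descends to the fiber.

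The main obstacle is producing the filtered refinement of the Chern class used in the first step, namely a canonical lift of $c_1(\cO(1))$ through $\Fil_1^\BMS$ that is natural in $\kX$ and compatible across $\THH$, $\TC^-$, $\TP$, and $\TC$. This is exactly what Construction \ref{BMS.15} provides: by quasi-syntomic descent to $w$-local quasi-regular semiperfectoid $\Z_p^{\mathrm{cycl}}$-algebras, where $T_p(A^\times)[2]\simeq R\Gamma_{\et}(A,\G_m)_p^\wedge[1]$ is concentrated in a single degree, the topological Chern class $\widehat{c}_1^\TC$ is identified up to the truncation $\tau_{[1,2]}$ with the synthetic Chern class $\widehat{c}_1^\syn$, yielding the desired factorization through $\Fil_1^\BMS$. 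Granted this filtered lift of $c_1$, the remainder of the argument is essentially formal: multiplication by a degree-$(\geq 1)$ class shifts the BMS filtration by $+1$, and the graded-piece verification closes the loop.
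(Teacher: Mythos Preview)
Your approach differs from the paper's and, as written, has a gap at the graded-piece step. The paper never constructs a filtered Chern class. Instead it descends to $(A,M)\in\lQRSPerfd$ and invokes Lemma~\ref{BMS.6}: for such $(A,M)$ the BMS filtration on $\THH((A\langle x\rangle,M);\Z_p)$, $\THH((A\langle x^{-1}\rangle,M);\Z_p)$, and $\THH((A\langle x,x^{-1}\rangle,M);\Z_p)$ is again the double-speed Postnikov filtration. The $S^1$-equivariant K\"unneth identification $\THH((A\langle x\rangle,M);\Z_p)\simeq \THH((A,M);\Z_p)\otimes_\Z\HH(\Z[x])$ is then a \emph{filtered} equivalence, and combined with the explicit description of $\HH(\P^1/\Z)$ in Proposition~\ref{BMS.16} the standard cover of $\P^1$ yields the filtered $S^1$-equivariant decomposition \eqref{BMS.2.1} directly. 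Applying $(-)^{hS^1}$ and $(-)^{tS^1}$ (with Lemma~\ref{BMS.6} again) handles $\TC^-$ and $\TP$; \eqref{BMS.0.1} handles $\TC$.

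The gap in your route is the input for the associated-graded check. The reference \cite[Corollary~3.11]{BLMP} establishes $(\P^n,\P^{n-1})$-\emph{invariance} of the log prismatic complexes---exactly what Proposition~\ref{BMS.14} uses---but not the full $\P^1$-bundle decomposition via $(1,\widehat{c}_1^{\syn}(\cO(1)))$ that you need in order to recognise your graded map as a known equivalence. In the paper that decomposition appears only in Construction~\ref{BMS.18}, where it is \emph{derived from} Proposition~\ref{BMS.2} and then matched against \cite[Lemma~9.1.4]{BL22} in the non-log case; invoking it here would be circular. Your strategy becomes valid once the log $\P^1$-bundle formula for $\gr^\rN\cPrism$, $\Fil^\rN\cPrism$, and $\cPrism$ (with the explicit $(1,c_1^{\syn})$ splitting) is established independently, but that is additional work you have not supplied. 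Lemma~\ref{BMS.6} is precisely the device the paper uses to sidestep this issue.
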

\begin{proof}
By log quasi-syntomic descent,
we reduce to the case when $\kX=\Spf(A,M)$ and $(A,M)\in \lQRSPerfd$.
By Remark \ref{BMS.19} and Lemma \ref{BMS.6},
the standard cover on $\P^1$ yields the cartesian square of $S^1$-equivariant filtered spectra
\[
\begin{tikzcd}
\THH((\P^1_{\Spf(A,M)})^\wedge_p;\Z_p)\ar[d]\ar[r]&
\THH((A\langle x\rangle,M);\Z_p)\ar[d]
\\
\THH((A\langle x^{-1}\rangle,M);\Z_p)\ar[r]&
\THH((A\langle x,x^{-1}\rangle,M);\Z_p)
\end{tikzcd}
\]
such that the filtrations on the three corners other than $\THH((\P^1_{\Spf(A,M)})^\wedge_p;\Z_p)$ are the double speed Postnikov filtrations. 
Next, we note that we have an $S^1$-equivariant equivalence
\begin{equation}
    \label{eq:bms_filtere}
\THH((A\langle x\rangle,M); \Z_p) \simeq \THH((A,M); \Z_p)\otimes_\Z \HH(\Z[x]) \end{equation}
and similarly for $A\langle x^{-1}\rangle$ and $A\langle x, x^{-1}\rangle$ (note that the right hand-side is already $p$-complete).
Hence,  
the descriptions of $\HH(\Z[x])$, $\HH(\Z[x^{-1}])$, and $\HH(\Z[x,x^{-1}])$ in Proposition \ref{BMS.16} tell that the double-speed Postnikov filtrations on these agree with the usual Postnikov filtrations: in other words, the equivalence \eqref{eq:bms_filtere} is an equivalence of $S^1$-equivariant filtered spectra.
Hence we obtain the $S^1$-equivariant equivalence of filtered spectra
\[
\THH((\P^1_{\Spf(A,M)})^\wedge_p;\Z_p)
\simeq
\THH(X;\Z_p)\otimes_\Z \HH(\P^1),
\]
where the filtration on $\HH(\P^1)$ is given by Proposition \ref{BMS.16}.
Hence we obtain the $S^1$-equivariant decomposition
\begin{equation}
\label{BMS.2.1}
\THH(\kX\times \P^1;\Z_p)
\simeq
\THH(\kX;\Z_p)
\oplus
(\THH(\kX;\Z_p)[1])\{-1\},
\end{equation}
where the filtration on $\THH(X;\Z_p)$ is the double-speed Postnikov filtration,
and
\[
\Fil_i(\THH(\kX;\Z_p)[1])\{-1\})
:=
\Fil_{i-1}\THH(\kX;\Z_p).
\]
From this,
we obtain the desired equivalence for $\THH$.
To obtain the desired equivalences for $\TC^-$ and $\TP$,
apply $(-)^{hS^1}$ and $(-)^{tS^1}$ to \eqref{BMS.2.1},
and use Lemma \ref{BMS.6}.
For $\TC$,
use \eqref{BMS.0.1}.
\end{proof}

\begin{const}
\label{BMS.18}
For $\kX\in \FlQSyn_R$,
Proposition \ref{BMS.2} yields the natural equivalences of spectra
\begin{gather*}
\gr^\BMS_i \THH(\kX;\Z_p)
\oplus
\gr^\BMS_{i-1}\THH(\kX;\Z_p)
\simeq
\gr^\BMS_i \THH(\kX\times \P^1;\Z_p),
\\
\gr^\BMS_i \TC^-(\kX;\Z_p)
\oplus
\gr^\BMS_{i-1}\TC^-(\kX;\Z_p)
\simeq
\gr^\BMS_i \TC^-(\kX\times \P^1;\Z_p),
\\
\gr^\BMS_i \TP(\kX;\Z_p)
\oplus
\gr^\BMS_{i-1}\TP(\kX;\Z_p)
\simeq
\gr^\BMS_i \TP(\kX\times \P^1;\Z_p),
\\
\gr^\BMS_i \TC(\kX;\Z_p)
\oplus
\gr^\BMS_{i-1}\TC(\kX;\Z_p)
\simeq
\gr^\BMS_i \TC(\kX\times \P^1;\Z_p).
\end{gather*}
As in Construction \ref{BMS.17}  these are identified with the projective bundle formulas for $\kX\times \P^1$
\begin{gather*}
\gr^\rN_i R\Gamma_{\cPrism}(\kX)\{i\}
\oplus
\gr^\rN_{i-1} R\Gamma_{\cPrism}(\kX)\{i-1\}[-2]
\xrightarrow{\simeq}
\gr^\rN_i R\Gamma_{\cPrism}(\kX\times \P^1)\{i\}
\\
\Fil^\rN_i R\Gamma_{\cPrism}(\kX)\{i\}
\oplus
\Fil^\rN_{i-1} R\Gamma_{\cPrism}(\kX)\{i-1\}[-2]
\xrightarrow{\simeq}
\Fil^\rN_i R\Gamma_{\cPrism}(\kX\times \P^1)\{i\}
\\
R\Gamma_{\cPrism}(\kX)\{i\}
\oplus
R\Gamma_{\cPrism}(\kX)\{i-1\}[-2]
\xrightarrow{\simeq}
R\Gamma_{\cPrism}(\kX\times \P^1)\{i\}
\\
R\Gamma_{\syn}(\kX,\Z_p(i))
\oplus
R\Gamma_{\syn}(\kX,\Z_p(i-1))[-2]
\xrightarrow{\simeq}
R\Gamma_{\syn}(\kX\times \P^1,\Z_p(i))
\end{gather*}
induced by $(1,\widehat{c}_1^{\syn} (\cO(1)))$.
These are special cases of \cite[Lemma 9.1.4(4)--(6)]{BL22} when $\kX$ has trivial log structure since the two constructions of $\widehat{c}_1^{\syn}$ discussed in Construction \ref{BMS.15} agree.
\end{const}

We now fix the spectrum $S$ of a quasi-syntomic ring $R$. 

\begin{df}
\label{BMS.1}
For $X\in \SmlSm/S$ and $i\in \Z\cup \{-\infty\}$,
Proposition \ref{BMS.2} yields a natural equivalence of spectra
\begin{equation}
\label{BMS.1.1}
\Fil^{\BMS}_i\THH(X;\Z_p)
\simeq
\Omega_{\P^1}\Fil^{\BMS}_{i+1}\THH(X;\Z_p).
\end{equation}
Here, we are implicitly composing with the $p$-completion functor. Note that every $X\in \SmlSm/S$ automatically satisfies the property that $X^\wedge_p \in \FlQSyn$. 
Using this as bonding maps,
Proposition \ref{BMS.14} enables us to define the complete exhaustive filtration 
\[
\Fil^{\BMS}_i\bTHH(-;\Z_p)
:=
(\Fil^{\BMS}_i \THH(-;\Z_p),\Fil^{\BMS}_{i+1} \THH(-;\Z_p),\ldots)
\]
on the $\P^1$-spectrum $\bTHH(-;\Z_p)\simeq \Fil^\BMS_{-\infty} \bTHH(-;\Z_p)\in \logSH(S)$.
We similarly define the complete exhaustive filtrations
\[
\Fil^{\BMS}_i\bTC^-(-;\Z_p),
\;
\Fil^{\BMS}_i\bTP(-;\Z_p),
\;
\Fil^{\BMS}_i\bTC(-;\Z_p).
\]
\end{df}

\begin{prop}
\label{BMS.5}
For $i\in \Z\cup \{-\infty\}$,
we have a natural equivalence in $\logSH(S)$
\[
\Fil^{\BMS}_i\bTHH(-;\Z_p)(1)[2]
\simeq
\Fil^{\BMS}_{i+1}\bTHH(-;\Z_p).
\]
We have similar equivalences for $\bTC^-$, $\bTP$, and $\bTC$ too.
\end{prop}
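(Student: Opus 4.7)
The statement is the exact topological analog of Proposition \ref{HKR.4}, and my plan is to mimic the structure of that proof: reduce everything to the shift identity for $\P^1$-spectra built level-wise from a Bott-periodic system, applied to the bonding maps \eqref{BMS.1.1}.

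First I would unwind Definition \ref{BMS.1}. By construction, $\Fil^{\BMS}_i\bTHH(-;\Z_p)$ is the $\P^1$-spectrum whose $n$-th level is $\Fil^{\BMS}_{i+n}\THH(-;\Z_p)$, with structure maps coming from the equivalence
\[
\Fil^{\BMS}_{i+n}\THH(X;\Z_p)\;\simeq\;\Omega_{\P^1}\Fil^{\BMS}_{i+n+1}\THH(X;\Z_p)
\]
of \eqref{BMS.1.1}, which is itself induced by the second summand projection of the $\P^1$-bundle decomposition of Proposition \ref{BMS.2}. Since $(1)[2]$ denotes $\Sigma^{2,1}$, i.e.\ tensoring with $\P^1$, and since in a category of $\P^1$-spectra tensoring with $\P^1$ shifts levels (the $n$-th space of $\P^1\otimes E$ is the $(n+1)$-th space of $E$), it follows formally that $\Fil^{\BMS}_i\bTHH(-;\Z_p)(1)[2]$ has $n$-th level $\Fil^{\BMS}_{i+n+1}\THH(-;\Z_p)$, matching the $n$-th level of $\Fil^{\BMS}_{i+1}\bTHH(-;\Z_p)$ on the nose.

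Second, I would check that the bonding maps agree under this identification. Both $\P^1$-spectra use exactly the same family of equivalences \eqref{BMS.1.1} as their structure maps (just reindexed by one unit), and the shift functor on $\P^1$-spectra is defined to transport bonding maps by the same reindexing. So the identification at each level assembles into an equivalence of $\P^1$-spectra in $\logSH(S)$.

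For $\bTC^-$, $\bTP$, and $\bTC$, I would run the exact same argument, since Proposition \ref{BMS.2} supplies the analogous $\P^1$-bundle formula (hence the analog of \eqref{BMS.1.1}) in each of those cases, and Proposition \ref{BMS.14} together with Proposition \ref{BMS.7} (for $\bTC$, via \eqref{BMS.0.1}) provides the required $(\P^n,\P^{n-1})$-invariance used to set up Definition \ref{BMS.1} for these theories. I do not anticipate a genuine obstacle: given the careful work already done in Propositions \ref{BMS.2} and \ref{BMS.14} and Construction \ref{BMS.18}, this proposition is essentially a bookkeeping/shift identity on $\P^1$-spectra, completely parallel to Proposition \ref{HKR.4}. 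The only point worth being careful about is making sure the structure maps are literally those induced by the projective bundle formula (so that the equality of bonding maps under reindexing is a tautology), but this is exactly how Definition \ref{BMS.1} is set up.
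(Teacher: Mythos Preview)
Your proposal is correct and takes essentially the same approach as the paper: the paper's proof is a single sentence, ``For $\bTHH$, this is a direct consequence of \eqref{BMS.1.1}. The proofs of the other cases are similar,'' and you have simply unpacked what that sentence means. The only stray remark is your invocation of Proposition \ref{BMS.7}, which is about vanishing of negative BMS filtration pieces for $\TC$ rather than $(\P^n,\P^{n-1})$-invariance; the latter for $\bTC$ comes from \eqref{BMS.0.1} combined with Proposition \ref{BMS.14}, as you also note, so this does not affect the argument.
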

\begin{proof}
For $\bTHH$,
this is a direct consequence of \eqref{BMS.1.1}.
The proofs of the other cases are similar. 
\end{proof}
We recall now from \cite[Theorem 1.3 and \S 7.4]{BLPO2} (see also \cite[\S 3]{BLMP}) that the Nygaard-completed absolute log prismatic cohomology 
$\cPrism:= R\Gamma_{\rm lqsyn}(-,\pi_{0}\TP(-)_p^\wedge)$  equipped with the Nygaard filtration define functors
\begin{align*}
&\cPrism\in \Fun(\lQSyn,\cD(\Z_p)), &\Fil^{\rN}_{i}\cPrism \in \Fun(\lQSyn, \widehat{\cD\cF}(\Z_p))
\end{align*}
analogously to \cite[\S 7]{BMS19}, where  $\Fil^{\rN}_{i}\cPrism$ denotes  the filtered piece $\geq i$ with respect to the Nygaard filtration. By \cite[Theorem 2.9]{BLPO2}, these functors are in fact quasi-syntomic sheaves.  
Define the Breuil--Kisin twists 
\begin{align*}
    \cPrism\{1\}&:= R\Gamma_{\rm lqsyn}(-,\pi_{2}\TP(-)_p^\wedge)[-2]\\
    \cPrism\{-1\}&:= R\Gamma_{\rm lqsyn}(-,\pi_{-2}\TP(-)_p^\wedge)[2]
\end{align*}
with the Nygaard filtrations given by unfolding the double-speed Postnikov filtration.  
As in \cite[Theorem 1.12 (3)]{BMS19}, we get the twisted prismatic cohomology $\cPrism\{i\}$ by taking tensor powers in $\Fun(\lQSyn,\widehat{\cD\cF}(\Z_p))$, with the induced filtration. In light of \cite[Theorems 3.15, 3.16]{BLMP} we can make the following definition: 
\begin{df} 
For integers $d$ and $i$,
we define $\P^1$-spectra
\begin{gather*}
\Fil^\rN_i\bPrism\{d\}
:=
(
\Fil^\rN_i\cPrism\{d\},
\Fil^\rN_{i+1}\cPrism\{d+1\}[2],
\ldots
),
\\
\bPrism\{d\}
:=
(
\cPrism\{d\},
\cPrism\{d+1\}[2],
\ldots
),
\\
\bM \Z_p^{\syn}(d)
:=
\fib(\varphi_p-\can\colon \Fil^\rN_d\bPrism\{d\}\to \bPrism\{d\}),
\end{gather*}
where the bonding maps for $\Fil^\rN_n\bPrism\{d\}$, $\bPrism\{d\}$, and $\bM\Z_p^{\syn}(d)$ are obtained by the projective bundle formulas in Construction \ref{BMS.18},
and the morphisms $\varphi_p$ and $\can$ are the pointwise cyclotomic trace and canonical morphisms.
We will omit $\{d\}$ and $(d)$ if $d=0$.
\end{df}
Note that in \cite[Theorems 3.15, 3.16]{BLMP}, (formal variants of) $\bPrism$, $\Fil^\rN_0\bPrism$, and $\bM\Z_p^{\syn}$ were denoted by $\bE^{\cPrism}$, $\bE^{\Fil \,\cPrism}$, and $\bE^{\mathrm{Fsyn}}$.

\begin{prop}
\label{BMS.9}
For $i\in \Z$,
we have natural equivalences
\begin{gather*}
\gr^{\BMS}_i \bTHH(-;\Z_p)
\simeq
\gr^\rN_i \bPrism\{i\}[2i],
\\
\gr^{\BMS}_i \bTC^-(-;\Z_p)
\simeq
\Fil^\rN_i\bPrism\{i\}[2i],
\\
\gr^{\BMS}_i \bTP(-;\Z_p)
\simeq
\bPrism\{i\}[2i],
\\
\gr^{\BMS}_i \bTC(-;\Z_p)
\simeq
\bM \Z_p^{\syn}(i)[2i]
\end{gather*}
in $\logSH(S)$.
\end{prop}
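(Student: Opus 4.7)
The plan is to establish the four equivalences level-by-level in the $\P^1$-spectrum structure and then check that the bonding maps on both sides agree. At each level, i.e., evaluated on an $X \in \SmlSm/S$ after $p$-completion, the identifications
\[
\gr^{\BMS}_i \THH(-;\Z_p) \simeq \gr^\rN_i \cPrism\{i\}[2i],
\quad
\gr^{\BMS}_i \TC^-(-;\Z_p) \simeq \Fil^\rN_i \cPrism\{i\}[2i],
\quad
\gr^{\BMS}_i \TP(-;\Z_p) \simeq \cPrism\{i\}[2i]
\]
are the content of \cite[Theorem 1.3]{BLPO2} together with \cite[\S 3]{BLMP}: they follow from log quasi-syntomic descent applied to the quasi-regular semiperfectoid case, where the BMS filtration is tautologically the double-speed Postnikov filtration, plus Zariski descent to pass from affines to formal log schemes.

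Next, one must verify that the bonding maps match. By Definition \ref{BMS.1} and Proposition \ref{BMS.5}, the bonding map of the filtered $\P^1$-spectrum $\Fil^{\BMS}_i \bTHH(-;\Z_p)$ is the equivalence of Proposition \ref{BMS.2}, i.e., the projection onto the second summand of the $\P^1$-bundle decomposition for $\Fil^{\BMS}_i \THH(-\times\P^1;\Z_p)$. Passing to graded pieces, this is exactly the map described in Construction \ref{BMS.18}. That construction also identifies this map, under the pointwise equivalences above, with the prismatic $\P^1$-bundle decomposition induced by $(1, \widehat{c}_1^{\syn}(\cO(1)))$, which is precisely the bonding map used to assemble $\gr^\rN_i \bPrism\{i\}$, $\Fil^\rN_i \bPrism\{i\}$, and $\bPrism\{i\}$ in their definitions. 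The same argument handles $\bTC^-$ and $\bTP$, yielding the first three equivalences.

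For $\bTC$, apply the pointwise fiber sequence \eqref{BMS.0.1} termwise. Since the bonding maps for $\bTC^-$ and $\bTP$ match on graded pieces with those for $\Fil^\rN_i\bPrism\{i\}$ and $\bPrism\{i\}$, and since the cyclotomic Frobenius and canonical maps are functorial in $(-)\times \P^1$, the morphism $\varphi-\can$ assembles into a morphism of $\P^1$-spectra whose fiber is $\bM\Z_p^{\syn}(i)[2i]$ by definition. Taking fibers is compatible with the graded construction, which concludes the identification $\gr^{\BMS}_i\bTC(-;\Z_p) \simeq \bM\Z_p^{\syn}(i)[2i]$.

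The main obstacle is the compatibility embedded in Construction \ref{BMS.18}: one must know that the $\P^1$-bundle decomposition on the graded pieces is truly induced by the syntomic/prismatic first Chern class, rather than just being abstractly isomorphic to such a decomposition. For trivial log structure this is \cite[Lemma 9.1.4]{BL22}, and it is extended to the logarithmic setting via log quasi-syntomic descent together with the identification of $\widehat{c}_1^{\syn}$ with $\tau_{[1,2]}\widehat{c}_1^{\TC}$ on sufficiently fine covers, as discussed in Construction \ref{BMS.15}. Once this compatibility is in place, the level-wise identifications promote to equivalences of $\P^1$-spectra, and no further calculation is needed.
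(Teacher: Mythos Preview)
Your proposal is correct and follows essentially the same approach as the paper: the paper's proof invokes \cite[Theorem 1.3(3),(4)]{BLPO2} for the level-wise identifications and then appeals to Construction \ref{BMS.18} for the compatibility of bonding maps, which is precisely the two-step strategy you outlined (with the $\bTC$ case handled via the fiber sequence \eqref{BMS.0.1}, as you also do). Your version simply unpacks in more detail what the paper compresses into two sentences.
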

\begin{proof}
By \cite[Theorem 1.3(3),(4)]{BLPO2},
it remains to show that the bonding maps for the left-hand sides can be identified with the bonding maps for the right-hand sides.
This is a consequence of Construction \ref{BMS.18}.
\end{proof}
\begin{rmk}
    The right-hand side of the displayed equivalences in Proposition \ref{BMS.9} are naturally constructed as objects of $\logDA(S, \Z_p)$, while the filtered spectra  $\Fil^{\BMS}_{i+1}\bTHH(-;\Z_p)$ and variants are in $\logSH(S, \Sph_p)$, where $\Sph_p$ denotes the $p$-completed sphere spectrum. Note that, in particular, we obtain that each graded piece $\gr^{\BMS}_i $ is in $\logDA(S, \Z_p)$.
\end{rmk}

The Tate twist is related to the Breuil-Kisin twist in the following sense:

\begin{prop}
\label{BMS.10}
For $d\in \Z$ and $i\in \Z\cup \{-\infty\}$,
we have a natural equivalence
\[
\Fil^\rN_i\bPrism(d)
\simeq
\Fil^\rN_{i+d}\bPrism\{d\}
\]
in $\logSH(S)$.
\end{prop}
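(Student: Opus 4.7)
The plan is to compare both sides level by level as $\P^1$-spectra. Recall that in $\logSH(S)$ the motivic Tate twist satisfies $E(d)[2d]\simeq \Sigma^{2d,d}E$, and that on a $\P^1$-spectrum $E=(E_0,E_1,E_2,\ldots)$ the functor $\Sigma^{2d,d}$ (for $d\geq 0$, and similarly for $d<0$ after inverting the bonding maps in the stabilization) is simply re-indexing: $(\Sigma^{2d,d}E)_n=E_{n+d}$, with bonding maps inherited from those of $E$. Applying this to $E=\Fil^\rN_i\bPrism$, where by definition the $n$th level is $\Fil^\rN_{i+n}\cPrism\{n\}[2n]$, the $n$th level of $\Fil^\rN_i\bPrism(d)[2d]$ is $\Fil^\rN_{i+n+d}\cPrism\{n+d\}[2(n+d)]$.

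Next, compare with the right-hand side. By the defining formula for $\Fil^\rN_\bullet\bPrism\{d\}$, the $n$th level of $\Fil^\rN_{i+d}\bPrism\{d\}[2d]$ is $\Fil^\rN_{i+d+n}\cPrism\{d+n\}[2n+2d]$, which matches the level obtained above term by term. To upgrade this termwise matching to an equivalence of $\P^1$-spectra, one checks that the bonding maps agree: both sides inherit their bonding maps from the Nygaard-filtered projective bundle formula in Construction \ref{BMS.18}, and that construction produces the same equivalence for every Breuil-Kisin twist, so the bonding maps of $\Sigma^{2d,d}\Fil^\rN_i\bPrism$ and those of $\Fil^\rN_{i+d}\bPrism\{d\}[2d]$ coincide under the identification of levels. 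Cancelling the common shift $[2d]$ yields the equivalence for every integer $i$; the case $i=-\infty$ then follows by passing to the filtered colimit, which both $(-)(d)$ and the formation of $\Fil^\rN_\bullet$ preserve.

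There is no genuine obstacle here: the whole proof is bookkeeping once one observes that raising the $\P^1$-spectrum index by one in $\bPrism$ raises both the Nygaard filtration index and the Breuil-Kisin twist by one (by definition of $\Fil^\rN_i\bPrism\{d\}$), so a $d$-fold motivic Tate twist, which re-indexes the $\P^1$-spectrum by $d$, is precisely absorbed by shifting the Nygaard index by $d$ on the $\bPrism\{d\}$-side. The only mildly delicate point, which is routine, is confirming that the bonding maps from Construction \ref{BMS.18} match on the nose rather than up to some twist, but this is built into the uniform nature of the construction across all Breuil-Kisin twists.
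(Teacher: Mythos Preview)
Your proof is correct and follows essentially the same approach as the paper: both arguments recognize that the Tate twist $(d)[2d]$ on a $\P^1$-spectrum amounts to re-indexing by $d$, and then compare levels termwise using the definition of $\Fil^\rN_i\bPrism\{d\}$. Your version is slightly more explicit about checking that the bonding maps match via Construction \ref{BMS.18} and about the $i=-\infty$ case, but this is the same bookkeeping argument the paper gives.
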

\begin{proof}
We have
\[
\Fil^\rN_i\bPrism
:=
(\Fil^\rN_i\cPrism,
\Fil^\rN_{i+1}\cPrism\{1\}[2],
\ldots
).
\]
Tensoring with $(d)[2d]$ in $\logSH(S)$ means shifting $d$ terms,
so we have
\[
\Fil^\rN_i\bPrism(d)[2d]
\simeq
(\Fil^\rN_{i+d}\cPrism\{d\}[2d],
\Fil^\rN_{i+d+1}\cPrism\{d+1\}[2d+1],
\ldots
).
\]
We obtain the desired equivalence after applying $[-2d]$.
\end{proof}

If $i=-\infty$,
then observe that Proposition \ref{BMS.10} yields an equivalence
\[
\bPrism(d)\simeq \bPrism\{d\}
\]
in $\logSH(S)$ for $d\in \Z$.

The notation $\bM \Z_p^{\syn}(i)$ is compatible with the Tate twist in the following sense:

\begin{prop}
\label{BMS.11}
For $d,i\in \Z$, we have a natural equivalence
\[
(\bM \Z_p^{\syn}(i))(d)
\simeq
\bM \Z_p^{\syn}(i+d)
\]
in $\logSH(S)$.
\end{prop}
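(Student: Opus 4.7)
The plan is to apply the Tate twist $(d)$ to the defining fiber sequence
\[
\bM\Z_p^{\syn}(i)
\simeq
\fib\bigl(\varphi_p - \can\colon \Fil^\rN_i\bPrism\{i\} \to \bPrism\{i\}\bigr)
\]
and then recognize the resulting objects as the pieces defining $\bM\Z_p^{\syn}(i+d)$. Since $(d)=\Sigma^{2d,d}$ is an auto-equivalence of $\logSH(S)$, it is exact, and hence commutes with the formation of fibers, giving
\[
(\bM\Z_p^{\syn}(i))(d)
\simeq
\fib\bigl(\varphi_p - \can\colon \Fil^\rN_i\bPrism\{i\}(d) \to \bPrism\{i\}(d)\bigr).
\]

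First, I would establish the relative version of Proposition \ref{BMS.10} needed here, namely natural equivalences
\[
\Fil^\rN_i\bPrism\{i\}(d) \simeq \Fil^\rN_{i+d}\bPrism\{i+d\}
\text{ and }
\bPrism\{i\}(d) \simeq \bPrism\{i+d\}
\]
in $\logSH(S)$. Indeed, unwinding the definitions, the $n$th term of $\Fil^\rN_i\bPrism\{i\}$ is $\Fil^\rN_{i+n}\cPrism\{i+n\}[2n]$, and twisting by $(d)$ amounts to shifting the spectrum by $d$ terms and applying $[-2d]$; this leaves a $\P^1$-spectrum whose $n$th term is $\Fil^\rN_{i+n+d}\cPrism\{i+n+d\}[2n]$, which matches the $n$th term of $\Fil^\rN_{i+d}\bPrism\{i+d\}$. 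The argument for the unfiltered version is identical (and recovers the $i=-\infty$ case of Proposition \ref{BMS.10}). Crucially, in both cases the bonding maps on each side are induced by the projective bundle formulas of Construction \ref{BMS.18}, which are compatible with the shift by construction.

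The last step is to identify the map $\varphi_p - \can$ on the twisted side with the one on the shifted side under these equivalences. Since both $\varphi_p$ and $\can$ are defined levelwise (pointwise in the $\P^1$-spectrum), and the operation $(d)$ is purely a reindexing of levels together with a cohomological shift, naturality forces $\varphi_p$ (resp.\ $\can$) on $\Fil^\rN_i\bPrism\{i\}(d)\to \bPrism\{i\}(d)$ to agree, under the identification above, with $\varphi_p$ (resp.\ $\can$) on $\Fil^\rN_{i+d}\bPrism\{i+d\}\to \bPrism\{i+d\}$. Taking fibers then yields the desired equivalence $(\bM\Z_p^{\syn}(i))(d)\simeq \bM\Z_p^{\syn}(i+d)$.

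The main obstacle I anticipate is bookkeeping: one has to be careful that the identification of filtered $\P^1$-spectra coming from shifting levels genuinely intertwines the Frobenius and canonical bonding data. Once the compatibility of the projective bundle formulas with the twists $\{d\}$ is made explicit (as already used in Construction \ref{BMS.18} and in the proof of Proposition \ref{BMS.10}), the rest is formal.
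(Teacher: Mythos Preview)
Your proposal is correct and is essentially an unpacking of the paper's one-line proof, which simply says ``This is a direct consequence of Proposition \ref{BMS.10}.'' You correctly identify that the needed equivalences $\Fil^\rN_i\bPrism\{i\}(d)\simeq \Fil^\rN_{i+d}\bPrism\{i+d\}$ and $\bPrism\{i\}(d)\simeq \bPrism\{i+d\}$ follow from the same shift argument used to prove Proposition \ref{BMS.10} (indeed, they follow from that proposition itself by first writing $\Fil^\rN_i\bPrism\{i\}\simeq \Fil^\rN_0\bPrism(i)$), and that the compatibility with $\varphi_p-\can$ is immediate since these maps are defined levelwise and the Tate twist is just a reindexing of levels.
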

\begin{proof}
This is a direct consequence of Proposition \ref{BMS.10}.
\end{proof}

\begin{thm}
\label{BMS.3}
For $i\in \Z$,
\[
\Fil_\BMS^i\bTHH(-;\Z_p),
\;
\Fil_\BMS^i\bTC^-(-;\Z_p),
\;
\Fil_\BMS^i\bTP(-;\Z_p),
\;
\Fil_\BMS^i\bTC(-;\Z_p)
\]
are in $\logSH(S)_{\leq i}^{\veff}$. In particular, there are canonical morphisms
\[
\tilde{\fil}_n \THH(-;\Z_p) \simeq \Sigma^{2n,n} \tilde{\fil}_0 \THH(-;\Z_p) \to \Fil_n^{\BMS} \THH(-;\Z_p), 
\]
and similarly for $\bTC^{-}$, $\bTP$ and $\bTC$. 
\end{thm}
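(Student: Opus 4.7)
My plan is to closely follow the template of Theorem \ref{HKR.3}. Applying Proposition \ref{BMS.5} at $i=-\infty$ shows that the underlying spectra $\bTHH(-;\Z_p)$, $\bTC^-(-;\Z_p)$, $\bTP(-;\Z_p)$, $\bTC(-;\Z_p)$ are Bott periodic; taking cofibers of the filtration maps then gives $\Fil_\BMS^{i+1}\cF \simeq \Sigma^{2,1}\Fil_\BMS^i\cF$ for each of these theories $\cF$. Since $\Sigma^{2,1}$ carries $\logSH(S)_{\leq i}^\veff$ into $\logSH(S)_{\leq i+1}^\veff$, it suffices to treat a single value of $i$, and I would pick $i=-1$, so that $\Fil_\BMS^{-1}\cF=\cofib(\Fil^\BMS_0\cF \to \cF)$ is built by extensions and filtered colimits from the graded pieces $\gr^\BMS_{-j}\cF$ with $j\geq 1$.

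Since $\Sigma^\infty X_+$ is compact in $\logSH(S)$ and $\logSH(S)_{\geq 0}^\veff$ is generated under colimits and extensions by such suspension spectra, the coeffectivity at $i=-1$ reduces to showing that the mapping spectrum $\map_{\logSH(S)}(\Sigma^\infty X_+,\gr^\BMS_{-j}\cF)$ has trivial homotopy groups in non-negative degrees for every $X\in \SmlSm/S$ and $j\geq 1$. At this point I would invoke Proposition \ref{BMS.9} to identify the graded pieces as $\gr^\rN_{-j}\bPrism\{-j\}[-2j]$, $\Fil^\rN_{-j}\bPrism\{-j\}[-2j]$, $\bPrism\{-j\}[-2j]$, and $\bM\Z_p^\syn(-j)[-2j]$, respectively. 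The $\bTHH$ case is immediate: the Nygaard filtration is trivial in negative indices, so its graded pieces vanish outright. For $\bTC^-$ and $\bTP$ one has $\Fil^\rN_{-j}\bPrism\{-j\} = \bPrism\{-j\}$ when $j\geq 1$, and the $\bTC$ case is controlled by the fiber of $\varphi-\can$ acting on $\bPrism\{-j\}$.

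To conclude the remaining cases, I would use that $\cPrism\{d\}$ evaluated on a log quasi-syntomic object lives in non-negative cohomological degrees (which follows by quasi-syntomic descent from the quasi-regular semiperfectoid case, where $\cPrism$ is concentrated in degree $0$ and the Breuil--Kisin twists preserve this range), and that taking the fiber of a self-map preserves this range via its long exact sequence, so that $\bM\Z_p^\syn(-j)$ also sits non-negatively. The shift $[-2j]$ with $j\geq 1$ then places the relevant mapping spectrum $R\Gamma_\sNis(X,\cPrism\{-j\})[-2j]$ (respectively for the syntomic version) in cohomological degree $\geq 2$, killing all its $\pi_n$ for $n\geq 0$. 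The final assertion on the canonical morphisms $\tilde{\fil}_n\cF \to \Fil_n^\BMS\cF$ then follows directly from Lemma \ref{slice.2}.

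The main obstacle I foresee is the bookkeeping of cohomological amplitudes across the $\P^1$-spectrum structure: one has to verify using Construction \ref{BMS.18} together with the $(\P^n,\P^{n-1})$-invariance of Proposition \ref{BMS.14} that each $\bPrism\{d\}$ is a genuine $\Omega_{\P^1}$-spectrum, so that $\map_{\logSH(S)}(\Sigma^\infty X_+,\bPrism\{d\})$ indeed computes $R\Gamma_\sNis(X,\cPrism\{d\})$, and that the log prismatic cohomology respects the claimed cohomological amplitude. Once these amplitudes are in hand, the vanishing calculation at the level of graded pieces is straightforward.
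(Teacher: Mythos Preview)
Your argument is essentially the same as the paper's: reduce to $i=-1$ via Proposition \ref{BMS.5}, then check vanishing of mapping spectra out of $\Sigma^\infty X_+$ into the negative graded pieces using Proposition \ref{BMS.9} and the coconnectivity of $\cPrism\{-j\}$. One small slip: taking the fiber of a map between objects in $\rD^{\geq 0}$ does \emph{not} preserve that range---it only lands in $\rD^{\geq -1}$, so your claim that $\bM\Z_p^{\syn}(-j)$ ``sits non-negatively'' is off by one. This does not break the argument, since after the shift $[-2j]$ with $j\geq 1$ you still end up in cohomological degree $\geq 1$, which is enough.

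The only genuine divergence from the paper is in how you treat $\bTC$. The paper does not pass to graded pieces there at all: it invokes Proposition \ref{BMS.7}, which shows directly that $\Fil_\BMS^{-1}\TC(-;\Z_p)\simeq 0$ already at the level of log quasi-syntomic rings (using that $\pi_i\TC((A,M);\Z_p)=0$ for $i<-1$ on $\lQRSPerfd$). This is cleaner and avoids the degree-shift bookkeeping from the fiber of $\varphi-\can$. Your route via the graded pieces also works once the $\rD^{\geq -1}$ correction above is made, but the paper's shortcut is worth knowing.
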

\begin{proof}
By Proposition \ref{BMS.5},
we reduce to the case when $i=-1$.
It suffices to show the vanishing
\[
\Hom_{\logSH(S)}(\Sigma^\infty X_+,\Fil_\BMS^{-1}\bTHH(-;\Z_p))
=
0
\]
and similar vanishings for $\bTC^-$, $\bTP$, and $\bTC$.
Since $\Fil_\BMS^{-1}\THH(-;\Z_p)=0$ by construction and $\Fil_\BMS^{-1}\TC(-;\Z_p)=0$ by Proposition \ref{BMS.7},
the claim holds for $\bTHH$ and $\bTC$.
Using the compactness of $\Sigma^\infty X_+$ in $\logSH(S)$,
it suffices to show the vanishing
\[
\Hom_{\logSH(S)}(\Sigma^\infty X_+,\gr^{\BMS}_{-j}\bTC^-(-;\Z_p))
=
0
\]
for every integer $j>0$ and similar vanishing for $\bTP$.
By Proposition \ref{BMS.9},
it suffices to show the vanishings
\[
H_{\sNis}^{-2j}(X,\cPrism\{-j\})=0.
\]
This follows from $\cPrism_{(A,M)}\{-j\}\in \rD^{\geq 0}(A)$ for $(A,M)\in \lQSyn$,
which can be deduced from the case of $(A,M)\in \lQRSPerfd$ by log quasi-syntomic descent.
\end{proof}

\begin{quest} As for the HKR filtration, we leave it as an open question whether the slice and BMS filtrations agree on $\bTHH(-;\Z_p)$, $\bTC^-(-;\Z_p)$, $\bTP(-;\Z_p)$, and $\bTC(-;\Z_p)$. Since $T$ is the analog of $S^2$, the double-speed convergence of the very effective slice filtration on $\KGL$ gives at least an analogy with the double-speed Postnikov filtration defining the BMS filtration.
\end{quest}

\section{The characteristic \texorpdfstring{$p$}{p} case}

Throughout this section,
$k$ is a perfect field of characteristic $p$.
Our goal is to identify the graded pieces of the filtered spectra $\bTHH(-;\Z_p)$, $\bTC^-(-;\Z_p)$, $\bTP(-;\Z_p)$, and $\bTC(-;\Z_p)$ in this case.
We will use \cite{BMS19} for the identifications at the level of $S^1$-spectra,
and then we will use the projective bundle formulas in \cite{BL22} to identify the bonding maps.

\begin{const}
\label{p.7}
For a log smooth saturated pre-log $k$-algebra $(A,M)$,
recall from \cite[\S 4.3]{BLMP} that the chain complex $W\Omega_{(A,M)/k}$ comes equipped with the Nygaard filtration $\Fil_\rN^i W\Omega_{(A,M)/k}$ given by the subcomplex
\begin{align*}
p^{i-1}VW(A)
\to &
p^{i-2}VW\Omega_{(A,M)/k}^1
\to
\cdots
\to
pVW\Omega_{(A,M)/k}^{i-2}
\\
\to
&
VW\Omega_{(A,M)/k}^{i-1}
\to
W\Omega_{(A,M)/k}^i
\to
W\Omega_{(A,M)/k}^{i+1}
\to
\cdots.
\end{align*}
For $X\in \lSm/k$,
the log Crystalline cohomology $R\Gamma_{\crys}(X):=R\Gamma_\Zar(X,W\Omega_{X/k})$ is then equipped with the Nygaard filtration $\Fil^\rN_i R\Gamma_{\crys}$ by Zariski descent.
Furthermore,
we obtain a natural equivalence of complexes
\begin{equation}
\label{p.7.1}
\gr^\rN_i R\Gamma_{\crys}(X)
\simeq
R\Gamma_\Zar(X,\tau^{\leq i} \Omega_{X/k})
\end{equation}
from \cite[(4.22.1), (4.22.2)]{BLMP}.

By \cite[Theorem 4.30]{BLMP},
we have a natural equivalence of filtered $\E_\infty$-rings
\[
R\Gamma_{\cPrism}(X)
\simeq
R\Gamma_{\crys}(X).
\]
\end{const}

\begin{const}
\label{p.12}
For $X\in \lSm/k$,
recall from \cite[Construction 7.3.1]{BL22} the morphism of complexes
\[
c_1^{\crys}
\colon
R\Gamma_{\et}(\ul{X},\G_m)[1]
\to
\Fil^\rN_1R\Gamma_{\crys}(\ul{X})[2].
\]
Compose this with the canonical morphism $\Fil^\rN_1R\Gamma_{\crys}(\ul{X})[2]\to \Fil^\rN_1R\Gamma_{\crys}(X)[2]$ to obtain
\[
c_1^{\crys}
\colon
R\Gamma_{\et}(\ul{X},\G_m)[1]
\to
\Fil^\rN_1R\Gamma_{\crys}(X)[2].
\]
As a consequence of \cite[Proposition 7.5.5]{BL22},
we see that the triangle
\begin{equation}
\label{p.12.1}
\begin{tikzcd}
&
R\Gamma_{\et}(\ul{X},\G_m)[1]\ar[ld,"c_1^{\cPrism}"']\ar[rd,"c_1^\crys"]
\\
\Fil^\rN_1R\Gamma_{\cPrism}(X)[2]\ar[rr,"\simeq"]&
&
\Fil^\rN_1R\Gamma_{\crys}(X)[2]
\end{tikzcd}
\end{equation}
commutes.
\end{const}

\begin{const}
For $X\in \lSm/k$,
recall from \cite[Theorem 4.12]{MR1463703} that we have the inverse Cartier operator,
which is a natural isomorphism of graded $\cO_X$-algebras
\[
C_{X/k}^{-1}
\colon
\Omega_{X/k}^*
\xrightarrow{\cong}
\cH^*(\Omega_{X/k})
\]
sending $d\log x$ to $d\log x$ for every local section $x$ of $\cM_X$.
Using this,
we have the composite morphism
\[
c_1^{\Hod}
\colon
R\Gamma_{\Zar}(\ul{X},\G_m)[1]
\xrightarrow{c_1^{\Hod}}
R\Gamma_{\Zar}(X,\Omega_{X/k}^1)[1]
\xrightarrow{C_{X/k}^{-1}}
R\Gamma_{\Zar}(X,\cH^1(\Omega_{X/k}))[1],
\]
and we have the projective bundle formulas for $\P^1$
\begin{gather*}
R\Gamma_{\Zar}(X,\cH^i(\Omega_{X/k}))
\oplus
R\Gamma_{\Zar}(X,\cH^{i-1}(\Omega_{X/k}))[-1]
\xrightarrow{\simeq}
R\Gamma_{\Zar}(X\times \P^1,\cH^i(\Omega_{X/k})),
\\
R\Gamma_{\Zar}(X,\tau^{\leq i}\Omega_{X/k})
\oplus
R\Gamma_{\Zar}(X,\tau^{\leq i-1}\Omega_{X/k})[-1]
\xrightarrow{\simeq}
R\Gamma_{\Zar}(X\times \P^1,\tau^{\leq i}\Omega_{X/k})
\end{gather*}
induced by $(1,c_1^{\Hod}(\cO(1)))$.
Using \eqref{p.7.1},
the last equivalence can be identified with
\[
\gr^\rN_iR\Gamma_{\crys}(X)\oplus \gr^\rN_{i-1}R\Gamma_{\crys}(X)[-1] 
\xrightarrow{\simeq}
\gr^\rN_iR\Gamma_{\crys}(X\times \P^1).
\]
\end{const}

\begin{prop}
\label{p.10}
For $X\in \lSm/k$ and integer $i$,
the natural square
\[
\begin{tikzcd}[column sep=huge]
\Fil^\rN_iR\Gamma_{\crys}(X)\oplus \Fil^\rN_{i-1}R\Gamma_{\crys}(X)[-1] \ar[r,"{(1,c_1^\crys(\cO(1))}"]\ar[d]&
\Fil^\rN_iR\Gamma_{\crys}(X\times \P^1)\ar[d]
\\
\gr^\rN_iR\Gamma_{\crys}(X)\oplus \gr^\rN_{i-1}R\Gamma_{\crys}(X)[-1] \ar[r,"{(1,c_1^\Hod(\cO(1))}"]&
\gr^\rN_iR\Gamma_{\crys}(X\times \P^1)
\end{tikzcd}
\]
commutes.
\end{prop}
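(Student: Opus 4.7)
The plan is to split the square into its two components. Since the first component is the projection onto $\Fil^\rN_i R\Gamma_\crys(X)$ (resp.\ $\gr^\rN_i R\Gamma_\crys(X)$) composed with the pullback along the structure map $X\times \P^1\to X$, the commutativity of the $1$-summand is immediate from the naturality of passing from $\Fil^\rN_i$ to $\gr^\rN_i$. It therefore suffices to analyze the second summand. Concretely, one is reduced to showing that the composite
\[
\Fil^\rN_{i-1}R\Gamma_{\crys}(X)[-1]
\xrightarrow{\;\cup\, c_1^\crys(\cO(1))\;}
\Fil^\rN_i R\Gamma_{\crys}(X\times \P^1)
\to
\gr^\rN_i R\Gamma_{\crys}(X\times \P^1)
\]
agrees with the composite obtained by first projecting to $\gr^\rN_{i-1}R\Gamma_{\crys}(X)[-1]$ and then cupping with $c_1^\Hod(\cO(1))\in R\Gamma_\Zar(X\times\P^1,\cH^1(\Omega))[1]$ through the inverse Cartier isomorphism. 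By the multiplicativity of the Nygaard filtration and of its passage to graded pieces, this in turn reduces to the single identity that the image of $c_1^\crys(\cO(1))\in \pi_0\Fil^\rN_1 R\Gamma_\crys(X\times\P^1)[2]$ under $\Fil^\rN_1\to \gr^\rN_1$ coincides, under the Cartier isomorphism \eqref{p.7.1}, with $c_1^\Hod(\cO(1))\in H^1_\Zar(X\times\P^1,\cH^1(\Omega))$.

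To prove this last identity, I would first use the commutativity of triangle \eqref{p.12.1} to replace $c_1^\crys(\cO(1))$ by $c_1^\cPrism(\cO(1))$ via the equivalence $\Fil^\rN_1 R\Gamma_{\cPrism}(X\times\P^1)\simeq \Fil^\rN_1 R\Gamma_{\crys}(X\times\P^1)$ of Construction \ref{p.7}; this is a filtered equivalence and so compatible with projection to $\gr^\rN_1$. The projective bundle formula of Construction \ref{BMS.18} then identifies the image of $c_1^\cPrism(\cO(1))$ in $\gr^\rN_1 R\Gamma_{\cPrism}(X\times \P^1)$ with the Chern class governing the bundle formula on graded pieces, which by \cite[Lemma 9.1.4]{BL22} (and its log extension from \cite{BLMP}) is nothing other than the conjugate-filtered Chern class of $\cO(1)$, that is, the image of $c_1^\Hod(\cO(1))$ under the Cartier isomorphism. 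By functoriality of $c_1^\crys$, $c_1^\Hod$, and of the Cartier isomorphism with respect to the pullback along the projection $X\times\P^1\to\P^1$ and the universal line bundle, one may further reduce to the universal case $X=\Spec k$, where both sides are classes on $\P^1_k$ and can be pinned down by their restriction to the standard affine cover (the same cover used implicitly in Construction \ref{BMS.17} and in the proof of Proposition \ref{BMS.16}).

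The main obstacle I anticipate is not the reduction itself but rather keeping the various normalizations straight: the map $c_1^\crys$ of Construction \ref{p.12} is defined through $c_1^\cPrism$ and the crystalline--prismatic comparison, while $c_1^\Hod$ enters the bottom row through the inverse Cartier operator, and one must verify that the two normalizations (up to the sign/twist conventions fixed in \cite{BL22} and \cite{BLMP}) really match. Once this bookkeeping is carried out, the computation on $\P^1_k$ is essentially the classical identification of the first crystalline Chern class of $\cO(1)$ with the Hodge-theoretic $d\log$ class of the coordinate, which is exactly what the inverse Cartier operator records.
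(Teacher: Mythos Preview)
Your reduction is correct and matches the paper's: split off the $1$-summand by naturality, reduce to comparing the image of $c_1^{\crys}(\cO(1))$ in $\gr^\rN_1$ with $c_1^{\Hod}(\cO(1))$, and pass to the universal case $X=\Spec(k)$. The gap is at the step where you invoke \cite[Lemma 9.1.4]{BL22}: that lemma records projective bundle formulas for the various prismatic invariants, but it does not identify the class in $\gr^\rN_1$ (over a perfect field of characteristic $p$) with $c_1^{\Hod}$ through the inverse Cartier isomorphism. That identification is exactly the content of the proposition, so appealing to the bundle formula here is circular. You correctly flag the ``bookkeeping'' as the obstacle, but you do not actually perform it.

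The paper closes this gap by an explicit de Rham--Witt computation. After reducing to $X=\Spec(k)$, it passes via the boundary for the standard cover to $A=k[x,x^{-1}]$ and the class of $x^{-1}$. The key move is to factor through the \emph{Hodge} graded piece of the de Rham complex rather than staying in $\gr^\rN_1 W\Omega$: the canonical map $W\Omega_{A/k}\to \Omega_{A/k}$ induces $\gamma^{\dR}\colon \gr^\rN_1 W\Omega_{A/k}\to \gr^\Hod_1\Omega_{A/k}$, and using the explicit description of the equivalence $\gr^\rN_1 W\Omega_{A/k}\simeq \tau^{\leq 1}\Omega_{A/k}$ (via $F/p$ in degree $0$ and $F$ in degree $1$, as in \cite[(I.3.11.3)]{MR565469}) one checks that $H^1(\gamma^{\dR})$ is precisely the Cartier operator $C_{X/k}$, hence an isomorphism. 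Now the comparison lives in $H^1(\gr^\Hod_1\Omega_{A/k})$, where \cite[Example 5.2.4, Theorem 7.6.2]{BL22} identifies the image of $c_1^{\crys}(x^{-1})$ with $d\log x$; since $C_{X/k}(d\log x)=d\log x$, the two Chern classes agree in $\gr^\rN_1$. This explicit Frobenius/Cartier bookkeeping is what your proposal is missing.
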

\begin{proof}
It suffices to show that the image of $c_1^{\crys}(\cO(1))\in H^2(\Fil^\rN_1R\Gamma_{\crys}(X\times \P^1))$ in $H^2(\gr^\rN_1R\Gamma_{\crys}(X\times \P^1))$ agrees with $c_1^{\Hod}(\cO(1))$ since these induce the projective bundle formulas for $\P^1$.
For this,
we reduce to the case when $X=\Spec(k)$ since the general case can be shown by considering the pullbacks of the first Chern classes.

We set $A:=k[x,x^{-1}]$ for simplicity of notation.
Since $\cO(1)\in R\Gamma_{\Zar}(\P_k^1,\G_m)$ is the image of $x^{-1}\in A^*$ under the boundary map $A^*\cong R\Gamma_{\Zar}(\G_{m,k},\G_m) \to R\Gamma_{\Zar}(\P_k^1,\G_m)$,
it suffices to show that the image of $c_1^{\crys}(x^{-1})\in H^1(\Fil^\rN_1W\Omega_{A/k})$ in $H^1(\gr^\rN_1 W\Omega_{A/k})$ agrees with $c_1^{\Hod}(x^{-1})$.

Consider the canonical morphism $W\Omega_{A/k}\to \Omega_{A/k}$,
which induces $\Fil^\rN_1W\Omega_{A/k}\to \Fil^\Hod_1\Omega_{A/k}$ and hence $\gamma^{\dR}\colon \gr^\rN_1 W\Omega_{A/k}\to \gr^\Hod_1 \Omega_{A/k}$.
The equivalence $\gr^\rN_1 W\Omega_{A/k}\simeq \tau^{\leq 1}\Omega_{A/k}$ is explicitly given by the isomorphism of chain complexes
\[
\begin{tikzcd}
VW(A)/pVW(A)\ar[d,"d"']\ar[r,"F/p"]&
A\ar[d,"d"]
\\
W\Omega_{A/k}^1/VW\Omega_{A/k}^1\ar[r,"F"]&
Z\Omega^1_{A/k},
\end{tikzcd}
\]
where the lower horizontal map is due to \cite[(I.3.11.3)]{MR565469}.
Together with \cite[Proposition I.3.3]{MR565469},
we see that $\gamma^{dR}$ is given by the composite morphism $\tau^{\leq 1}\Omega_{A/k} \to H^1(\Omega_{A/k})[-1] \xrightarrow{C_{X/k}} \Omega_{A/k}^1$.
In particular, $H^1(\gamma^{dR})$ is an isomorphism.
Hence to compare $c_1^{\crys}(x^{-1})$ and $c_1^\Hod(x^{-1})$ in $H^1(\gr^\rN_1W\Omega_{A/k})$,
it suffices to compare their images in $H^1(\gr^\Hod_1 \Omega_{A/k})$.

By \cite[Example 5.2.4, Theorem 7.6.2]{BL22},
the image of $c_1^{\crys}(x^{-1})$ in $H^1(\gr^\Hod_1 \Omega_{A/k})$ is identified with $c_1^\Hod(x^{-1})=d\log x\in \Omega_{A/k}^1$.
To conclude,
observe that $H^1(\gamma^{dR})=C_{X/k}\colon H^1(\Omega_{A/k})\to \Omega_{A/k}$ sends $c_1^\Hod(x^{-1})=d\log x$ to $d\log x$.
\end{proof}

\begin{df}
\label{p.1}
The \emph{strict pro-\'etale topology}
on the category of log schemes is the topology generated by the families $\{X_i\to X\}_{i\in I}$ such that $\{\ul{X_i}\to \ul{X}\}_{i\in I}$ is a pro-\'etale covering
Let $\sproet$
be the shorthand for this topology. We denote by $L_{\sproet}(-)$ the strict pro-\'etale sheafification functor.
\end{df}
Recall the following construction from, e.g., \cite{LORENZON2002247}. 
\begin{df} Let $X$ be a fine log scheme over $k$ and let $\mathrm{dlog}\colon \mathcal{M}_X^{\rm gp}\to \Omega^1_{X/k}$ be the natural map. For every integer $r\geq 1$, write $\mathrm{dlog}[\cdot]\colon (\mathcal{M}_X^{\rm gp})^{\otimes i} \to W_r \Omega^{i}_{X/k}$ for the map of strict \'etale sheaves locally given by $m_1\otimes\ldots \otimes m_i \to \mathrm{dlog}[m_1] \wedge \ldots \wedge \mathrm{dlog}[m_i]$, where $[m]$ denotes the lift of $m$ as in \cite[2.5]{LORENZON2002247}. We denote by $W_r\Omega^i_{X/k, \log} \subseteq W_r\Omega^i_{X/k}$ the strict \'etale subsheaf generated by the image of $\mathrm{dlog}[\cdot]$, and by $W\Omega^i_{X/k, \log}$ the limit $\lim_r W_r\Omega^i_{X/k, \log}$ as a strict pro-\'etale sheaf.
\end{df}
\begin{prop}
\label{p.2}
For $X\in \SmlSm/k$ and integer $i$,
the sequence of chain complexes of strict pro-\'etale sheaves
\[
0
\to
W\Omega_{X/k,\log}^i[-i]
\to
\Fil^\rN_i W\Omega_{X/k}^\bullet
\xrightarrow{\varphi/p^i-1}
W\Omega_{X/k}^\bullet
\to
0
\]
is exact in each degree.
Furthermore,
we have a natural equivalence of complexes of strict pro-\'etale sheaves $W\Omega_{X/k,\log}^i\simeq R\lim_m W_m\Omega_{X/k,\log}^i$ (that is, $W\Omega_{X/k,\log}^i$ is also the derived inverse limit).
\end{prop}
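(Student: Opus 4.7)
The plan is to reduce the statement to its analog at each finite level $m \geq 1$, and then to invoke the logarithmic Artin--Schreier theory. Concretely, for each $m \geq 1$ one first needs the short exact sequence of complexes of strict pro-\'etale sheaves
\[
0 \to W_m\Omega^i_{X/k,\log}[-i] \to \Fil^\rN_i W_m\Omega^\bullet_{X/k} \xrightarrow{\varphi/p^i - 1} W_m\Omega^\bullet_{X/k} \to 0,
\]
where $\Fil^\rN_i W_m\Omega^\bullet$ denotes the truncation of $\Fil^\rN_i W\Omega^\bullet$ modulo the canonical level filtration. The equivalence $W\Omega^i_{X/k,\log} \simeq R\lim_m W_m\Omega^i_{X/k,\log}$ would then reduce to a Mittag--Leffler statement for the pro-system $\{W_m\Omega^i_{X/k,\log}\}$, which I would verify by working on a strict pro-\'etale cover affording enough $p$-power roots of units to make the restriction maps $W_{m+1}\Omega^i_{X/k,\log} \twoheadrightarrow W_m\Omega^i_{X/k,\log}$ pointwise surjective (this being the log analog of \cite[I.5.7]{MR565469}). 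Passing to $R\lim_m$ then yields both assertions simultaneously.

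For the finite-level exact sequence, I would check exactness in each cohomological degree $j$ separately. The decisive case is $j = i$: here $\varphi/p^i$ restricts to the Frobenius $F$ on $W_m\Omega^i_{X/k}$, and the identification $\ker(F-1) = W_m\Omega^i_{X/k,\log}$ together with the surjectivity of $F - 1$ on the strict \'etale site is the log Artin--Schreier statement, for which I would appeal to Lorenzon \cite{LORENZON2002247}, extending Illusie \cite{MR565469}. For $j > i$, the map becomes $p^{j-i} F - 1$ on $W_m\Omega^j_{X/k}$; since $p = VF$, the operator $p^{j-i} F$ is nilpotent with respect to the canonical filtration at finite level, so the inverse $\sum_{k \geq 0}(p^{j-i}F)^k$ is a finite sum. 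For $j < i$, the source in degree $j$ is $p^{i-j-1}V W_m\Omega^j_{X/k}$, and a direct computation using $\varphi = p^j F$ on $W\Omega^j$ together with $FV = p$ shows that $\varphi/p^i$ acts as a section of the inclusion $p^{i-j-1} V W_m\Omega^j \hookrightarrow W_m\Omega^j$; bijectivity of $\varphi/p^i - 1$ then follows again by a finite geometric series in $p^{i-j-1}V$.

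The principal obstacle is the degree-$i$ step: the surjectivity of $F - 1$ and the Artin--Schreier identification of its kernel with $W_m\Omega^i_{X/k, \log}$ require substantive input from the log de Rham--Witt theory. Fortunately, these are precisely what is developed in \cite{LORENZON2002247} and used in \cite{BLMP}, and the choice of the strict pro-\'etale topology (rather than the strict \'etale one) is exactly what provides the $p$-power roots of units needed both for the $R^1\lim$-vanishing and for the local arguments underlying the log Artin--Schreier surjectivity.
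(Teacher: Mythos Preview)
Your approach is essentially the one the paper defers to: it says the proof is ``formally identical to \cite[Proposition 8.4]{BMS19}, using \cite[Corollary 2.14]{LORENZON2002247} instead of \cite[Theorem I.5.7.2]{MR565469}'', and you have correctly identified all the ingredients of that argument---the geometric-series bijectivity in degrees $j \neq i$, the log Artin--Schreier input from Lorenzon in degree $i$, and the Mittag--Leffler step for the derived limit.

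There is, however, one organizational wrinkle in your write-up. The finite-level sequence you display, with $\varphi/p^i - 1 \colon \Fil^{\rN}_i W_m\Omega^\bullet \to W_m\Omega^\bullet$, is not well-posed as stated: in the de Rham--Witt formalism $F$ maps $W_{m+1}\Omega \to W_m\Omega$, so $\varphi/p^i = p^{j-i}F$ does not act on a single finite level, and your nilpotence/geometric-series arguments (``$p^{j-i}F$ nilpotent at finite level'', ``finite geometric series in $p^{i-j-1}V$'') do not literally make sense there. They are correct once reinterpreted at the infinite level, where $F$ and $V$ are endomorphisms of $W\Omega^j$ and the series converge by $p$-adic (resp.\ $V$-adic) completeness. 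This is exactly how \cite[Proposition 8.4]{BMS19} is organized: one proves the short exact sequence directly for $W\Omega$, and then handles $W\Omega^i_{\log} \simeq R\lim_m W_m\Omega^i_{\log}$ separately via pro-\'etale surjectivity of the transition maps. With that reordering your argument goes through and matches the paper's.
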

\begin{proof}
This is formally identical to \cite[Proposition 8.4]{BMS19},
 using \cite[Corollary 2.14]{LORENZON2002247} instead of \cite[Theorem I.5.7.2]{MR565469}.
\end{proof}
\begin{rmk}\label{rmk:let=ket} Recall the following fact. For any fs log scheme $X$ and any quasi-coherent sheaf $\cF$ on $\underline{X}$, we have $R\Gamma_{\setale}(X, \cF) \simeq R\Gamma_{\ket}(X,\cF)$ (this is due to Kato, see e.g.\ \cite[Proposition 9.2.3]{BPO}), and this agrees with $R\Gamma_{\Zar}(X, \cF)$ and $R\Gamma_{\sNis}(X, \cF)$ by \cite[(9.1.1)]{BPO}. Moreover, we have an equivalence 
\[ R\Gamma(X, L_\letale \cF) \simeq \colim_{Y\in X_{\rm div}} R\Gamma_{\ket}(Y, \cF) \]
by \cite[Theorem 5.1.2]{BPO}. Hence if $\cG$ is a complex of bounded below $(\P^n, \P^{n-1})$-invariant coherent sheaves on $\SmlSm/k$, then $\cG$ is a complex of log \'etale hypersheaves.
\end{rmk}

For $X\in \lSm/k$ and integer $i\geq 0$,
we set $B\Omega_{X/k}^i:=\im(\Omega_{X/k}^{i-1}\xrightarrow{d} \Omega_{X/k}^i)$ and $Z\Omega_{X/k}^i:=\ker(\Omega_{X/k}^i\xrightarrow{d}\Omega_{X/k}^{i+1})$. The following result is a consequence of \cite[Theorem 4.2]{Merici} and Remark \ref{rmk:let=ket} above, using, for example, the fact that the above presheaves restricted to smooth $k$-schemes are reciprocity sheaves (see \cite[\S 11]{BRS}).
We also provide another direct proof.

\begin{prop}
\label{p.3}
The presheaves of complexes
\[
\Omega^i,
\text{ }
B\Omega^i,
\text{ }
Z\Omega^i,
\text{ }
\tau^{\leq i}\Omega,
\text{ }
W_m\Omega,
\text{ }
W\Omega
\]
on $\SmlSm/k$ given by
\begin{align*}
X\mapsto &
R\Gamma_\Zar(X,\Omega_{X/k}^i),
\text{ }
R\Gamma_\Zar(X,B\Omega_{X/k}^i),
\text{ }
R\Gamma_\Zar(X,Z\Omega_{X/k}^i),
\\
&
R\Gamma_\Zar(X,\tau^{\leq i}\Omega_{X/k}),
\text{ }
R\Gamma_\Zar(X,W_m\Omega_{X/k}),
\text{ }
R\Gamma_\Zar(X,W\Omega_{X/k})
\end{align*}
are $(\P^n,\P^{n-1})$-invariant and log \'etale hypersheaves for all integers $m,n>0$ and $i$.
\end{prop}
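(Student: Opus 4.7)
The plan is to first establish the $(\P^n, \P^{n-1})$-invariance for each presheaf in the list, and then deduce the log \'etale hypersheaf property formally from Remark \ref{rmk:let=ket}. Each entry is a bounded below complex of coherent sheaves, or a derived limit of such, so once invariance is known, the remark immediately yields the hypersheaf property for $\Omega^i, B\Omega^i, Z\Omega^i, \tau^{\leq i}\Omega$, and $W_m\Omega$; the presheaf $W\Omega = R\lim_m W_m\Omega$ then inherits the property, since both $(\P^n,\P^{n-1})$-invariance and the hypersheaf property are stable under homotopy limits.

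The base case is $\Omega^i$, whose $(\P^n,\P^{n-1})$-invariance is the projective bundle formula for Hodge cohomology, a variant of which has already been invoked in the proof of Proposition \ref{HKR.6}. To propagate to $B\Omega^i$, $Z\Omega^i$, and $\mathcal{H}^i(\Omega)$, I would induct on $i$ (starting from $B\Omega^0 = 0$) using the short exact sequences
\[
0 \to Z\Omega^i \to \Omega^i \xrightarrow{d} B\Omega^{i+1} \to 0, \qquad 0 \to B\Omega^i \to Z\Omega^i \to \mathcal{H}^i(\Omega) \to 0,
\]
combined with the Cartier isomorphism $C_{X/k}^{-1}\colon \Omega_{X/k}^i \xrightarrow{\cong} \mathcal{H}^i(\Omega_{X/k})$ recalled above, which identifies the cohomology sheaves of $\Omega$ with the $\Omega^i$ we have already controlled. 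The case of $\tau^{\leq i}\Omega$ is then a formal consequence of the truncation triangles $\tau^{\leq i-1}\Omega \to \tau^{\leq i}\Omega \to \mathcal{H}^i(\Omega)[-i]$, since $(\P^n,\P^{n-1})$-invariance is closed under (co)fiber sequences.

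For $W_m\Omega$, I would induct on $m$, starting from $W_1\Omega = \Omega$. The Illusie-type structural results for the log de Rham--Witt complex (as developed by Hyodo--Kato and Matsuue in the log setting) describe the kernel of the restriction $R\colon W_{m+1}\Omega \to W_m\Omega$ by a finite filtration whose graded pieces are built out of $\Omega^i$, $B\Omega^i$, and $Z\Omega^i$ via the operators $V$ and $dV$. Combined with the previous step and closure of $(\P^n,\P^{n-1})$-invariance under extensions, this upgrades the invariance from $W_m\Omega$ to $W_{m+1}\Omega$. Passing to $R\lim_m$, we obtain $(\P^n,\P^{n-1})$-invariance of $W\Omega$, and Remark \ref{rmk:let=ket} together with the stability of hypersheaves under $R\lim$ completes the proof.

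The main obstacle is the bookkeeping required to identify the graded pieces of the Illusie-type filtration on $W_{m+1}\Omega$ in the log smooth setting for $X\in \SmlSm/k$: one must verify that, up to Frobenius twists, these pieces really are assembled from $\Omega^i, B\Omega^i, Z\Omega^i$ as in the classical (non-log) case, so that the inductive step closes. Modulo this unpacking, the entire argument reduces to the Hodge-cohomology projective bundle formula, the Cartier isomorphism, and formal closure properties of $(\P^n,\P^{n-1})$-invariance under short exact sequences, truncations, and homotopy limits.
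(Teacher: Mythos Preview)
Your proposal is correct and follows essentially the same strategy as the paper: establish the case of $\Omega^i$ directly, bootstrap to $B\Omega^i$ and $Z\Omega^i$ by induction on $i$ via the two short exact sequences and the Cartier isomorphism (the paper cites Kato \cite[(2.12.2)]{AST_1994__223__221_0} for the latter), deduce $\tau^{\leq i}\Omega$, induct on $m$ for $W_m\Omega$, and pass to the limit for $W\Omega$. The only organizational difference is that the paper treats $(\P^n,\P^{n-1})$-invariance and the log \'etale hypersheaf property simultaneously at each step (starting from \cite[Corollary 9.2.2, Proposition 9.2.6]{BPO} for $\Omega^i$), whereas you separate them and invoke Remark \ref{rmk:let=ket} at the end; also, you are more explicit than the paper about the Illusie--Hyodo--Kato filtration needed for the $W_m\Omega$ induction, which the paper leaves as a one-line ``by induction on $m$''.
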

\begin{proof}
By \cite[Corollary 9.2.2, Proposition 9.2.6]{BPO},
$\Omega^i$ is $(\mathbb{P}^n,\mathbb{P}^{n-1})$-invariant and a log \'etale hypersheaf.
We proceed by induction to show that $B\Omega^i$ is $(\mathbb{P}^n,\mathbb{P}^{n-1})$-invariant and a log \'etale hypersheaf.
The claim is clear if $i=0$ since $B\Omega_{X/k}^0=0$ for $X\in \mathrm{SmlSm}/k$.
If the claim holds for $i$,
then use the obvious exact sequence
\[
0
\to
B\Omega_{X/k}^i
\to
\Omega_{X/k}^i
\to
Z\Omega_{X/k}^{i+1}
\to
0
\]
and the exact sequence
\[
0
\to
B\Omega_{X/k}^{i+1}
\to
Z\Omega_{X/k}^{i+1}
\to
\Omega_{X/k}^{i+1}
\to
0
\]
obtained by \cite[(2.12.2)]{AST_1994__223__221_0} to show the claim for $i+1$.
We also deduce that $Z\Omega^i$ is $(\mathbb{P}^n,\mathbb{P}^{n-1})$-invariant and a log \'etale hypersheaf.
It follows that $\tau^{\leq i}\Omega$ is $(\mathbb{P}^n,\mathbb{P}^{n-1})$-invariant and a log \'etale hypersheaf.

The claim for $\Omega^i$ for all $i$ implies the claim for $\Omega$.
By induction on $m$,
we deduce the claim for $W_m\Omega$.
After taking limits over $m$,
we deduce the claim for $W\Omega$.
\end{proof}

We note that $R\Gamma_{Zar}(X,W\Omega_{X/k})$ represents Hyodo-Kato cohomology with the trivial log structure on $k$,
see \cite[Corollary 1.23]{LORENZON2002247}.

\begin{prop}
\label{p.11}
The presheaf of complexes
\[
L_{\seta} W_m\Omega_{\log}^i,
\text{ }
L_{\sproet}W\Omega_{\log}^i
\]
on $\SmlSm/k$ given by
\[
X
\mapsto
R\Gamma_{\seta}(X,W_m\Omega_{X/k,\log}^i),
\text{ }
R\Gamma_{\sproet}(X,W\Omega_{X/k,\log}^i)
\]
are $(\P^n,\P^{n-1})$-invariant and log \'etale hypersheaves for all integers $m,n>0$ and $i$.
\end{prop}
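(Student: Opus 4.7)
The plan is to deduce the proposition from Propositions \ref{p.2} and \ref{p.3}, expressing the logarithmic de Rham--Witt sheaves as fibers of maps between sheaves already under control. For $L_{\sproet}W\Omega_{\log}^i$, Proposition \ref{p.2} gives a fiber sequence
\[
W\Omega_{X/k,\log}^i[-i]
\to
\Fil^\rN_i W\Omega_{X/k}^\bullet
\xrightarrow{\varphi/p^i-1}
W\Omega_{X/k}^\bullet
\]
of complexes of strict pro-\'etale sheaves. Applying $R\Gamma_{\sproet}$ and using that $R\Gamma$ preserves fibers, I reduce to showing that the presheaves $X\mapsto R\Gamma_{\sproet}(X,\Fil^\rN_i W\Omega_{X/k})$ and $X\mapsto R\Gamma_{\sproet}(X,W\Omega_{X/k})$ on $\SmlSm/k$ are $(\P^n,\P^{n-1})$-invariant and log \'etale hypersheaves, since both properties are stable under fibers.

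The terms $W\Omega_{X/k}^j$ are limits of quasi-coherent sheaves on $\underline{X}$, so their strict pro-\'etale cohomology coincides with their Zariski cohomology; hence Proposition \ref{p.3} directly handles $R\Gamma_{\sproet}(-,W\Omega)$. For $\Fil^\rN_i W\Omega$, I would argue by induction on $N$ using the cofiber sequences associated with the Nygaard filtration: each graded piece $\gr^\rN_{i+N} W\Omega\simeq \tau^{\leq i+N}\Omega$ (by \eqref{p.7.1}) is covered by Proposition \ref{p.3}, and the completeness of the Nygaard filtration on $W\Omega$ allows passage to the limit $N\to\infty$ to conclude the analogous statement for the full $\Fil^\rN_i W\Omega$.

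For $L_{\seta}W_m\Omega_{\log}^i$, the argument is analogous, using the finite-level log Bloch--Kato--Illusie short exact sequence (derivable from \cite[Corollary 2.14]{LORENZON2002247})
\[
0\to W_m\Omega_{X/k,\log}^i\to W_m\Omega_{X/k}^i \xrightarrow{1-F} W_m\Omega_{X/k}^i/dV^{m-1}\Omega_{X/k}^{i-1}\to 0
\]
of strict \'etale sheaves. The two right-hand terms are (quotients of) quasi-coherent sheaves, so an induction on $m$, starting with Proposition \ref{p.3} in the case $m=1$, yields that their strict \'etale cohomologies are $(\P^n,\P^{n-1})$-invariant and log \'etale hypersheaves. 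The fiber of $1-F$ then inherits these properties.

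The main obstacle is verifying the completeness of the Nygaard filtration on $W\Omega$ at the level of strict pro-\'etale cohomology, which is what allows the inductive argument through the graded pieces to yield the claim for the full $\Fil^\rN_i W\Omega$ rather than merely its truncations. This should reduce to the formal observation, visible in the definition recalled in Construction \ref{p.7}, that $\Fil^\rN_i W\Omega$ becomes eventually trivial in each fixed chain-complex degree as $i\to\infty$, combined with good behavior of the Zariski cohomology functor with respect to the relevant inverse limits.
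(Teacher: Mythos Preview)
Your approach is correct but diverges from the paper's in both halves. For $L_{\sproet}W\Omega_{\log}^i$, the paper does \emph{not} go through the Nygaard fiber sequence at all: it first establishes the finite-level case $L_{\seta}W_m\Omega_{\log}^i$ and then simply invokes the second claim of Proposition~\ref{p.2}, namely $W\Omega_{\log}^i\simeq R\lim_m W_m\Omega_{\log}^i$, so that both desired properties pass to the limit. This sidesteps your ``main obstacle'' (Nygaard completeness at the level of pro-\'etale cohomology) entirely. For the finite-level case, the paper uses the $m=1$ sequence $0\to\Omega_{\log}^i\to\Omega^i\to\Omega^i/B\Omega^i\to 0$ from \cite[Proposition 2.13]{LORENZON2002247}, whose right-hand terms are already on the list in Proposition~\ref{p.3}, and then inducts on $m$ via \cite[(2.12)]{LORENZON2002247}. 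Your version instead needs $(\P^n,\P^{n-1})$-invariance of $W_m\Omega^i$ and $W_m\Omega^i/dV^{m-1}\Omega^{i-1}$ for all $m$, which are not on that list; the inductive step you allude to requires the Illusie structure theory of the graded pieces of the canonical/$V$-filtration on $W_m\Omega^i$ in terms of $B_j\Omega^i$, $Z_j\Omega^i$, and this is not spelled out. So the paper's route is shorter and stays entirely within the sheaves already handled by Proposition~\ref{p.3}, while yours is valid but needs the extra input about $W_m\Omega^i$ and the Nygaard completeness check.
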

\begin{proof}
Consider the exact sequence of strict \'etale sheaves
\[
0
\to
\Omega_{X/k,\log}^i
\to
\Omega_{X/k}^i
\to
\Omega_{X/k}^i/B\Omega_{X/k}^i
\to
0
\]
obtained by \cite[Proposition 2.13]{LORENZON2002247}.
By \cite[(2.12)]{LORENZON2002247}, Remark \ref{rmk:let=ket} and induction on $m$,
we see that $L_{\seta} W_m\Omega_{\log}^i$ is $(\P^n,\P^{n-1})$-invariant and a log \'etale hypersheaf.
Together with Proposition \ref{p.2},
we deduce that $L_{\sproet}W\Omega_{\log}^i$ is $(\P^n,\P^{n-1})$-invariant and a log \'etale hypersheaf. 
\end{proof}

\begin{const}
\label{p.6}
For $X\in \lSm/k$ and integer $i$,
we have a natural equivalence of complexes
\begin{equation}
\label{p.6.1}
R\Gamma_{\syn}(X,\Z_p(i))
\simeq
R\Gamma_{\sproet}(X,W\Omega_{X/k,\log}^i)
\end{equation}
by \cite[Theorem 4.30]{BLMP} and Proposition \ref{p.2}, and see \cite[Theorem 4.27]{BLMP} for the $\varphi$-compatibility. 
Together with \cite[Theorem 7.3.5]{BL22},
we see that $c_1^{\crys}\colon R\Gamma_{\et}(X,\G_m)[1]\to R\Gamma_{\crys}(X)[2]$ naturally factors through a morphism of complexes
\[
c_1^{W\Omega_{\log}}
\colon
R\Gamma_{\et}(X,\G_m)[1]
\to
R\Gamma_{\sproet}(X,W\Omega_{X/k,\log}^1)[1].
\]
As a consequence of \cite[Proposition 7.5.5]{BL22},
we see that the triangle
\begin{equation}
\label{p.6.2}
\begin{tikzcd}
&
R\Gamma_{\et}(\ul{X},\G_m)[1]\ar[ld,"c_1^{\syn}"']\ar[rd,"c_1^{W\Omega_{\log}}"]
\\
R\Gamma_{\syn}(X)[2]\ar[rr,"\simeq"]&
&
R\Gamma_{\sproet}(X,W\Omega_{X/k,\log}^1)[2]
\end{tikzcd}
\end{equation}
commutes.
\end{const}

\begin{const}
For $X\in \lSm/k$,
recall from \cite[Construction 7.3.1]{BL22} the morphism of complexes
\[
c_1^{\syn}
\colon
R\Gamma_{\et}(\ul{X},\G_m)[1]
\to
\Fil^\rN_1R\Gamma_{\crys}(\ul{X})[2].
\]
Compose this with the canonical morphism $\Fil^\rN_1R\Gamma_{\crys}(\ul{X})[2]\to \Fil^\rN_1R\Gamma_{\crys}(X)[2]$ to obtain
\[
c_1^{\crys}
\colon
R\Gamma_{\et}(\ul{X},\G_m)[1]
\to
\Fil^\rN_1R\Gamma_{\crys}(X)[2].
\]
\end{const}

\begin{df}
\label{p.4}
By Proposition \ref{p.3},
we can define the objects
\begin{gather*}
\Fil^\conj_i\bOmega
:=
(\tau_{\geq i}\Omega,(\tau_{\geq i+1}\Omega)[1],\ldots),
\\
\Fil^\rN_i\bW\bOmega
:=
(\Fil^\rN_iW\Omega,
(\Fil^\rN_{i+1}W\Omega)[1],
\ldots),
\\
L_{\sproet}\bW \bOmega_{\log}^i
:=
(L_{\sproet}W\Omega_{\log}^i,
L_{\sproet}W\Omega_{\log}^{i+1}[1],
\ldots)
\end{gather*}
in $\logSH(k)$,
where the bonding maps are given by the following composite morphisms:
\begin{gather*}
\tau_{\geq i} \Omega
\xrightarrow{c_1^\Hod}
\tau_{\geq i} (\Omega_{\P^1}\Omega[1])
\xrightarrow{\simeq}
\Omega_{\P^1}(\tau_{\geq i+1} \Omega)[1],
\\
\Fil^\rN_iW\Omega
\xrightarrow{c_1^\crys}
\Fil^\rN_i(\Omega_{\P^1}W\Omega[1])
\xrightarrow{\simeq}
\Omega_{\P^1}(\Fil^\rN_{i+1}W\Omega)[1],
\\
L_{\sproet}W\Omega_{\log}^i
\xrightarrow{c_1^{W\Omega_{\log}}}
L_{\sproet}\Omega_{\P^1}W\Omega_{\log}^{i+1}[1]
\xrightarrow{\simeq}
\Omega_{\P^1}L_{\sproet}W\Omega_{\log}^{i+1}[1].
\end{gather*}
\end{df}

\begin{prop}
\label{p.5}
For $i\in \Z$,
we have natural equivalences
\begin{gather*}
\gr^\rN_i \bPrism
\simeq
\Fil^\conj_i \bOmega,
\\
\Fil^\rN_i \bPrism
\simeq
\Fil^\rN_i\bW\bOmega,
\\
\bPrism
\simeq
\bW\bOmega,
\\
\bM \Z_p^{\syn}(i)
\simeq
L_{\sproet}\bW\bOmega_{\log}^i[-i]
\end{gather*}
in $\logSH(k)$.
\end{prop}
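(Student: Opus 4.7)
The plan is to establish each of the four equivalences levelwise first, and then verify that the bonding maps defining the $\P^1$-spectra on the two sides are compatible under those levelwise equivalences. For the levelwise identifications, I would use the comparison of filtered $\E_\infty$-rings $R\Gamma_{\cPrism}(X) \simeq R\Gamma_{\crys}(X)$ from \cite[Theorem 4.30]{BLMP} (Construction \ref{p.7}) to get $\Fil^\rN_i\cPrism\simeq \Fil^\rN_i W\Omega$ and $\cPrism\simeq W\Omega$; pass to the associated graded via \eqref{p.7.1} to get $\gr^\rN_i \cPrism\simeq \tau^{\leq i}\Omega$; and combine \eqref{p.6.1} with the fiber sequence of Proposition \ref{p.2} to identify $\bM\Z_p^{\syn}(i)$ levelwise with $L_{\sproet}W\Omega_{\log}^i[-i]$.

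The substantive content is then the compatibility of the bonding maps. By construction, the bonding maps on the left-hand sides are all built from the prismatic first Chern class $c_1^{\cPrism}(\cO(1))$ via the projective bundle formulas in Construction \ref{BMS.18}, while the bonding maps on the right-hand sides (Definition \ref{p.4}) are built from $c_1^{\Hod}$, $c_1^{\crys}$, and $c_1^{W\Omega_{\log}}$ respectively. Here I would invoke the commutative triangle \eqref{p.12.1} from Construction \ref{p.12} to identify $c_1^{\cPrism}$ with $c_1^{\crys}$ in Nygaard-filtered crystalline cohomology, which handles the equivalences $\Fil^\rN_i\bPrism\simeq \Fil^\rN_i\bW\bOmega$ and (after passing through the associated graded) $\bPrism\simeq \bW\bOmega$. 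For the graded piece, Proposition \ref{p.10} asserts exactly the commutativity of the square comparing the $c_1^\crys$- and $c_1^\Hod$-induced projective bundle formulas, which then upgrades the levelwise equivalence $\gr^\rN_i\cPrism\simeq \tau^{\leq i}\Omega$ to an equivalence of $\P^1$-spectra $\gr^\rN_i\bPrism\simeq \Fil^\conj_i\bOmega$. For the syntomic twist, the commutative triangle \eqref{p.6.2} from Construction \ref{p.6} identifies $c_1^{\syn}$ with $c_1^{W\Omega_{\log}}$, yielding the fourth equivalence $\bM\Z_p^{\syn}(i)\simeq L_{\sproet}\bW\bOmega_{\log}^i[-i]$.

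One subtle point is that the equivalences should be realized as morphisms of $\P^1$-spectra, not merely abstract equivalences at each level. Concretely, for each equivalence one should exhibit a morphism of sequences whose components are the levelwise equivalences above, and check commutativity of the bonding squares; the commutativity of these squares is precisely what Proposition \ref{p.10} and the triangles \eqref{p.12.1}, \eqref{p.6.2} provide. The $\P^n$-invariance of all presheaves involved (Propositions \ref{p.3} and \ref{p.11}) guarantees that the constructions in Definition \ref{p.4} indeed assemble to objects of $\logSH(k)$.

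The main obstacle I expect is bookkeeping with the Nygaard filtration and the Breuil--Kisin twists: the $\P^1$-spectrum $\Fil^\rN_i \bPrism$ is the sequence $(\Fil^\rN_i\cPrism,\Fil^\rN_{i+1}\cPrism\{1\}[2],\ldots)$, and one must track the Tate/Breuil--Kisin twist shifts carefully when matching these with $\Fil^\rN_i\bW\bOmega=(\Fil^\rN_i W\Omega,(\Fil^\rN_{i+1}W\Omega)[1],\ldots)$ under the bonding map induced by $c_1$. After absorbing the twist via the crystalline comparison (which trivializes the Breuil--Kisin twist in characteristic $p$, since $\cPrism\{1\}\simeq \cPrism$ after composing with the comparison isomorphism), the matching reduces to the Chern class comparisons above. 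Everything else is a routine consequence of passing to the limit/colimit in $\P^1$-spectra.
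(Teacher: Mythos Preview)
Your proposal is correct and follows essentially the same approach as the paper: establish the levelwise equivalences via \cite[Theorem 4.30, Proposition 5.1]{BLMP} and \eqref{p.6.1}, then verify compatibility of bonding maps using Proposition \ref{p.10} and the commutative triangles \eqref{p.12.1} and \eqref{p.6.2}. Your additional discussion of the Breuil--Kisin twist bookkeeping is a valid concern that the paper handles implicitly (the crystalline comparison in characteristic $p$ trivializes the twist), but otherwise the arguments coincide.
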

\begin{proof}
By \cite[Theorem 4.30, Proposition 5.1]{BLMP} and \eqref{p.6.1},
it suffices to show that the bonding maps on both sides are compatible.
This follows from Proposition \ref{p.10} and the commutativity of \eqref{p.12.1} and \eqref{p.6.2}.
\end{proof}

\begin{proof}[Proof of Theorem \ref{intro.1}]
(1) is a consequence of Lemma \ref{slice.2} and Theorems \ref{K.4} and \ref{BMS.3}.
(2) is a consequence of (1) and Propositions \ref{BMS.9} and \ref{p.5},
and (3) is a consequence of (2).
\end{proof}

\section{Very effective slices of Kummer \'etale K-theory}\label{sec:Kummer}

Let $k$ be a perfect field.
In this section,
we construct a morphism in $\logSH_\ket^{\wedge}(k)$ (the hypercomplete version of $\logSH_\ket(k)$) representing a morphism from the Lichtenbaum \'etale cohomology to the syntomic cohomology induced by the cyclotomic trace.
We also discuss the very effective slices of Kummer \'etale $K$-theory.

By Proposition \ref{p.11}, the functors
$L_{\sproet}W\Omega_{\log}^i$ are  log \'etale hypersheaves,
and represent $p$-adic log syntomic cohomology.
Hence we obtain the morphism $L_{\ket}\MZ\to \MZ_p^\syn$ in $\logSH_{\ket}^{\wedge}(k)$ by adjunction from the morphism $\MZ\to \MZ_p^\syn$ in $\logSH(k)$ induced by the motivic cyclotomic trace map.
We will show that $L_{\ket}\MZ$ represents the Lichtenbaum \'etale motivic cohomology.

For every integer $i$ and a commutative ring $R$,
let $R(i)$ be the Nisnevich sheaf of complexes on $\Sm/k$ given by the motivic cohomology
\[
X\mapsto R\Gamma_{\mot}(X,R(i)).
\]
Consider again the functor
\[
\omega^*\colon \Sh_{Nis}(\Sm/S,\Sp)\to \Sh_{sNis}(\SmlSm/S,\Sp)
\]
such that $\omega^*\cF(X):=\cF(X-\partial X)$ for $\cF\in \Sh_{Nis}(\Sm/k,\Sp)$ and $X\in \SmlSm/k$.

\begin{prop}
\label{ket.1}
The hypersheaf of complexes $L_{\ket}\omega^* \Z(i)$ on $\SmlSm/k$ is $(\P^n,\P^{n-1})$-invariant for every integer $n>0$ and $i$.
\end{prop}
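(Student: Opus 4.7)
The plan is to establish $(\P^n,\P^{n-1})$-invariance first at the presheaf level and then argue that it survives Kummer \'etale hypersheafification. At the presheaf level, $\omega^*\Z(i)$ is $(\P^n,\P^{n-1})$-invariant, since for any $X \in \SmlSm/k$ we have
\[
\omega^*\Z(i)(X \times (\P^n,\P^{n-1}))
= \Z(i)((X-\partial X) \times \A^n)
\simeq \Z(i)(X-\partial X)
= \omega^*\Z(i)(X)
\]
by $\A^1$-invariance of Voevodsky's motivic complex $\Z(i)$ on $\Sm/k$.

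Let $p_n \colon \SmlSm/k \to \SmlSm/k$ denote the functor $X \mapsto X \times (\P^n,\P^{n-1})$, and let $p_n^*$ denote the associated restriction on presheaves: $p_n^* F(X) := F(X \times (\P^n,\P^{n-1}))$. A first observation is that $p_n^*$ preserves Kummer \'etale hypersheaves, since Kummer \'etale morphisms are stable under base change: given a Kummer \'etale hypercover $U_\bullet \to X$, the base change $U_\bullet \times (\P^n,\P^{n-1}) \to X \times (\P^n,\P^{n-1})$ is again a Kummer \'etale hypercover, so descent on the latter gives descent on the former. Applying $L_\ket$ to the presheaf-level equivalence $\omega^*\Z(i) \simeq p_n^*\omega^*\Z(i)$ then yields $L_\ket\omega^*\Z(i) \simeq L_\ket p_n^* \omega^*\Z(i)$, so the problem reduces to identifying $L_\ket p_n^* \omega^*\Z(i)$ with $p_n^* L_\ket\omega^*\Z(i)$, i.e.\ showing that Kummer \'etale hypersheafification commutes with $p_n^*$ when applied to $\omega^*\Z(i)$.

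The main obstacle will be precisely this commutation. A Kummer \'etale cover of $X \times (\P^n,\P^{n-1})$ can involve root-type covers taking $e$-th roots of a local defining equation of $\P^{n-1}$; such covers do not arise by base change from Kummer \'etale covers of $X$, so one cannot naively refine Kummer \'etale hypercovers of $X \times (\P^n,\P^{n-1})$ by ones of the form $U_\bullet \times (\P^n,\P^{n-1})$. To circumvent this, I would exploit that $\omega^*\Z(i)$ depends only on the open complement of the log locus: any Kummer \'etale cover $V \to X \times (\P^n,\P^{n-1})$ restricts to an \'etale cover $V-\partial V \to (X-\partial X) \times \A^n$, and $\omega^*\Z(i)(V)$ depends only on this restriction. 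This should allow one to compute $L_\ket\omega^*\Z(i)(X \times (\P^n,\P^{n-1}))$ in terms of \'etale hypercohomology of $(X-\partial X) \times \A^n$ with coefficients in $\Z(i)$, and then conclude via $\A^1$-invariance of Lichtenbaum \'etale motivic cohomology on $\Sm/k$. The technical heart of the argument lies in making this comparison precise, likely requiring an explicit analysis of Kummer \'etale covers of $(\P^n,\P^{n-1})$ (root stacks along $\P^{n-1}$) together with the invocation of $\A^1$-invariance of $L_\et\Z(i)$.
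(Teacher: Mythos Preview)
Your final step relies on $\A^1$-invariance of Lichtenbaum \'etale motivic cohomology $L_{\et}\Z(i)$ on $\Sm/k$, and this is false when $\chara k = p > 0$. Already for $i=0$ one has $L_{\et}\Z/p(0)=\Z/p$, and Artin--Schreier theory gives $H^1_{\et}(\A^1_k,\Z/p)\neq 0$; more generally the $p$-torsion part of $L_{\et}\Z(i)$ is governed by $W_n\Omega^i_{\log}$, which is not $\A^1$-invariant. The paper itself records this failure (see the remark after Theorem \ref{thm:KummerLich}). So the reduction ``pass to the open complement, then use $\A^1$-invariance of $L_{\et}\Z(i)$'' cannot close.

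There is also a subtler gap one step earlier: you want $L_{\ket}\omega^*\Z(i)(Y)\simeq R\Gamma_{\et}(Y-\partial Y,\Z(i))$, i.e.\ that the presheaf $Y\mapsto R\Gamma_{\et}(Y-\partial Y,\Z(i))$ is already a Kummer \'etale hypersheaf on $\SmlSm/k$. The paper establishes exactly this for $\Q$-coefficients (via \'etale descent in $\mathrm{DM}^{\mathrm{eff}}(k,\Q)$), but for integral or $p$-torsion coefficients in characteristic $p$ it is not clear, and your sketch does not supply an argument.

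The paper's proof avoids both issues by the arithmetic fracture square, reducing to $\Q$-coefficients and to $\Z/\ell$-coefficients for each prime $\ell$. The rational and $\ell\neq p$ cases go through by standard means. For $\ell=p$ the key input is Geisser--Levine, which gives $\Z/p^n(i)\simeq W_n\Omega^i_{\log}[-i]$ as Nisnevich sheaves on $\Sm/k$; crucially $\omega^*W_n\Omega^i_{\log}$ is the \emph{logarithmic} Hodge--Witt sheaf $W_n\Omega^i_{\log}$ on $\SmlSm/k$ (not its value on the open complement), and this log object is $(\P^n,\P^{n-1})$-invariant by Proposition \ref{p.11}. In other words, $(\P^n,\P^{n-1})$-invariance in the $p$-part comes from genuinely log-geometric input, which your strategy of passing to $Y-\partial Y$ discards.
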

\begin{proof}
For every complex $\cF$,
we have the arithmetic fracture square
\[
\begin{tikzcd}
\cF\ar[d]\ar[r]&
\prod_\ell \cF_\ell^\wedge\ar[d]
\\
\cF_\Q\ar[r]&
\prod_\ell (\cF_\ell^\wedge)_\Q
\end{tikzcd}
\]
that is cartesian,
where $\ell$ runs over primes,
$(-)_\ell^\wedge$ is the $\ell$-adic completion, and $(-)_\Q$ is the rationalization.
Use this square for
\[
\cF
:=
\fib(L_{\ket}\omega^*\Z(i)(X)\to L_{\ket}\omega^*\Z(i)(X\times (\P^n,\P^{n-1})))
\]
to reduce to showing that the presheaves of complexes $L_{\ket}\omega^*\Q(i)$ and $(L_{\ket}\omega^*\Z(i))_{\ell}^\wedge$ are $(\P^n,\P^{n-1})$-invariant.

If $X_\bullet \to X$ is a Kummer \'etale hypercover,
then $X_\bullet-\partial X_\bullet \to X-\partial X$ is an \'etale hypercover.
By \cite[Theorem 14.30]{MVW},
we have an equivalence $\colim_{i\in \Delta} M(X_i-\partial X_i) \simeq M(X-\partial X)$ in the $\infty$-category of Voevodsky's motives $\mathrm{DM}^\mathrm{eff}(k,\Q)$.
Apply $\map_{\mathrm{DM}^\mathrm{eff}(k,\Q)}(-,\Q(i)[2i])$ to this equivalence to deduce that $X\in \SmlSm/k \mapsto R\Gamma_\mot(X-\partial X,\Q(i))$ is a Kummer \'etale hypersheaf.
It follows that
we have an equivalence of presheaves of complexes with rational coefficients
\(
L_{\ket}\omega^*\Q(i)
\simeq
L_{\sNis}\omega^*\Q(i).
\)
The latter is $(\P^n,\P^{n-1})$-invariant by \cite[Proposition 8.1.12]{BPO}.
Hence it remains to show that $(L_{\ket}\omega^*\Z(i))_{\ell}^\wedge$ is $(\P^n,\P^{n-1})$-invariant.
Since $(L_{\ket}\omega^*\Z(i))_{\ell}^\wedge\simeq \lim_d L_{\ket}\omega^*\Z/\ell^d(i)$,
it suffices to show that $L_{\ket}\omega^*\Z/\ell^d(i)$ is $(\P^n,\P^{n-1})$-invariant.
By induction and using the exact sequence of abelian groups \[0\to \Z/\ell \to \Z/\ell^n \to \Z/\ell^{n-1}\to 0,\]
we reduce to showing that $L_{\ket}\omega^*\Z/\ell(i)$ is $(\P^n,\P^{n-1})$-invariant.

Assume $\ell\neq p$.
We have an equivalence
\[
L_{\ket}\omega^*\Z/\ell(i)
\simeq
L_{\ket}\omega^*\mu_\ell^{\otimes i}
\]
by \cite[Theorem 10.2]{MVW} since the Kummer \'etale topology is finer than the strict \'etale topology.
Hence $L_{\ket}\omega^*\Z/\ell(i)(X)$ is equivalent to the Kummer \'etale cohomology $R\Gamma_{\ket}(X,\Z/\ell(i))$.
This is $\square$-invariant and a log \'etale hypersheaf by \cite[Theorem 9.1.5, Proposition 9.1.6]{BPO2},
so this is $(\P^n,\P^{n-1})$-invariant by \cite[\S 3.5]{BPO2}.

Assume $\ell=p$.
then we have an equivalence of Nisnevich sheaves $\Z/p^n(i)\simeq W_n\Omega_{\log}^i[-i]$ on $\Sm/k$ by Geisser-Levine \cite[Theorem 8.5]{MR1738056}.
Hence we have an equivalence of Kummer \'etale sheaves
\begin{equation}
\label{ket.1.1}
L_{\ket}\omega^*\Z/p^n(i)\simeq L_{\ket}W_n\Omega_{\log}^i[-i]
\end{equation}
on $\SmlSm/k$ since $\omega^*W_n\Omega_{\log}^i\simeq W_n\Omega_{\log}^i$ by the definition of logarithmic forms.
This is $(\P^n,\P^{n-1})$-invariant by Proposition \ref{p.11}.
\end{proof}

\begin{df}
\label{ket.2}
For $X\in \SmlSm/k$,
the \emph{Kummer \'etale motivic cohomology} $R\Gamma_\rL(X,\Z(i))$ is the Kummer \'etale hypersheafification of $X\mapsto R\Gamma_\mot(X,\Z(i)):=R\Gamma_\mot(X-\partial X,\Z(i))$,
i.e.,
\[
R\Gamma_\rL(X,\Z(i))
:=
(L_{\ket}\omega^* \Z(i))(X).
\]
For every integer $i$,
Proposition \ref{ket.1} yields a natural equivalence
\begin{equation}
\label{ket.2.1}
\map_{\logSH_{\ket}^\wedge(k)}(\Sigma^\infty X_+,\Sigma^{0,i}L_\ket \MZ)
\simeq
R\Gamma_{\mathrm{L}}(X,\Z(i)),
\end{equation}
where $\logSH_{\ket}^\wedge(k)$ is the  Kummer \'etale hyper-localization of $\logSH(k)$,
and $L_{\ket}\colon \logSH(k)\to \logSH_{\ket}^\wedge(k)$ is the localization functor. 
The \emph{Kummer \'etale $K$-theory} $L_{\ket}\Kth(X)$ is the Kummer \'etale hypersheafification of $X\mapsto \Kth(X):=\Kth(X-\partial X)$.

Assume that $X$ has a trivial log structure.
Then the small Kummer \'etale site $X_{\ket}$ and small \'etale site $X_{\et}$ agree.
Hence $R\Gamma_{\mathrm{L}}(X,\Z(i))$ agrees with the Lichtenbaum \'etale motivic cohomology, and $L_{\ket}\Kth(X)$ agrees with the \'etale $K$-theory of $X$.

We also consider the localization functor $L_\setale\colon \logSH(k)\to \logSH_\setale^\wedge(k)$.
\end{df}

\begin{df}
For a topology $\tau$ on $\SmlSm/k$,
let $\logSH_\tau(k)_p^\wedge$ be the full subcategory of $\logSH_\tau(k)$ spanned by $p$-complete objects,
and let $(\logSH_\tau(k)_p^\wedge)^\eff$ be the full stable $\infty$-subcategory of $\logSH_\tau(k)_p^\wedge$ generated under colimits by $(\Sigma^{n,0}\Sigma_{\P^1}^\infty X_+)_p^\wedge$ for $X\in \SmlSm/k$ and $n\in \Z$.
We define the slice filtration on $\logSH_\tau(k)_p^\wedge$ as we did on $\logSH_\tau(k)$.
\end{df}

\begin{prop}
\label{ket.4} 
For every integer $i$,
there are equivalences in $\logSH(k)_p^\wedge$
\[
(L_{\setale}\bM \Z(i))_p^\wedge
\simeq
(L_{\ket}\bM \Z(i))_p^\wedge
\simeq
\bM \Z_p^\syn(i).
\]
\end{prop}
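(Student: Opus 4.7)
The plan is to use Geisser--Levine to identify $\bM\Z(i)$ modulo $p^n$ with the log de Rham--Witt complex $\bW_n\bOmega_{\log}^i[-i]$ in $\logSH(k)$, and then invoke Proposition \ref{p.5}, which identifies $\bM\Z_p^\syn(i) \simeq L_\sproet \bW\bOmega_{\log}^i[-i]$, together with Proposition \ref{p.2}, which expresses the latter as a derived inverse limit $R\lim_n \bW_n\bOmega_{\log}^i[-i]$. With these ingredients, both $(L_\setale \bM\Z(i))_p^\wedge$ and $(L_\ket \bM\Z(i))_p^\wedge$ will be identified with the same derived inverse limit, and hence with $\bM\Z_p^\syn(i)$.

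For the mod $p^n$ step, I would follow the argument used in the proof of Proposition \ref{ket.1}: Geisser--Levine \cite[Theorem 8.5]{MR1738056} yields $\Z/p^n(i)|_{\Sm/k} \simeq W_n\Omega_{\log}^i[-i]$ in the Nisnevich topology, and applying $\omega^*$ together with the identity $\omega^* W_n\Omega_{\log}^i \simeq W_n\Omega_{\log}^i$ gives an equivalence $L_\tau \omega^* \Z/p^n(i) \simeq L_\tau W_n\Omega_{\log}^i[-i]$ on $\SmlSm/k$, for both $\tau = \setale$ and $\tau = \ket$. By Proposition \ref{p.11}, $W_n\Omega_{\log}^i$ is already a log \'etale hypersheaf, so its strict \'etale, Kummer \'etale, and strict pro-\'etale hypersheafifications all coincide with $W_n\Omega_{\log}^i$ itself.

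To assemble these into equivalences of $\P^1$-spectra, one must check that the bonding maps match up. On the motivic side, the bonding map for $\bM\Z(i)$ is controlled by the motivic first Chern class; on the de Rham--Witt side, it is given by $c_1^{W\Omega_{\log}}$ as in Definition \ref{p.4}. Their compatibility can be traced through the commutativity of \eqref{p.6.2}, together with the fact that the motivic Chern class is sent to the syntomic Chern class under the cyclotomic trace. Once this is established, taking $R\lim_n$ of the resulting equivalences $L_\tau \bM\Z/p^n(i) \simeq \bW_n\bOmega_{\log}^i[-i]$ of $\P^1$-spectra, and combining with Propositions \ref{p.2} and \ref{p.5}, yields the conclusion. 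The main technical obstacle is precisely this bonding map compatibility; once it is in place, the remainder is a routine manipulation of limits and hyperlocalization functors.
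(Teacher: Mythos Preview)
Your approach is essentially the same as the paper's: the paper also uses Geisser--Levine via \eqref{ket.1.1}, together with Propositions \ref{p.2} and \ref{p.11}, to obtain the equivalences; it simply states the first step as $\bM\Z_p^\syn(i)\simeq \lim_m(L_\ket \bW_m\bOmega_{\log}^i[-i])$ directly from Proposition \ref{p.2} rather than routing through Proposition \ref{p.5}. You are more explicit than the paper about the bonding-map compatibility needed to promote the levelwise identification \eqref{ket.1.1} to an equivalence of $\P^1$-spectra, which the paper leaves implicit; one minor remark is that since the Geisser--Levine identification does not itself pass through the cyclotomic trace, the most natural way to check bonding-map compatibility is likewise directly (via the $d\log$ description of the first Chern class on both sides) rather than via the trace.
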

\begin{proof}
We have an equivalence in $\logSH(k)$
\[
\bM \Z_p^\syn(i)
\simeq
\lim_m(L_\ket \bW_m \bOmega_{\log}^i[-i])
\]
by Proposition \ref{p.2}.
The right-hand side is equivalent to $(L_\ket\bM \Z(i))_p^\wedge$ by \eqref{ket.1.1}.
To show $(L_{\setale}\bM \Z(i))_p^\wedge\simeq (L_{\ket}\bM \Z(i))_p^\wedge$, use Proposition \ref{p.11} and \eqref{ket.1.1}.
\end{proof}

\begin{thm}
\label{ket.5}
Let $k$ be a perfect field admitting resolution of singularities.
Then the induced morphism
\[
\tilde{\fil}_i^\ket\bTC(-;\Z_p)
\to
\Fil_i^\BMS\bTC(-;\Z_p)
\]
is an equivalence in $\logSH_\ket^\wedge(k)_p^\wedge$ for every integer $i$. Here, we denote by $\tilde{\fil}_i^\ket$ the $i$th very effective cover in $\logSH_\ket^\wedge(k)_p^\wedge$.
We have a similar result for $\tilde{\fil}_i^\setale$ and $\logSH_{\setale}^\wedge(k)_p^\wedge$ too.
\end{thm}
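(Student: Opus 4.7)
The plan is to combine Theorem \ref{BMS.3} (which gives co-very-effectivity) with a new very-effectivity statement: I aim to show that $\Fil_i^\BMS \bTC(-;\Z_p)$ lies in $(\logSH_\ket^\wedge(k)_p^\wedge)_{\geq i}^\veff$. Together with the co-very-effectivity of the cofiber $\Fil^{i-1}_\BMS\bTC(-;\Z_p)$, this pins down $\Fil_i^\BMS \bTC(-;\Z_p)$ as the very effective cover of $\bTC(-;\Z_p)$ and forces the unique comparison map from Lemma \ref{slice.2} to be an equivalence.

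First, I would identify each graded piece. Propositions \ref{BMS.9}, \ref{p.5}, and \ref{ket.4} combine to give
\[
\gr^\BMS_j \bTC(-;\Z_p)
\simeq
\bM\Z_p^\syn(j)[2j]
\simeq
(L_\ket \MZ(j))_p^\wedge[2j]
\]
in $\logSH_\ket^\wedge(k)_p^\wedge$. By Theorem \ref{K.4}, which uses resolution of singularities and Geisser--Levine, we have $\MZ(j)[2j]\simeq \tilde{\slice}_j\KGL\in \logSH(k)_{\geq j}^\veff$. Since Kummer \'etale localization preserves $\geq j$ by Lemma \ref{slice.4}, and $p$-completion preserves very effective parts because $(\logSH_\ket^\wedge(k)_p^\wedge)^\eff$ is generated under colimits by $p$-completions of the generators of the effective subcategory, I would conclude that $\gr^\BMS_j \bTC(-;\Z_p)\in (\logSH_\ket^\wedge(k)_p^\wedge)_{\geq j}^\veff$ for every integer $j$.

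Next, I would promote this graded-piece statement to $\Fil_i^\BMS \bTC(-;\Z_p)$ itself using completeness of the BMS filtration. For each $j>i$, the quotient $\Fil_i^\BMS\bTC/\Fil_j^\BMS\bTC$ is an iterated extension of the $\gr^\BMS_k\bTC(-;\Z_p)$ for $i\leq k<j$; since each such graded piece lies in $\veff_{\geq i}$ and the very effective subcategory is closed under extensions, the finite quotient lies in $\veff_{\geq i}$. The functor $\tilde{\fil}_i^\ket$ is a right adjoint, hence preserves limits, so applying it to the completeness equivalence $\Fil_i^\BMS\bTC\simeq\lim_j \Fil_i^\BMS\bTC/\Fil_j^\BMS\bTC$ yields
\[
\tilde{\fil}_i^\ket \Fil_i^\BMS\bTC
\simeq
\lim_j \tilde{\fil}_i^\ket\bigl(\Fil_i^\BMS\bTC/\Fil_j^\BMS\bTC\bigr)
\simeq
\lim_j \Fil_i^\BMS\bTC/\Fil_j^\BMS\bTC
\simeq
\Fil_i^\BMS\bTC,
\]
and the counit being an equivalence gives $\Fil_i^\BMS\bTC(-;\Z_p)\in \veff_{\geq i}$.

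To conclude, the cofiber $\Fil^{i-1}_\BMS\bTC(-;\Z_p)$ lies in $\veff_{\leq i-1}$ by Theorem \ref{BMS.3} together with Lemma \ref{slice.4} and its $p$-complete analog, so applying $\tilde{\fil}_i^\ket$ to the cofiber sequence $\Fil_i^\BMS\bTC\to\bTC\to \Fil^{i-1}_\BMS\bTC$ produces $\Fil_i^\BMS\bTC\to\tilde{\fil}_i^\ket\bTC\to 0$ (using $\tilde{\fil}_i^\ket(\veff_{\leq i-1})=0$, which follows from $\Map(\veff_{\geq i},\veff_{\leq i-1})=0$), whence $\tilde{\fil}_i^\ket\bTC(-;\Z_p)\simeq \Fil_i^\BMS\bTC(-;\Z_p)$; this identification agrees with the comparison map of Lemma \ref{slice.2} by the uniqueness therein, and the strict \'etale case proceeds identically. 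I expect the main obstacle to be the careful bookkeeping required to pass between $\logSH(k)$, its Kummer \'etale hyperlocalization, and the $p$-complete subcategory, verifying that the relevant functors preserve the effective and co-effective subcategories; the key substantive input is the identification of graded pieces with $p$-completed Kummer \'etale motivic cohomology encoded in Proposition \ref{ket.4}, which ultimately rests on Geisser--Levine.
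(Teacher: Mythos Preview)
Your overall strategy coincides with the paper's: identify the graded pieces with $(L_\ket\MZ(j))_p^\wedge[2j]$ via Propositions \ref{BMS.9} and \ref{ket.4}, deduce $\gr_j^\BMS\bTC(-;\Z_p)\in(\logSH_\ket^\wedge(k)_p^\wedge)_{\geq j}^\veff$, promote this to $\Fil_i^\BMS\bTC(-;\Z_p)\in\veff_{\geq i}$, and combine with Theorem \ref{BMS.3} to conclude. The identification of graded pieces and the final deduction are handled correctly.

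There is, however, a genuine gap in your promotion step. You assert that ``$\tilde{\fil}_i^\ket$ is a right adjoint, hence preserves limits,'' and use this to pass the limit $\Fil_i^\BMS\bTC\simeq\lim_j(\Fil_i^\BMS\bTC/\Fil_j^\BMS\bTC)$ inside $\tilde{\fil}_i^\ket$. But $\tilde{\fil}_i^\ket$ is \emph{not} a right adjoint: by construction it is the composite $\iota_i\circ r_i$, where $r_i$ is right adjoint to the inclusion $\iota_i\colon\veff_{\geq i}\hookrightarrow\logSH_\ket^\wedge(k)_p^\wedge$. While $r_i$ preserves limits, the inclusion $\iota_i$ is a \emph{left} adjoint and need not preserve them; equivalently, $\veff_{\geq i}$ is closed under colimits and extensions but not under limits. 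Your displayed chain
\[
\tilde{\fil}_i^\ket(\lim_j Q_j)\;\simeq\;\lim_j\tilde{\fil}_i^\ket(Q_j)
\]
is precisely the unjustified step: the right-hand limit is computed in the ambient category, whereas $r_i$ only guarantees that the limit computed \emph{inside} $\veff_{\geq i}$ agrees with $r_i(\lim_j Q_j)$, which is tautological.

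The paper avoids this by organising the passage from graded pieces to $\Fil_i^\BMS\bTC$ as a \emph{colimit} rather than a limit. Since $\veff_{\geq i}$ is closed under colimits, once the finite-stage objects are shown to lie in $\veff_{\geq i}$ by induction, closure under colimits yields $\Fil_i^\BMS\bTC\in\veff_{\geq i}$ directly. If you wish to salvage the limit approach, you would need an additional argument exploiting the \emph{increasing} very-effective connectivity of the fibers $\gr_j^\BMS\bTC\in\veff_{\geq j}$ to show that the tower is pro-constant after any fixed very-effective truncation; this is plausible but is extra work you have not supplied.
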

\begin{proof}
We focus on the proof for the Kummer \'etale case since the proof for the strict \'etale case proceeds verbatim.
By Lemma \ref{slice.4} and Theorem \ref{K.4},
we have $L_{\ket}\MZ (i)[2i]\in \logSH_{\ket}^\wedge (k)_{\geq i}^\veff$.
Since the $p$-completion functor $\logSH_{\ket}^\wedge(k)\to \logSH_{\ket}^\wedge(k)_p^\wedge$ is a left adjoint,
we can show that this preserves (very) effectiveness arguing as in Lemma \ref{slice.4}.
Hence we have $\gr_i^\BMS \bTC(-;\Z_p)\in (\logSH_{\ket}^\wedge (k)_p^\wedge)_{\geq i}^\veff$ using Propositions \ref{BMS.9} and \ref{ket.4}.
For $j\geq i$,
we have
\[
\fib(\Fil_j^\BMS \bTC(-;\Z_p)\to \Fil_{i-1}^\BMS \bTC(-;\Z_p))\in (\logSH_{\ket}^\wedge (k)_p^\wedge)_{\geq i}^\veff
\]
by induction on $j$.
Take colimits on $j$ to have $\Fil_i^\BMS \bTC(-;\Z_p)\in (\logSH_{\ket}^\wedge (k)_p^\wedge)_{\geq i}^\veff$.
On the other hand,
Theorem \ref{BMS.3} implies $\Fil^{i-1}_\BMS \bTC(-;\Z_p)\in (\logSH_{\ket}^\wedge (k)_p^\wedge)_{\leq i-1}^\veff$.
It follows that we have $\widetilde{f}^{i-1}\Fil^{i-1}_\BMS \bTC(-;\Z_p)\simeq \Fil^{i-1}_\BMS \bTC(-;\Z_p)$ and hence $\widetilde{f}_i \Fil^{i-1}_\BMS \bTC(-;\Z_p)\simeq 0$.
Now after applying $\widetilde{f}_i$ to the fiber sequence
\[
\Fil_i^{\BMS}\bTC(-;\Z_p)
\to
\bTC(-;\Z_p)
\to
\Fil^{i-1}_{\BMS}\bTC(-;\Z_p),
\]
we obtain an equivalence $\Fil_i^{\BMS}\bTC(-;\Z_p)\simeq \widetilde{f}_i\bTC(-;\Z_p)$.
\end{proof}
\begin{rmk} (1) 
    In the proof of Theorem \ref{ket.5}, we have used in particular the fact that $(L_{\ket}\bM \Z(i))_p^\wedge$ is very effective in the category $\logSH^\wedge_{\ket}(k)^\wedge_p$. Since the inclusion $\logSH^\wedge_{\ket}(k)^\wedge_p \subset \logSH^\wedge_{\ket}(k)$ does not obviously preserve the very effective subcategory, we are not claiming that the same holds when we see $(L_{\ket}\bM \Z(i))_p^\wedge$ as object in $\logSH^\wedge_{\ket}(k)$.  

    (2) One could reasonably ask if the canonical morphism constructed in Theorem \ref{BMS.3} becomes an equivalence for $k$ a field (say, assuming resolution of singularities), after applying the canonical functor from $\logSH(k, \Z_p)$ to $\logSH^\wedge_{\ket}(k)^{\wedge}_p$, and if this equivalence coincides with the one provided by Theorem \ref{ket.5}. Equivalently, one could ask if the $p$-completion of the Kummer étale very effective slice filtration for $\TC(-;\Z_p)$ coincides with the Kummer étale very effective slice filtration computed in $\logSH^\wedge_{\ket}(k)^{\wedge}_p$. We expect this to be the case. 
\end{rmk}
\begin{thm}\label{thm:KummerLich}
Let $k$ be a perfect field admitting resolution of singularities,
and assume that the \'etale cohomological dimension of $k$ is finite.
\begin{enumerate}
\item[\textup{(1)}] The filtrations 
\[\fil_{\bullet}^\ket L_{\ket}\KGL, \quad \tilde{\fil}_{\bullet}^\ket L_{\ket}\KGL, \quad \text{ and } \quad L_{\ket}\tilde{\fil}_{\bullet}\KGL\] on $L_{\ket}\KGL\in \logSH_{\ket}^\wedge(k)$ agree and are complete and exhaustive.
\item[\textup{(2)}]
There are natural equivalences
\[
\slice_i^\ket L_{\ket}\KGL
\simeq
\tilde{\slice}_i^\ket L_{\ket}\KGL
\simeq
L_{\ket}\slice_i\KGL
\simeq
L_{\ket}\tilde{\slice}_i\KGL
\simeq
\Sigma^{2i,i}L_{\ket}\MZ
\]
in $\logSH_{\ket}^\wedge (k)$.
\item[\textup{(3)}]
For $X\in \SmlSm/k$,
there is a natural equivalence
\[
\map_{\logSH_{\ket}^\wedge(k)}(\Sigma^\infty X_+,L_{\ket}\KGL)
\simeq
L_{\ket}\Kth(X).
\]
\item[\textup{(4)}]
For $X\in \SmlSm/k$,
there are natural equivalences
\begin{gather*}
\map_{\logSH_{\ket}^\wedge(k,\Sph/p^n)}(\Sigma^\infty X_+,L_{\ket}\KGL\otimes_{\Sph} \Sph/p^n)
\simeq
L_{\ket}\Kth(X) \otimes_\Sph \Sph/p^n,
\\
\map_{\logSH_{\ket}^\wedge(k)_p^\wedge}(\Sigma^\infty X_+,(L_{\ket}\KGL)_p^\wedge)
\simeq
(L_{\ket}\Kth(X))_p^\wedge
\end{gather*}
without the assumption on the \'etale cohomological dimension.
\end{enumerate}
\end{thm}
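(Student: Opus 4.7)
The plan is to transport the slice filtration on $\KGL\in\logSH(k)$ through the localization $L_\ket$, identify the three candidate filtrations in (1) by a uniqueness argument, and deduce (2)--(4) essentially formally.

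Parts (3) and (4) are immediate from adjunction. The object $\KGL\in\logSH(k)$ represents the presheaf $X\mapsto \Kth(X-\partial X)$ via $\omega^*\KGL\simeq \KGL$ and \cite[Corollary 8.6.2]{BPO2}, while $\iota L_\ket$ is precisely the Kummer \'etale hypersheafification functor; hence
\[
\map_{\logSH_\ket^\wedge(k)}(\Sigma^\infty X_+,\, L_\ket\KGL) \simeq (\iota L_\ket\KGL)(X) \simeq L_\ket\Kth(X).
\]
The mod $p^n$ and $p$-adic variants in (4) follow from the same recipe applied in the appropriate base $\infty$-category, and require no additional hypothesis: the boundedness of $\Sph/p^n$ makes the relevant hypersheafifications formal.

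For (1) and (2), I would apply $L_\ket$ to the filtration $\fil_\bullet\KGL = \tilde\fil_\bullet\KGL$ from Theorem \ref{K.4}. By Lemma \ref{slice.4}, each $L_\ket\fil_i\KGL$ lies in $\logSH_\ket^\wedge(k)^\veff_{\geq i}$, and Theorem \ref{K.4} provides the candidate graded pieces $L_\ket\slice_i\KGL \simeq \Sigma^{2i,i}L_\ket\MZ$. The crucial step is to verify that the cofiber $L_\ket\fil^{i-1}\KGL$ lies in $\logSH_\ket^\wedge(k)^\veff_{\leq i-1}$; by closedness under extensions and Tate shifts, this reduces to checking that $L_\ket\MZ\in \logSH_\ket^\wedge(k)^\eff_{\leq 0}$, i.e., that
\[
\map_{\logSH_\ket^\wedge(k)}(\Sigma^{p,1}\Sigma^\infty X_+,\, L_\ket\MZ)\simeq 0 \quad\text{for all } p\in\Z, \ X\in\SmlSm/k.
\]
By Definition \ref{ket.2} and the equivalence \eqref{ket.2.1}, this mapping space is a degree shift of the Lichtenbaum motivic cohomology $R\Gamma_\rL(X,\Z(-1))$. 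Since Voevodsky's complex $\Z(n)$ vanishes for $n<0$ on smooth $k$-schemes, the presheaf $X\mapsto R\Gamma_\mot(X-\partial X,\Z(-1))$ on $\SmlSm/k$ is identically zero, so its Kummer \'etale hypersheafification also vanishes. Lemma \ref{slice.2} then yields natural equivalences $L_\ket\fil_i\KGL \simeq \fil^\ket_i L_\ket\KGL \simeq \tilde\fil^\ket_i L_\ket\KGL$, settling the agreement in (1) and the slice identifications in (2).

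The main obstacle is the completeness claim in (1), which is where the finite \'etale cohomological dimension hypothesis is essential. Probing $\lim_i \fil^\ket_i L_\ket\KGL$ against $\Sigma^\infty X_+$ for $X\in\SmlSm/k$, each $\pi_n\map(\Sigma^\infty X_+, \fil^\ket_i L_\ket\KGL)$ fits into a spectral sequence whose $E_2$-page collects degree shifts of Lichtenbaum motivic cohomology groups $H^m_\rL(X-\partial X,\Z(j))$ for $j\geq i$. Finite cohomological dimension of $k$, combined with finite Krull dimension of $X-\partial X$, bounds the range of non-vanishing $H^m_\rL(X-\partial X,\Z(j))$ contributing to a fixed $\pi_n$ uniformly in $j$: for the $\ell$-adic components $(\ell\neq p)$ this uses the identification $\Z/\ell^n(j)\simeq \mu_{\ell^n}^{\otimes j}$, while for the $p$-adic components one uses $\Z/p^n(j)\simeq W_n\Omega_{\log}^j[-j]$ from Geisser--Levine (as in the proof of Proposition \ref{ket.1}). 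Hence the tower stabilizes to zero, and the filtration is complete. This boundedness is automatic mod $p^n$ and $p$-adically, which is consistent with (4) not requiring the cohomological dimension hypothesis.
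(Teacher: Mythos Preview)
Your argument for (3) and (4) has a genuine gap. The claim that ``$\iota L_\ket$ is precisely the Kummer \'etale hypersheafification functor'' is not true on the nose: the category $\logSH_\ket^\wedge(k)$ is by definition the $\P^1$-stabilization of $(\P^\bullet,\P^{\bullet-1})$-local Kummer \'etale hypersheaves, so the localization $L_\ket\colon\logSH(k)\to\logSH_\ket^\wedge(k)$ involves, after levelwise hypersheafification, a further $(\P^\bullet,\P^{\bullet-1})$-localization. For a Bott-periodic spectrum like $\KGL$, the identification $\map_{\logSH_\ket^\wedge(k)}(\Sigma^\infty X_+,L_\ket\KGL)\simeq L_\ket\Kth(X)$ holds \emph{only if} the levelwise hypersheafification $L_\ket\Kth$ is already $(\P^n,\P^{n-1})$-invariant. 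This is precisely the content of (3), and it is not automatic: as the paper remarks, $L_\ket\KGL$ is not even $\A^1$-invariant. The paper establishes this invariance by using the complete exhaustive filtration $L_\ket\tilde\fil_\bullet\KGL$ with graded pieces $\Sigma^{2i,i}L_\ket\MZ$, each of which is $(\P^n,\P^{n-1})$-invariant by Proposition~\ref{ket.1}. In other words, (3) is proved \emph{via} (1) and (2), not before them.

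There is also a minor gap in your reduction for (1). You write that showing $L_\ket\fil^{i-1}\KGL\in\logSH_\ket^\wedge(k)^\veff_{\leq i-1}$ ``reduces by closedness under extensions'' to the single check $L_\ket\MZ\in\logSH_\ket^\wedge(k)^\eff_{\leq 0}$. But the coeffective subcategory is closed under limits, not colimits, and $\fil^{-1}\KGL$ is an \emph{infinite} colimit of extensions of the $\Sigma^{2j,j}\MZ$ for $j\leq -1$. The paper handles this by invoking compactness of $\Sigma^\infty X_+$ in $\logSH_\ket^\wedge(k)$ (which is where the finite cohomological dimension hypothesis enters) to pass the vanishing through the colimit. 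Your spectral sequence argument for completeness morally encodes the same bound, but the paper's route is cleaner: completeness of $\tilde\fil^\ket_\bullet$ is automatic because the connectivity of $\tilde\fil_n E$ increases with $n$ \cite[Proposition 5.11]{SO:twistedK}, and once the three filtrations are shown to agree, this settles completeness for all of them.
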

\begin{proof}
As pointed out by Spitzweck-{\O}stv{\ae}r, the connectivity of $\tilde{\fil}_n E$ of any motivic spectrum increases with $n$ (see \cite[Proposition 5.11]{SO:twistedK}). Hence the very effective slice filtration is automatically complete.

By Proposition \ref{ket.1},
we have
\[
\map_{\logSH_{\ket}^\wedge(k)}(\Sigma^\infty X_+,\Sigma^{0,i}L_{\ket}\MZ)\simeq 0
\]
for $X\in \SmlSm/k$ and $i<0$.
By \cite[Corollary 5.1.7]{BPO} and \cite[Proposition 2.7.3]{BPO2}, $\Sigma^\infty X_+$ is compact.
Since $L_{\ket}$ preserve colimits,
Proposition \ref{K.4} implies that the filtration $L_{\ket}\tilde{\fil}_\bullet\KGL$ is exhaustive.
It follows that we have
\[
\map_{\logSH_{\ket}^\wedge(k)}(\Sigma^\infty X_+,L_{\ket}\tilde{\fil}^{-1}\KGL)\simeq 0.
\]
This implies
$L_{\ket}\tilde{\fil}^{-1}\KGL\in \logSH_{\ket}^\wedge(k)_{\leq -1}^\eff$.
On the other hand,
Lemma \ref{slice.4} implies $L_{\ket}\tilde{\fil}_0\KGL \in \logSH_{\ket}^\wedge(k)_{\geq 0}^\veff$.
From these,
we have natural equivalences
\[
\fil_0 L_{\ket}\KGL
\simeq
\tilde{\fil}_0 L_{\ket}\KGL
\simeq
L_{\ket}\tilde{\fil}_0\KGL.
\]
Apply $\Sigma^{2i,i}$ to this and use Proposition \ref{K.4} to finish the proof of (1).

(2) is an immediate consequence of (1).

Consider the localization functor
\[
L_{\ket}
\colon
\Sp_{\P^1}(\Sh_{sNis}(\SmlSm/k,\Sp))
\to
\Sp_{\P^1}(\Sh_{\ket}(\SmlSm/k,\Sp))
\]
without the further $(\P^n,\P^{n-1})$-localizations.
To show (3),
we only need to show that $L_{\ket}\KGL$ is $(\P^n,\P^{n-1})$-invariant for every integer $n$.
The above argument shows that the filtration $L_{\ket}\tilde{f}_{\geq \bullet}\KGL$ on $L_{\ket}\KGL$ is complete and exhaustive since this argument does not require $(\P^n,\P^{n-1})$-localizations.
Hence it suffices to show that the graded pieces $L_{\ket}\tilde{\slice}_i \KGL$ are $(\P^n,\P^{n-1})$-invariant.
This is a consequence of Theorem \ref{K.4} and Proposition \ref{ket.1}.

Argue similarly as above and use $\lim_n$ for (4),
but note that the \'etale $\Z/p^n$-cohomological dimension (=\'etale $\Sph/p^n$-cohomological dimension) of $k$ is always finite by \cite[Th\'eor\`eme X.5.1]{SGA4}.
\end{proof}

\begin{rmk}
Even though $\KGL\in \logSH(k)$ is $\A^1$-invariant,
$L_{\ket}\KGL\in \logSH_{\ket}^\wedge(k)$ is not $\A^1$-invariant in the above case of $k$ with $\chara k>0$ since $R\Gamma_\rL$ is not $\A^1$-invariant as observed in \cite[Lecture 10]{MVW}.
\end{rmk}

\begin{const}
\label{ket.3}
Let $k$ be a perfect field admitting resolution of singularities.
By Proposition \ref{p.11} and \eqref{p.6.1},
log syntomic cohomology on $\SmlSm/k$ is a Kummer \'etale hypersheaf.
It follows that the log motivic cyclotomic trace map 
\[
\Tr\colon \KGL\to \bTC
\]
in $\logSH(k)$ yields the map
\[
L_{\ket}\Tr \colon L_{\ket}\KGL\to \bTC
\]
in $\logSH_{\ket}^\wedge(k)$.

If we apply $\map_{\logSH_\ket^\wedge(k)}(\Sigma^\infty X_+,-)$ to $L_{\ket}\Tr$,
then we get
\[
L_{\ket}\Kth(X)
\to
\TC(X)
\]
by Theorem \ref{thm:KummerLich}(3) when the \'etale cohomological dimension of $k$ is finite,
which is the Kummer \'etale hypersheafification of the log cyclotomic trace map $\Kth(X)\to \TC(X)$.
Similarly,
if we apply $\map_{\logSH_\ket^\wedge(k)_p^\wedge}(\Sigma^\infty X_+,-)$ to $(L_{\ket}\Tr)_p^\wedge$,
then we get
\[
(L_{\ket}\Kth(X))_p^\wedge
\to
\TC(X;\Z_p)
\]
by Theorem \ref{thm:KummerLich}(4).
\end{const}

As an application of Kummer \'etale $K$-theory,
we show the following:

\begin{prop}\label{prop:GeissHessEt}
Let $k$ be a perfect field admitting resolution of singularities.
For $X\in \SmlSm/k$,
there is a natural equivalence
\[
(L_{\ket}\Kth(X))^{\wedge}_p
\simeq
\TC(X;\Z_p).
\]
\end{prop}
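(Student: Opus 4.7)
The strategy is to promote the claim to the assertion that the $p$-completed Kummer \'etale trace morphism
\[
\alpha\colon (L_{\ket}\KGL)_p^\wedge \to \bTC(-;\Z_p)
\]
in $\logSH_{\ket}^\wedge(k)_p^\wedge$ from Construction~\ref{ket.3} is itself an equivalence. Once this is known, applying $\map_{\logSH_{\ket}^\wedge(k)_p^\wedge}(\Sigma^\infty X_+,-)$ and invoking Theorem~\ref{thm:KummerLich}(4) immediately yields the desired equivalence $(L_{\ket}\Kth(X))_p^\wedge\simeq \TC(X;\Z_p)$.

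To prove that $\alpha$ is an equivalence, I would compare the very effective slice filtrations on the two sides. The source $(L_{\ket}\KGL)_p^\wedge$ carries the complete and exhaustive filtration $\tilde{\fil}_\bullet^\ket$ by Theorem~\ref{thm:KummerLich}(1), whose $i$th graded piece is $\Sigma^{2i,i}(L_{\ket}\MZ)_p^\wedge\simeq \bM\Z_p^{\syn}(i)[2i]$ by Theorem~\ref{thm:KummerLich}(2) combined with Proposition~\ref{ket.4}. The target $\bTC(-;\Z_p)$ carries the BMS filtration, which by Theorem~\ref{ket.5} agrees with $\tilde{\fil}_\bullet^\ket$ in $\logSH_{\ket}^\wedge(k)_p^\wedge$, is complete and exhaustive, and has $i$th graded piece $\bM\Z_p^{\syn}(i)[2i]$ by Proposition~\ref{BMS.9}. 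By functoriality of $\tilde{\fil}_i^\ket$, the map $\alpha$ lifts to a morphism of filtered objects, so by completeness and exhaustiveness it suffices to show that each $\tilde{\slice}_i\alpha\colon \bM\Z_p^{\syn}(i)[2i]\to \bM\Z_p^{\syn}(i)[2i]$ is an equivalence.

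This last point is the main obstacle. Both filtrations shift compatibly under $\Sigma^{2,1}$: the source by Bott periodicity of $\KGL$, and the target by Proposition~\ref{BMS.5}. Since $\alpha$ is a morphism of $\P^1$-spectra, it commutes with $\Sigma^{2,1}$, so one reduces to the case $i=0$. Here $\tilde{\slice}_0 \alpha$ is the $p$-completion of the Kummer \'etale sheafification of the motivic trace $\MZ\to \bM\Z_p^{\syn}$ on zeroth slices. Tracing through the construction of the cyclotomic trace from \cite[\S 8]{BPO2} and using the comparison~\eqref{p.6.1} together with Propositions~\ref{ket.4} and~\ref{p.5}, this map identifies after the equivalences above with the canonical self-equivalence of $\bM\Z_p^{\syn}$. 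The essential input at this step is Geisser--Levine~\cite{MR1738056}, already invoked to establish Proposition~\ref{ket.4}.
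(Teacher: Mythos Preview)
Your approach is genuinely different from the paper's and more ambitious: you attempt to show that the trace morphism $\alpha\colon (L_{\ket}\KGL)_p^\wedge \to \bTC(-;\Z_p)$ is an equivalence in $\logSH_{\ket}^\wedge(k)_p^\wedge$ by comparing slice filtrations, whereas the paper argues pointwise. The paper first invokes the theorem of Geisser--Hesselholt \cite[Theorem 4.2.6]{GeissHess1}, which gives the equivalence directly when $X\in \Sm/k$ has trivial log structure, and then runs an induction on the number of irreducible components of $\partial X$ using the Gysin fiber sequence in $\logSH(k)$ and the compatibility of the trace with these sequences.

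The gap in your argument is the last step. You assert that $\tilde{\slice}_0\alpha$, after the identifications of Propositions~\ref{ket.4}, \ref{BMS.9}, and~\ref{p.5}, becomes ``the canonical self-equivalence of $\bM\Z_p^{\syn}$'', but you give no argument for this. The identification of the source via Proposition~\ref{ket.4} passes through Geisser--Levine, while the identification of the target passes through the prismatic--crystalline comparison; there is no a priori reason the slice of the cyclotomic trace should intertwine these two identifications by the identity. The paper itself only produces a map $\MZ(i)[2i]\to L_{\sproet}\bW\bOmega_{\log}^i[i]$ in Theorem~\ref{intro.1}(2) without determining it, and in the introduction explicitly declines to verify a closely related compatibility (``one would have to explicitly check the shapes of the cyclotomic trace and the resulting map after applying the slice filtration. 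We leave this problem to the interested reader''). So the ingredient you need is not available from the surrounding results. By contrast, the paper's route sidesteps this issue entirely: Geisser--Hesselholt already packages the required identification of the trace for smooth schemes, and the Gysin induction propagates it to all of $\SmlSm/k$ without ever having to name the map on graded pieces.
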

\begin{proof}
Assume first that $X\in \Sm/k$. Then the equivalence $(L_{\et}\Kth(X))^{\wedge}_p \simeq \TC(X;\Z_p)$ is due to Geisser and Hesselholt \cite[Theorem 4.2.6]{GeissHess1}.
In general, the desired equivalence for $X\in \SmlSm/k$ follows from the Gysin sequence \cite[Theorem  3.2.21]{BPO2} by induction on the number of smooth irreducible components $r$ of $\partial X$. More precisely, if $Z$ is a smooth divisor on proper $X\in \Sm/k$, then we have the fiber sequence in $\logSH(k)$
\begin{equation}\label{eq:loc}\Sigma^\infty ( X , Z)_+  \to \Sigma^\infty X_+ \to \mathrm{Th}(\rN_Z X), \end{equation}
where $\mathrm{Th}(\rN_Z X)$ is the motivic Thom space defined in \cite[Definition 3.2.7]{BPO2}.  Note that by applying the natural functor $\omega_\sharp \colon \logSH(k)\to \SH(k)$, left adjoint to $\omega^*$, the above sequence reduces to the standard localization sequence in $\SH(k)$. In particular, applying the log $K$-theory spectrum $\KGL$, we obtain the localization sequence in algebraic $K$-theory in light of \cite[Theorem 6.5.7]{BPO2}. 
On the other hand, if we apply the log TC spectrum $\mathbf{TC}$ to \eqref{eq:loc} we obtain by definition the Gysin or residue sequence in logarithmic topological cyclic homology.
By Construction \ref{ket.3},
we obtain a commutative diagram of spectra
\[
\begin{tikzcd}
  (L_{\ket}\Kth(Z))^\wedge_p \arrow[r]\arrow[d] & (L_{\ket}\Kth(X))^\wedge_p  \arrow[r]\arrow[d]   & (L_{\ket}\Kth(X,Z))^\wedge_p \arrow[d]\\
\TC(Z;\Z_p) \arrow[r]& \TC(X;\Z_p) \arrow[r] & \TC((X,Z);\Z_p)
\end{tikzcd}
\]
whose horizontal sequences are fiber sequences.
The general case follows similarly by induction on $r$.
\end{proof}

\begin{rmk}\label{rmk:Licht_mot}
Assume that the boundary $\partial X$ of $X\in \SmlSm/k$ has $r$ irreducible components.
For integers $m\geq 1$ and $n\leq d-r$,
the induced natural map
\[
\pi_n(R\Gamma_\mot(X,\Z/p^m(d)))
\to
\pi_n(R\Gamma_\rL(X,\Z/p^m(d)))
\]
is an isomorphism.
Indeed, if $r=0$,
then this is a consequence of \cite[Theorem 8.5]{MR1738056}.
If $r>0$,
then proceed by induction on $r$,
and use the Gysin sequence \cite[Theorem  3.2.21]{BPO2} and the five lemma.
See \cite[Corollary 1.1]{zbMATH07020394} for a related result with $\Q_p$-coefficients and the inequality $n\leq d$.
\end{rmk}

\begin{rmk}
   The fact that $p$-adic \'etale $K$-theory is identified with topological cyclic homology holds in larger generality (namely, without smoothness assumption) in light of \cite[Theorem C]{CMM}. However, this does not immediately imply a generalization of Proposition \ref{prop:GeissHessEt}. 
\end{rmk}

\bibliography{references}
\bibliographystyle{siam}

\end{document}